\newtheorem{theorem}{Theorem}[section]
 \newtheorem{corollary}[theorem]{Corollary}
 \newtheorem{lemma}[theorem]{Lemma}
 \newtheorem{proposition}[theorem]{Proposition}
 \theoremstyle{definition}
 \theoremstyle{remark}
 \newtheorem{remark}[theorem]{Remark}
 \numberwithin{equation}{section}
\def \bC {\mathbb C}
\def \bH {\mathbb H}
\def \bN {\mathbb N}
\def \bR {\mathbb R}
\def \bT {\mathbb T}
\def \bZ {\mathbb Z}
\def \cD {\mathcal D}
\def \cL {\mathcal L}
\def \cR {\mathcal R}
\def \cS {\mathcal S}
\def \fg {\mathfrak g}
\def \fU {\mathfrak U}
\def \sL{\mathscr L}
\def \tr {{\rm Tr}}
\def \id {\text{\rm I}}
\def \eps {\varepsilon}
\def \vol {{\rm vol}}
\begin{document}
\author[V. Fischer]{V\'eronique Fischer}
\address[V. Fischer]%
{University of Bath, Department of Mathematical Sciences, Bath, BA2 7AY, UK} 
\email{v.c.m.fischer@bath.ac.uk}

\title{Asymptotics and zeta functions on compact nilmanifolds}

\medskip


\subjclass[2010]{58J50, 58J35, 35K08, 35P05, 53C17, 43A85}

\keywords{Hypoelliptic operators, Harmonic analysis on homogeneous spaces, Global analysis and spectral problems, Heat kernels, Spectral multipliers. }

\begin{abstract}
In this paper, we obtain asymptotic formulae  on nilmanifolds $\Gamma \backslash G$, 
where 
 $G$ is any stratified (or even graded) nilpotent Lie group equipped with a co-compact discrete subgroup $\Gamma$. 
 We study especially the asymptotics related to the sub-Laplacians naturally coming from the stratified structure of the group $G$
(and more generally any positive Rockland operators when $G$ is graded).
We show that the short-time asymptotic on the diagonal of the kernels of spectral multipliers contains only a single non-trivial term. 
We also study the associated  zeta functions. 
\end{abstract}

\maketitle

\makeatletter
\renewcommand\l@subsection{\@tocline{2}{0pt}{3pc}{5pc}{}}
\makeatother

\tableofcontents

\section{Introduction}

For more than a century, the global theory of elliptic operators such as Riemannian Laplacians and Dirac operators has attracted interests from many branches of mathematics, especially spectral analysis on manifolds, but also in theoretical physics. 
From the  1970's \cite{folland+stein_74,roth+stein}, hypoelliptic operators and in particular sub-Laplacians have been intensively studied as well, 
with a particular emphasis on analysis on sub-Riemannian manifolds, predominantly CR and  contact manifolds (see e.g.  \cite{Beals+Greiner,PongeAMS2008} and the references therein). 

In this paper, we determine the complete and explicit short-time  expansion for the spectral multipliers of a  large class of hypoelliptic operators, together with properties of their spectral zeta functions. The hypoelliptic operators we consider include the intrinsic sub-Laplacians 
on any compact stratified nilmanifold $M$, 
i.e. the quotient  $M=\Gamma \backslash G$ of a 
stratified nilpotent Lie group (also called Carnot groups  in sub-Riemannian geometry) by a discrete co-compact subgroup $\Gamma$. By intrinsic, we mean that the sub-Laplacians  we consider on $M$ are the sum of squares of the left-invariant vector fields associated with a choice of basis of the first stratum  for $G$. Previous results on the specific setting of nilmanifolds include  the case of the Heisenberg groups and more generally for  step-two nilpotent Lie groups (see e.g. \cite{Bauer+,Bauer+21} and references therein). 
Nilmanifolds  can be seen in harmonic analysis on nilpotent Lie groups as analogues of `flat and commutative' tori  
and from a geometric viewpoint as a tower of iterated torus bundles, leading to interesting descriptions for instance in complex geometry for Heisenberg nilmanifolds 
  \cite{Folland2004}.

Our results hold 
 in an even more general setting: for any positive Rockland operator $\cR$ on any graded Lie group $G$ equipped with  any discrete co-compact subgroup $\Gamma$.  With graded underlying groups, nilmanifolds  form an interesting class of filtered manifolds: they can give examples of CR and contact manifolds when the underlying group is the Heisenberg group, but they can also produce examples of filtered manifolds outside of the   equiregular sub-Riemannian realm.  
This paper is set in this setting  not for generality's sake but in the light of very recent results on filtered manifolds. Indeed, in the past decade,  considerable progress has been achieved in the study  of spectral properties of  hypoellliptic operators  not only for sub-Laplacians on  sub-Riemannian manifolds but more generally (in the equiregular case) for Rockland operators  on filtered manifolds. This is mainly due  to the development of groupoid techniques (see e.g. \cite{vanErp,choi+ponge,vanErp+Y}). These techniques were used  recently by Dave and Haller  \cite{Dave+Haller}
 to obtain for any Rockland operator on any compact filtered manifold  short-time
heat kernel expansions with consequences regarding  zeta functions  and  index theory. 
From those expansions, together with the local nature of the heat  asymptotics and the homogeneity properties in the group setting, it may be expected that the heat expansion  on nilmanifolds has only one term. 
However, to the author's knowledge, this has not been proved. The main contribution of this paper is to show this property with a clear and simple proof on  nilmanifolds, and moreover for any $\cR$-spectral multiplier - not only for the heat operator $e^{-t\cR}$. We believe that our paper  brings a new understanding of previous results such as in the works of Bauer, Furutani and  Iwasaki \cite{Bauer+,Bauer+21} in the step-two case, and their links with the recent analysis on filtered manifolds \cite{vanErp+Y,Dave+Haller}

The main results of this paper can be summarised in the following statement:
 
\begin{theorem}
\label{thm_main}
Let $\Gamma$ be a discrete co-compact subgroup of 
a graded nilpotent Lie group $G$. 
The resulting compact nilmanifold is denoted by  $M=\Gamma \backslash G$. 
Let $\cR$ be a positive, differential and homogeneous Rockland operator on $G$.
We denote by $\cR_M$ the corresponding operator on $M$. We keep the same notation for the self-adjoint extensions of $\cR$ on $L^2(G)$ and $\cR_M$ on $L^2(M)$. 

\begin{description}
\item[(i) Short-time asymptotics]
Let $\psi\in \cS(\bR)$.
The convolution kernel $\kappa=\psi(\cR)\delta_0$ of $\psi(\cR)$ is Schwartz on $G$. 
For each $t>0$, 
the operator $\psi(t\cR_M)$ is trace-class and its integral kernel $K_t$ is smooth on $M\times M$. 
These kernels have the following asymptotics on the diagonal as $t\to 0$:
$$
K_t(\dot x,\dot x)= \kappa (0) \, t^{-Q/\nu}+O(t)^\infty, \quad \dot x\in M,
$$
where $Q$ and $\nu$ denote the homogeneous dimension of $G$ and the homogeneous degree of $\cR$ respectively. 
The traces of the operators $\psi(t\cR_M)$ have the following asymptotics as $t\to 0$ 
	$$
\tr \left(\psi(t\cR_M)\right) =  
\vol (M) \kappa(0) \, t^{-Q/\nu}+O(t)^\infty, 
$$
where $\vol (M)$ denotes the volume of $M$.

Furthermore, 
$$
\kappa(0) = c_0\int_0^\infty \psi (\lambda)  \lambda^{\frac Q \nu} \frac{d\lambda}{\lambda},
$$
where $c_0$ is a constant of $G$ and $\cR$ only.
Consequently, $c_0 =p_1(0) / \Gamma(Q/\nu)$ where $p_1(0)= e^{-\cR}\delta_0(0)$ is the heat kernel at time 1 and at $x=0$. 
 \\
 
\item [(ii) Spectral zeta function] 
We denote by $0=\lambda_0<\lambda_1 \leq \lambda_2 \leq \ldots$ 
 the eigenvalues of $\cR_M$ counted with multiplicity, and by 
$\zeta_{\cR_M}(s) =\sum_{j=1}^\infty \lambda_j^{-s}$
the spectral zeta function of $\cR_M$. 

The function 
$\zeta_{\cR_M}$ is holomorphic on $\{\Re s>Q/\nu\}$ and 
 admits a meromorphic extension to $\bC$ with only one pole.
 This pole is simple, located  at $s=Q/\nu$ and with residue  $ c_0 \vol (M)  $. 
 Furthermore, we know the following values:
$$
\zeta_{\cR_M}(0)=-1
\quad\mbox{and}\quad
\zeta_{\cR_M}(s)=0 \ \mbox{for}\ s=-1,-2,\ldots
$$
\end{description}
\end{theorem}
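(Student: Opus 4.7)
For the short-time asymptotic in part (i), the plan is to lift the analysis to $G$ by periodisation and then exploit the homogeneity of $\cR$ together with the Schwartz decay of $\kappa$. Writing $\kappa_t$ for the convolution kernel of $\psi(t\cR)$ on $G$ (which is Schwartz by known properties of positive Rockland operators on graded groups), one has $K_t(\dot x,\dot y) = \sum_{\gamma\in\Gamma}\kappa_t(y^{-1}\gamma x)$. The homogeneity of $\cR$ (degree $\nu$) yields the scaling identity $\kappa_t(y) = t^{-Q/\nu}\kappa(D_{t^{-1/\nu}}y)$, so on the diagonal
$$K_t(\dot x,\dot x) = t^{-Q/\nu}\kappa(0) + t^{-Q/\nu}\sum_{\gamma\in\Gamma\setminus\{e\}}\kappa\bigl(D_{t^{-1/\nu}}(x^{-1}\gamma x)\bigr).$$
The main task is to bound the tail uniformly by $O(t^\infty)$ for $x$ in a fundamental domain $F$. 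Fixing a homogeneous quasi-norm $|\cdot|$ on $G$, the quasi-triangle inequality and the boundedness of $F$ give an upper bound $|x^{-1}\gamma x|\leq C(1+|\gamma|)$; a short compactness argument for the finitely many lattice elements of small norm, combined with the discreteness of $\Gamma$, gives a matching lower bound $|x^{-1}\gamma x|\geq c(1+|\gamma|)$ for $x\in F$ and $\gamma\in\Gamma\setminus\{e\}$. Combining Schwartz decay of $\kappa$ with the polynomial growth of $\Gamma$ (so that $\sum_{\gamma\neq e}(1+|\gamma|)^{-N}<\infty$ for $N$ large) then bounds the tail by any desired power of $t$. Integrating over $M$ yields the trace expansion.

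For the formula for $\kappa(0)$ I would argue by scaling. The map $\psi\mapsto\psi(\cR)\delta_0(0)$ is a continuous linear functional on $\cS(\bR)$ which, by homogeneity of $\cR$, transforms under $\psi\mapsto\psi(r^\nu\cdot)$ by the factor $r^{-Q}$. Any such positively homogeneous continuous functional on $\cS(\bR)$ has the integral representation $\kappa(0) = c_0\int_0^\infty\psi(\lambda)\lambda^{Q/\nu}\,d\lambda/\lambda$ for some constant $c_0 = c_0(G,\cR)$. Specialising to $\psi(\lambda)=e^{-\lambda}$ (so $\kappa = p_1$) identifies $c_0 = p_1(0)/\Gamma(Q/\nu)$.

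For part (ii), the convergence of $\zeta_{\cR_M}$ on $\{\Re s>Q/\nu\}$ follows from (i) via Karamata's Tauberian theorem applied to the heat trace. For the meromorphic continuation I would use the Mellin representation
$$\Gamma(s)\,\zeta_{\cR_M}(s) = \int_0^\infty t^{s-1}\bigl(\tr(e^{-t\cR_M})-1\bigr)\,dt,$$
split at $t=1$. The integral over $[1,\infty)$ is entire by exponential decay (since $\lambda_1>0$). For the integral over $(0,1]$, insert the asymptotic from (i) with $\psi(\lambda)=e^{-\lambda}$, namely $\tr(e^{-t\cR_M})-1 = \vol(M)p_1(0)\,t^{-Q/\nu}-1+O(t^\infty)$; this produces $\vol(M)p_1(0)/(s-Q/\nu) - 1/s$ plus an entire remainder. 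Dividing by $\Gamma(s)$, the simple pole at $s=Q/\nu$ has residue $\vol(M)p_1(0)/\Gamma(Q/\nu) = c_0\vol(M)$; the pole at $s=0$ is cancelled by the pole of $\Gamma(s)$ there, leaving $\zeta_{\cR_M}(0) = -1$; and the simple zeros of $1/\Gamma(s)$ at negative integers force $\zeta_{\cR_M}(-n) = 0$ for $n\geq 1$.

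The main obstacle I anticipate is the uniform tail estimate in the periodisation of part (i): one must simultaneously control the polynomial growth of $|x^{-1}\gamma x|$ in $|\gamma|$ and prevent these conjugates from accumulating near the identity as $x$ varies over the fundamental domain. Once this uniform two-sided bound is in place, the homogeneity of $\cR$, the Schwartz property of $\kappa$, and the classical Mellin-transform machinery do the rest.
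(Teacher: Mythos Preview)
Your proposal is correct, and for the core asymptotic in part~(i) and the Mellin-transform continuation in part~(ii) it follows essentially the same path as the paper: periodisation, isolation of the $\gamma=e$ term, a compactness argument for the finitely many small $\gamma\neq e$ together with the (quasi-)triangle inequality for large $\gamma$ to control the tail, and the split of $\int_0^\infty t^{s-1}(\theta(t)-1)\,dt$ at $t=1$.

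Two points differ from the paper and are worth noting. For the formula $\kappa(0)=c_0\int_0^\infty\psi(\lambda)\lambda^{Q/\nu-1}\,d\lambda$, you argue directly that $\psi\mapsto\psi(\cR)\delta_0(0)$ is a tempered distribution on $\bR$ supported in $[0,\infty)$ and homogeneous of degree $Q/\nu-1>-1$, hence a multiple of $\lambda_+^{Q/\nu-1}$. The paper instead proves Christ's $L^2$ identity $\|\psi(\cR)\delta_0\|_{L^2(G)}^2=c_0\int_0^\infty|\psi(\lambda)|^2\lambda^{Q/\nu-1}\,d\lambda$ by identifying a homogeneous \emph{measure} on $(0,\infty)$, and then bilinearises with an approximate identity. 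Your route is shorter; the paper's gives $c_0>0$ for free and the $L^2$ isometry as a byproduct.

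More strikingly, your derivation of $\zeta_{\cR_M}(0)=-1$ and $\zeta_{\cR_M}(-n)=0$ is considerably simpler than the paper's. You read these values directly off the decomposition
\[
\Gamma(s)\,\zeta_{\cR_M}(s)=\frac{\vol(M)p_1(0)}{s-Q/\nu}-\frac{1}{s}+I(s),\qquad I\ \text{entire},
\]
since after dividing by $\Gamma(s)$ every term except $-1/(s\Gamma(s))=-1/\Gamma(s+1)$ vanishes at nonpositive integers, and this last term equals $-1$ at $s=0$ and $0$ at $s=-1,-2,\ldots$. The paper takes a more elaborate detour: it forms the product group $G\times\bR$ with the operator $\cR+\Delta_\bR$, uses Poisson summation on the torus factor to relate $\zeta_{\cR_M+\Delta_\bT}$ to $\zeta_{\cR_M}$ and the Riemann zeta function, and then extracts the values by comparing poles. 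Your direct computation bypasses this entirely.
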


To the author's knowledge,
the values of the zeta function in Theorem \ref{thm_main}  are entirely new for sub-Laplacians in step greater than three, and for Rockland operators on nilmanifolds. 
{It is interesting that this class of zeta functions behaves  like the spectral zeta function of the Laplacian on the flat torus for the properties described in Theorem \ref{thm_main}, i.e. having a simple pole and the `trivial' zeros at negative integers.}

Applying Theorem \ref{thm_main} to $\psi(\lambda)=e^{-\lambda}$, $\lambda\geq 0$, we obtain the following short-time asymptotic for the heat kernels
 $p_t=e^{- t\cR}\delta_0$ and $K_t$  of $\cR$ and $\cR_M$  as $t \to 0$:
 $$
 \forall \dot x\in M \quad 
K_t(\dot x,\dot x)= p_1(0) \, t^{-Q/\nu}+O(t)^\infty,  
\qquad\mbox{and}\qquad 
\tr \left(e^{-t\cR_M}\right) =  
\vol (M) p_1(0) \, t^{-Q/\nu}+O(t)^\infty. 
$$
The Weyl law is classically deduced from the latter:
the spectral counting function of $\cR_M$, 
$$
N(\Lambda):=\left|\left\{ j\in\bN_0,\;\;\lambda_j\leq \Lambda\right\}\right|,
$$
admits the asymptotic (see Theorem \ref{thm_Weyl})
$$
N(\Lambda) \sim \frac{\vol (M) p_1(0)}{\Gamma(1+Q/\nu)} \Lambda^{Q/2}, \qquad \mbox{as} \ \Lambda\to +\infty.
$$

As mentioned above, 
before  \cite{Dave+Haller}, 
most of these results regarding the heat asymptotics were known only for the intrinsic sub-Laplacians on step-two nilpotent Lie groups from the works of Bauer, Furutani and Iwasaki (see \cite{Bauer+,Bauer+21} and references therein). These works use the very explicit descriptions of the spectra of the sub-Laplacians involved \cite{Folland2004,thangavelu2009}
and known formulae for the heat kernel $p_t$ \cite{gaveau77,Beals+Gaveau+Greiner}. 
Here, we show that these properties hold not only for the heat kernels of sub-Laplacians on step-two nilmanifolds, but more generally for spectral multipliers in any Rockland operators on any graded nilmanifolds. 
Already in the step-two case, Bauer, Furutani and Iwasaki
noticed  \cite{Bauer+} that the constants in their results did not depend directly on the compact subgroup or the sub-Laplacian. Our proof explains in greater generality how and why the constants depend only on the volume of $M$, the homogeneous dimension $Q$ of $G$ and on the operator via its  heat kernel $p_1(0)$ at $x=0$ or equivalently the constant $c_0$ of Theorem \ref{thm_main} (see also Theorem \ref{thm_christ}). 

At its heart, our proof relies on the same analytic tools as the groupoid techniques: the analysis of kernels and notions of dilations that started fifty years ago, see e.g. \cite{folland+stein_82}.
However, our presentation is more direct and our results are more precise and general than in \cite{Dave+Haller} since our setting is restricted to nilmanifolds. 
To obtain our more general result on $\cR$-spectral multipliers, 
we will make use of famous results that hold in the group setting:
for instance,
the convolution kernel $\kappa=\psi(\cR)\delta_0$ of $\psi(\cR)$ being Schwartz on $G$ is a well-known theorem due to Hulanicki  \cite{hulanicki}, and the expression for $\kappa(0)$ given in Theorem \ref{thm_main} is essentially a consequence of a result due to Christ \cite{Christ91} (see  Theorem \ref{thm_christ} and Corollary \ref{cor_thm_christ}). 

\medskip

This paper is organised as follows. 
After preliminaries on nilmanifolds in Section \ref{sec_preliminar}
and on Rockland operators in Section \ref{sec_GR}, we 
discuss asymptotics  in Section \ref{sec_asymptotics}. The last section is devoted to the spectral zeta function. 

\medskip
 
 \noindent\textbf{Convention:}
In this paper, $\Gamma$ will denote a discrete co-compact subgroup of a nilpotent Lie group $G$. We will also use the same Greek letter to denote the classical Gamma function $\Gamma(s)$. As the natures of the two objects are different, there will be no confusion. 
We will use the following classical properties of the Gamma function without mentioning them.  $\Gamma(s)$ is a meromorphic function on $\bC$ with poles at $0,-1,-2,\ldots$ It satisfies 
$s\Gamma(s)=\Gamma(s+1)$ and 
\begin{equation}
\label{eq_lambda-sGamma}
\lambda ^{-s} = \frac1{\Gamma(s)} \int_0^\infty t^{s-1} e^{-t\lambda} dt, 
\quad \lambda>0, \ \Re s>0. 
\end{equation}
 Furthermore, $1/\Gamma(s)$ is entire, i.e. holomorphic on $\{s\in \bC\}$.

\section{Preliminaries on nilmanifolds, periodicity and periodisation}
\label{sec_preliminar}

\subsection{Definitions and examples of nilmanifolds}

A compact nil-manifold is the quotient $M=\Gamma\backslash G$ of 
a  nilpotent Lie group $G$  by a discrete co-compact subgroup $\Gamma$ of $G$.
In this paper, a nilpotent Lie group is always assumed  connected and simply connected unless otherwise stated. 

The vocabulary varies, and a discrete co-compact subgroup may also be called uniform or lattice in some literature, see e.g.  \cite[Section 5]{corwingreenleaf}. 
{In this paper, we will use only the vocabulary `discrete co-compact subgroup', although in the next section we will mention a connection with lattices of the Lie algebra of $\fg$. The word `lattice' will be reserved for a discrete co-compact subgroup of a finite dimensional vector space viewed as a commutative group.}

A concrete example of discrete co-compact subgroup is the natural discrete subgroup of the Heisenberg group, as described in 
\cite[Example 5.4.1]{corwingreenleaf}.
Further examples in step 2 nilpotent Lie groups can be found in \cite{Bauer+} and references therein. 
Abstract constructions for graded groups are discussed in Section \ref{subsec_Gamma4G}, and will use the following statement which   recalls  some abstract examples and characterisations from   \cite[Section 5.1]{corwingreenleaf} (the definition of Malcev bases is recalled below):

\begin{theorem}
\label{thm_CG}	
Let $G$ be a  nilpotent Lie group. Let $n=\dim G$. 
 \begin{enumerate}
\item 	A  subgroup $\Gamma$ of $G$ is discrete co-compact if and only if ${\rm Exp} Y_j \in \Gamma$ for $j=1,\ldots,n$ 
for some weak or strong Malcev basis $Y_1,\ldots,Y_n$ of $\fg$.
\item  A subgroup $\Gamma$ of $G$ is discrete co-compact  if and only if it can be written as 
$$
\Gamma={\rm Exp} (\bZ Y_1)  \ldots {\rm Exp} (\bZ Y_n)
$$
for some weak or strong Malcev basis $Y_1,\ldots,Y_n$ of $\fg$.
\end{enumerate}

Moreover, a group $G$ admits a uniform subgroup $\Gamma$ if and only if its Lie algebra $\fg$ has a rational structure. If this is the case, then a choice of rational structure  $\fg_{\mathbb Q}$ is the $\mathbb Q$-span of $\log \Gamma$.  

Furthermore, if $Y_1,\ldots,Y_n$ is a strong Malcev basis whose structural constants $c_{i,j,k}$ from $[Y_i,Y_j] = \sum_{k} c_{i,j,k} Y_k$ are all rational, 
then there exists a positive integer $K\in \bN$ such that the set 
$$
{\rm Exp} (K\bZ Y_1)  \ldots {\rm Exp} (K\bZ Y_n)
$$
 is a discrete co-compact subgroup of $G$.
\end{theorem}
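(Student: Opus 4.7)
Since this theorem gathers well-known results from \cite[Section 5.1]{corwingreenleaf}, I would follow closely the strategy there, organised around the coordinates of the second kind associated with a strong Malcev basis, together with induction on $\dim G$. Recall that for a strong Malcev basis $Y_1,\ldots,Y_n$ of $\fg$ (each $\fg_j = \mathrm{span}(Y_j,\ldots,Y_n)$ an ideal), the map
$$
\Phi : (t_1,\ldots,t_n) \longmapsto {\rm Exp}(t_1 Y_1) \cdots {\rm Exp}(t_n Y_n)
$$
is a global diffeomorphism $\bR^n \to G$, and the group law pulled back through $\Phi$ is polynomial with coefficients that are polynomials in the structural constants $c_{i,j,k}$. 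This will be the engine behind both parts (1) and (2).

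For the implication ``Malcev basis with ${\rm Exp}(Y_j)\in \Gamma$ $\Rightarrow$ $\Gamma$ discrete co-compact'' in (1), I would first show that the set $\Gamma_0:={\rm Exp}(\bZ Y_1)\cdots {\rm Exp}(\bZ Y_n)$ is actually a subgroup (using that each $\fg_j$ is an ideal, so conjugation of ${\rm Exp}(kY_j)$ by ${\rm Exp}(mY_i)$ with $i<j$ lands back in ${\rm Exp}(\bZ Y_{j}) \cdots {\rm Exp}(\bZ Y_n)$, provided the $c_{i,j,k}$ are integers on this subset; the general case reduces to this by induction), and then observe via $\Phi$ that $\Phi([0,1)^n)$ is a relatively compact fundamental domain. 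This gives $\Gamma_0\subset \Gamma$ discrete and co-compact in $G$, hence $\Gamma$ itself is so. For the converse in (2), I would argue by induction on $n=\dim G$: since $G$ is nilpotent, the center $Z$ is nontrivial, and a standard lemma (also used in the proof) says that if $\Gamma$ is discrete co-compact in $G$ then $\Gamma\cap Z$ is discrete co-compact in $Z$ and $\Gamma Z /Z$ is discrete co-compact in $G/Z$. Choosing $Y_1,\ldots,Y_k$ a $\bZ$-basis of $\log(\Gamma\cap Z)$ inside $\fz$ and lifting a Malcev basis of $\fg/\fz$ with the inductive property, I obtain the required Malcev basis of $\fg$ with ${\rm Exp}(Y_j)\in \Gamma$. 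Assertions (1) and (2) then follow together.

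For the rationality statement, one direction is immediate: given a Malcev basis $Y_1,\ldots,Y_n$ with ${\rm Exp}(Y_j)\in \Gamma$, a BCH computation in $\Gamma$ forces each bracket $[Y_i,Y_j]$ to have rational coordinates in $Y_1,\ldots,Y_n$, so the $\bQ$-span $\fg_{\bQ}$ of $\log \Gamma$ is a rational structure. The converse is where the ``furthermore'' clause naturally enters: starting from a $\bQ$-rational strong Malcev basis with rational constants $c_{i,j,k}$, I would choose $K\in\bN$ large enough to clear all denominators appearing in the BCH polynomials expressing the group law in the coordinates $\Phi$ up to nilpotency step $r$ (only finitely many iterated brackets appear because $\fg$ is nilpotent). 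Then ${\rm Exp}(K\bZ Y_1)\cdots {\rm Exp}(K\bZ Y_n)$ is closed under the group operations and yields a discrete co-compact subgroup via the fundamental domain argument of the first part.

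The main obstacle I expect is controlling this denominator $K$: one must verify that a single integer $K$ works simultaneously for \emph{all} products appearing in the multiplication and the inversion on $\Phi(\bR^n)$, and for that one needs the finiteness of the BCH expansion on a nilpotent Lie algebra together with some bookkeeping of how the structural constants propagate through iterated brackets. This is essentially the content of \cite[Theorem 5.1.8]{corwingreenleaf} and I would quote it; the rest of the proof is then a careful combination of the induction on $\dim G$ and the diffeomorphism properties of $\Phi$.
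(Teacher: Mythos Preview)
The paper does not prove this theorem: it is stated explicitly as a recall of results from \cite[Section~5.1]{corwingreenleaf}, with no argument given. Your proposal correctly recognises this and sketches the standard proof from that reference (second-kind coordinates via a Malcev basis, induction on $\dim G$ through the centre, and clearing BCH denominators for the rational case), which is exactly what the paper defers to. One small slip: with the paper's convention, for a strong Malcev basis it is $\bR Y_1\oplus\cdots\oplus\bR Y_m$ that is a subalgebra (and an ideal in the weak case), not $\mathrm{span}(Y_j,\ldots,Y_n)$; this does not affect your outline but you should adjust the indexing when writing it out.
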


Let us recall the notion of Malcev bases:
an (ordered) basis $Y_1,\ldots, Y_n$ of $\fg$ is a strong (resp. weak) Malcev basis when for each $m=1,\ldots,n$, the subspace $\bR Y_1 \oplus \ldots \oplus \bR Y_m$ is a Lie sub-algebra (resp. ideal) of the Lie algebra $\fg$.
  We refer the reader to \cite{corwingreenleaf} for examples and  properties  of these bases, and the reader unfamiliar with this notion can just consider this as a technical property satisfied by important bases, for instance by the basis constructed in  Section \ref{subsubsec_dilations} in the case of graded Lie groups.

 \subsection{Discrete co-compact subgroups}

There is a close connection between discrete co-compact subgroups and lattices in $\fg$ described in
\cite[Section 5.4]{corwingreenleaf}:

\begin{theorem}
\label{thm_InclusionLattice}
Let $\Gamma$ be a discrete co-compact subgroup of a nilpotent Lie group $G$.
Then there exists $\Gamma_0$ and $\Gamma_1$ discrete co-compact subgroups of $G$ such that 
\begin{itemize}
\item $\log \Gamma_0$ and $\log \Gamma_1$	are lattices of the vector space $\fg \sim \bR^n$, 
\item the inclusions $\Gamma_0 \subset \Gamma \subset \Gamma_1$ hold, and
\item $\Gamma/\Gamma_0$ and $\Gamma_1/\Gamma_0$ are finite sets. 
\end{itemize}
Furthermore, having written $\Gamma$ as ${\rm Exp} (\bZ Y_1)  \ldots {\rm Exp} (\bZ Y_n)$	
for some strong Malcev basis $Y_1,\ldots,Y_n$ of $\fg$, 
we may choose $\Gamma_0={\rm Exp} (K\bZ Y_1)  \ldots {\rm Exp} (K\bZ Y_n)$	for some suitable integer $K>0$, 
and $\Gamma_1={\rm Exp} (k_1\bZ Y_1)  \ldots {\rm Exp} (k_n\bZ Y_n)$ with $k_j=K^{-N^{n-j}}$, $j=1,\ldots, n$.
\end{theorem}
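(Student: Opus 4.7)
The plan is to exploit Theorem~\ref{thm_CG} to describe $\Gamma$ in Malcev coordinates with rational structure, then construct $\Gamma_0$ and $\Gamma_1$ by scaling the generators. Writing $\Gamma={\rm Exp}(\bZ Y_1)\cdots{\rm Exp}(\bZ Y_n)$ for a strong Malcev basis $Y_1,\ldots,Y_n$ of $\fg$, the rational structure $\fg_\bQ=\bQ\text{-span}(\log\Gamma)$ contains each $Y_j$, so the structural constants $c_{i,j,k}$ are rational. First I would choose a positive integer $K$ divisible by every denominator appearing in the $c_{i,j,k}$ and in the truncated Baker--Campbell--Hausdorff (BCH) series up to depth equal to the nilpotency step, and large enough to invoke the last assertion of Theorem~\ref{thm_CG}. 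Then $\Gamma_0:={\rm Exp}(K\bZ Y_1)\cdots{\rm Exp}(K\bZ Y_n)$ is a discrete co-compact subgroup, and the $\bZ$-lattice $L_0:=K\bZ Y_1\oplus\cdots\oplus K\bZ Y_n\subset\fg$ is closed under the BCH product, so that $\log\Gamma_0=L_0$ is a lattice of $\fg$. The inclusion $\Gamma_0\subset\Gamma$ is immediate from ${\rm Exp}(Kn Y_j)={\rm Exp}(Y_j)^{Kn}\in\Gamma$, and $\Gamma/\Gamma_0$ is finite because the natural projection $\Gamma_0\backslash G\to\Gamma\backslash G$ is a covering map with discrete fiber $\Gamma_0\backslash\Gamma$ and both the base and total space are compact.

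For $\Gamma_1$, I would set $k_j:=K^{-N^{n-j}}$ for a sufficiently large integer $N$ (for instance $N=n$) and put $\Gamma_1:={\rm Exp}(k_1\bZ Y_1)\cdots{\rm Exp}(k_n\bZ Y_n)$. The inclusion $\Gamma\subset\Gamma_1$ is clear since $k_j^{-1}\in\bN$, so $\bZ\subset k_j\bZ$. The substantive step is to show that $\Gamma_1$ is a subgroup, or equivalently that $L_1:=\bigoplus_j k_j\bZ Y_j$ is closed under BCH. Rewriting a product of generators of $\Gamma_1$ into canonical form $\prod_j{\rm Exp}(p_j Y_j)$ via BCH, each coordinate $p_j$ is a polynomial in the quantities $k_i a_i,k_i b_i$ ($a_i,b_i\in\bZ$) with rational coefficients whose denominators come from iterated brackets of depth at most $n-j+1$. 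A downward induction on $j$ from $j=n$ to $j=1$ confirms that the iterated exponent $N^{n-j}$ is precisely what is needed to absorb these denominators and force $p_j\in k_j\bZ$ at each level. Finiteness of $\Gamma_1/\Gamma_0$ then follows from the same compact-covering argument applied to $\Gamma_0\backslash G\to\Gamma_1\backslash G$.

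The main obstacle is the downward induction justifying the precise scaling $k_j=K^{-N^{n-j}}$: at each step of the BCH rewriting, the denominators in $p_j$ are controlled by products and brackets of the finer $k_i$ with $i>j$, so at the next lower level these denominators are themselves raised to a power, producing the iterated exponent. Careful bookkeeping is needed to verify that $N^{n-j}$ exactly balances this BCH blow-up. The remainder of the argument bundles standard facts: Malcev coordinates of the second kind, rationality of the coefficients appearing in BCH on a nilpotent Lie algebra, and the observation that a covering map of compact manifolds has finite fibers.
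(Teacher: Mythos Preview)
The paper does not supply its own proof of this theorem: it is quoted from \cite[Section~5.4]{corwingreenleaf} and stated without argument, so there is nothing in the paper to compare your proposal against. Your outline essentially reconstructs the standard argument from that reference and has the right architecture: rational structure constants via Theorem~\ref{thm_CG}, a choice of $K$ clearing BCH denominators for $\Gamma_0$, and the iterated-exponent scaling for $\Gamma_1$.

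One imprecision deserves attention. You write that $\Gamma_1$ being a subgroup is ``equivalent'' to $L_1:=\bigoplus_j k_j\bZ Y_j$ being BCH-closed, but your subsequent verification works in \emph{second-kind} (Malcev) coordinates, checking that the $p_j$ in $\prod_j{\rm Exp}(p_j Y_j)$ lie in $k_j\bZ$. These are not the same statement: the Malcev-coordinate computation shows $\Gamma_1$ is closed under multiplication, but does not by itself identify $\log\Gamma_1$ with the additive lattice $L_1$, which is part of the conclusion. The cleaner route is to prove BCH-closure of $L_1$ directly. Because $\fg$ is nilpotent and each $\fh_m=\bR Y_1\oplus\cdots\oplus\bR Y_m$ is an ideal, one has $[Y_i,Y_j]\in\fh_{\min(i,j)-1}$, so an $r$-fold bracket of $Y_{i_1},\ldots,Y_{i_r}$ contributes only to basis vectors $Y_\ell$ with $\ell$ strictly below each $i_s$; the scaling $k_j=K^{-N^{n-j}}$ is designed precisely so that $k_{i_1}\cdots k_{i_r}/k_\ell$ is then a positive power of $K$ large enough to swallow the rational BCH coefficient. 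That single computation yields both that ${\rm Exp}(L_1)$ is a group and that $\log\Gamma_1=L_1$; your downward-induction mechanism is the right engine, but it should be aimed at first-kind rather than second-kind coordinates.
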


\begin{corollary}
\label{corthm_InclusionLattice}
Let $\Gamma$ be a discrete co-compact subgroup of a nilpotent Lie group $G$. 
We identify $G$ with $\bR^n$ via the exponential mapping and a basis of $\fg$.
	Then for any $N>n$ and any norm $|\cdot|$ on the vector space $\bR^n\sim G$, the sum 
	$\sum_{\gamma \in \Gamma } (1+|\gamma|)^{-N}$  is finite.
\end{corollary}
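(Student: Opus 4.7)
The plan is to use Theorem \ref{thm_InclusionLattice} to reduce the sum over the (potentially complicated) subgroup $\Gamma$ to a sum over an honest lattice of $\fg\sim\bR^n$, and then to invoke the classical convergence criterion for lattice sums of the form $\sum (1+|v|)^{-N}$.

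First, I would apply Theorem \ref{thm_InclusionLattice} to $\Gamma$ to obtain a discrete co-compact subgroup $\Gamma_1\supset\Gamma$ such that $\log\Gamma_1$ is a lattice of the vector space $\fg\sim\bR^n$. Since the summand $(1+|\gamma|)^{-N}$ is non-negative, the trivial inclusion gives
$$
\sum_{\gamma\in\Gamma}(1+|\gamma|)^{-N}\ \leq\ \sum_{\gamma\in\Gamma_1}(1+|\gamma|)^{-N}.
$$
Under the identification of $G$ with $\bR^n$ via the exponential mapping and the chosen basis of $\fg$, the set $\Gamma_1$ corresponds exactly to the lattice $\Lambda:=\log\Gamma_1\subset\bR^n$, and $|\gamma|$ becomes the norm $|v|$ of the corresponding $v\in\Lambda$. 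Because any two norms on $\bR^n$ are equivalent, it suffices to prove the convergence $\sum_{v\in\Lambda}(1+|v|)^{-N}<\infty$ for any one convenient norm.

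Next, I would establish the standard lattice-sum estimate. Let $F$ be a bounded fundamental domain for $\Lambda$ in $\bR^n$ of volume $\vol(F)$, and set $D:=\sup_{w\in F}|w|<\infty$. For each $v\in\Lambda$ and each $x\in v+F$, the triangle inequality yields $|v|\leq |x|+D$, so $(1+|v|)^{-N}\leq c\,(1+|x|)^{-N}$ for some constant $c$ depending only on $D$ and $N$. Summing over $v$ and changing variables gives
$$
\vol(F)\sum_{v\in\Lambda}(1+|v|)^{-N}\ =\ \sum_{v\in\Lambda}\int_{v+F}(1+|v|)^{-N}\,dx\ \leq\ c\int_{\bR^n}(1+|x|)^{-N}\,dx,
$$
and the last integral is finite precisely when $N>n$ (by integration in polar coordinates).

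There is no real obstacle here, as the argument is essentially a combination of a structural result (Theorem \ref{thm_InclusionLattice}) with a routine comparison of a lattice sum to a Lebesgue integral; the only point requiring slight care is the passage from $\Gamma$ to the lattice $\Lambda$, which is precisely what Theorem \ref{thm_InclusionLattice} is tailored to provide.
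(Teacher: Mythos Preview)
Your argument is correct and matches the paper's proof: use Theorem~\ref{thm_InclusionLattice} to pass to a larger $\Gamma_1$ whose logarithm is a genuine lattice of $\fg\sim\bR^n$, then compare the resulting lattice sum with the convergent integral $\int_{\bR^n}(1+|x|)^{-N}\,dx$. (One minor slip: to deduce $(1+|v|)^{-N}\leq c\,(1+|x|)^{-N}$ you need the inequality $|x|\leq |v|+D$ rather than $|v|\leq |x|+D$; both follow from $x-v\in F$, so the conclusion is unaffected.)
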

\begin{proof}
We may assume that the basis of $\fg$ is   a strong Malcev basis $Y_1,\ldots,Y_n$  such that 
$\Gamma={\rm Exp} (\bZ Y_1)  \ldots {\rm Exp} (\bZ Y_n)$.
Theorem \ref{thm_InclusionLattice} implies that we may assume that $\Lambda =\log \Gamma$ is a lattice of $\fg\sim \bR^n$.
In this case, 
$$
\sum_{\gamma \in \Gamma } (1+|\gamma|)^{-N}
=
\sum_{m\in \Lambda } (1+|m|)^{-N}.
$$	
The result follows by comparisons with integrals on $\bR^n$. Indeed, we can fix a suitable norm on $\bR^n$ as all the norms on $\bR^n$ are equivalent. 
\end{proof}

\subsection{Fundamental domains}
An element of $M$ is a class 
$$
\dot x := \Gamma x
$$
 of an element $x$ in $G$. If the context allows it, we may identify this class with its representative $x$. 

The quotient $M$ is naturally equipped with the structure of a compact smooth manifold. 
Furthermore, fixing a Haar measure on the unimodular group $G$, 
$M$ inherits a measure $d\dot x$ which is invariant under the translations  given by
$$
\begin{array}{rcl}
M & \longrightarrow & M\\
\dot x & \longmapsto & \dot x g = \Gamma xg
\end{array}, \quad g\in G.
$$
Recall that the Haar measure $dx$ on $G$ is unique up to a constant and, once it is fixed, $d\dot x$ is the only $G$-invariant measure on $M$ satisfying 
for any  function $f:G\to \mathbb C$, for instance continuous with compact support,
\begin{equation}
\label{eq_dxddotx}
	\int_G f(x) dx = \int_M \sum_{\gamma\in \Gamma} f(\gamma x) \ d\dot x.
\end{equation}
The volume of $M$ is denoted by 
$$
\vol (M) := \int_M 1 d\dot x
$$ 
 
Some `nice' fundamental domains are described in \cite[Section 5.3]{corwingreenleaf}, and  simple modifications from  Theorem 5.3.1 in \cite{corwingreenleaf} and its proof yield:

\begin{proposition}
\label{prop_FundDom}
	Let $\Gamma$ be a discrete co-compact subgroup of a nilpotent Lie group $G$ described as 
	$\Gamma={\rm Exp} (\bZ Y_1)  \ldots {\rm Exp} (\bZ Y_n)$	
for some weak Malcev basis $Y_1,\ldots,Y_n$ of $\fg$ (see Theorem \ref{thm_CG}.

We set $R_0 := [-\frac 12,\frac 12)\times \ldots\times [-\frac 12,\frac 12)=[-\frac 12,\frac 12)^n$ and for every $m\in \bR^n$:
$$
R_m:=m+R_0 \quad\mbox{and}\quad D_m:=\{{\rm Exp}(t_1 Y_1) \ldots \ {\rm Exp}(t_n Y_n) \ : \ t=(t_1,\ldots,t_n)\in R_m\}.
$$
Then $D_m$ is a fundamental domain for $M=\Gamma\backslash G$.
Furthermore, the map 
$$
\Theta:\left\{\begin{array}{rcl}
\bR^n 	& \longrightarrow & M
\\
t &\longmapsto &\Gamma {\rm Exp}(t_1 Y_1) \ldots \ {\rm Exp}(t_n Y_n)
\end{array}\right. ,
$$
 maps $R_m$ onto $M$ and the Lebesgue measure $dt$ restricted to $R_m$ to the $G$-invariant measure on $M$.
 If $t,u\in R_m$ and $\gamma \in \Gamma$ satisfy $\Theta(t)=\gamma \Theta(u)$ then $t=u$ and $\gamma=0$. 
 Furthermore, if $t\in \bR^n$ and $\gamma \in \Gamma$ satisfy $\Theta(t)^{-1}\gamma \Theta(t)=0$ then  $\gamma=0$.
\end{proposition}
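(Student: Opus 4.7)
I would prove the proposition by induction on $n=\dim G$, exploiting the flag of ideals $\mathfrak{g}_m=\mathbb{R}Y_1\oplus\cdots\oplus\mathbb{R}Y_m$ (ideals because the basis is \emph{weak} Malcev) and the corresponding normal subgroups $G_m={\rm Exp}(\mathfrak{g}_m)$. The standing tool is the Corwin--Greenleaf fact that the lift $\Phi:\mathbb{R}^n\to G$, $(t_1,\ldots,t_n)\mapsto{\rm Exp}(t_1Y_1)\cdots{\rm Exp}(t_nY_n)$, is a global diffeomorphism whose Jacobian is identically $1$; after normalising the Haar measure this reads $\Phi_*(dt)=dx$. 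Uniqueness of Malcev coordinates identifies $\Gamma$ with $\Phi(\mathbb{Z}^n)$ bijectively, and since $\Phi(k)\in G_{n-1}$ forces $k_n=0$, one has $\Gamma\cap G_{n-1}={\rm Exp}(\mathbb{Z}Y_1)\cdots{\rm Exp}(\mathbb{Z}Y_{n-1})$, to which the induction hypothesis will apply (note that $Y_1,\ldots,Y_{n-1}$ is a weak Malcev basis of $\mathfrak{g}_{n-1}$).

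The key lemma is a last-coordinate computation: for $\gamma=\Phi(k)$ with $k\in\mathbb{Z}^n$ and $g=\Phi(t)$, the $n$-th Malcev coordinate of $\gamma g$ equals $k_n+t_n$. I obtain it by commuting ${\rm Exp}(k_nY_n)$ past ${\rm Exp}(t_1Y_1)\cdots{\rm Exp}(t_{n-1}Y_{n-1})$: since $\mathfrak{g}_{n-1}$ is an ideal, each conjugate ${\rm Exp}(k_nY_n){\rm Exp}(t_jY_j){\rm Exp}(-k_nY_n)$ stays in $G_{n-1}$, so $\gamma g=h\cdot{\rm Exp}((k_n+t_n)Y_n)$ for some $h\in G_{n-1}$. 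This is the one delicate step; it crucially needs $\mathfrak{g}_{n-1}$ to be an ideal, so the weak Malcev hypothesis cannot be relaxed---without it, conjugation by ${\rm Exp}(k_nY_n)$ would escape $G_{n-1}$ and the recursion would collapse.

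With this lemma the induction runs smoothly. For surjectivity of $\Theta|_{R_m}$: given $g=\Phi(t)$, pick the unique $k_n\in\mathbb{Z}$ with $t_n+k_n\in[m_n-\tfrac12,m_n+\tfrac12)$, pass to ${\rm Exp}(k_nY_n)g=h\cdot{\rm Exp}((t_n+k_n)Y_n)$ with $h\in G_{n-1}$, then apply the inductive hypothesis to translate $h$ into the $(n-1)$-dimensional box by some $\gamma_{n-1}\in\Gamma\cap G_{n-1}$; the product $\gamma_{n-1}{\rm Exp}(k_nY_n)\in\Gamma$ sends $g$ into $D_m$. For injectivity: $\Phi(t)=\gamma\Phi(u)$ with $t,u\in R_m$ forces $t_n=u_n+k_n$ by the lemma, and $t_n,u_n\in[m_n-\tfrac12,m_n+\tfrac12)$ combined with $k_n\in\mathbb{Z}$ yields $k_n=0$ and $t_n=u_n$; right-cancelling ${\rm Exp}(t_nY_n)$ drops the statement to $G_{n-1}$, where induction finishes. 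The measure claim then follows from $\Phi_*(dt)=dx$ via \eqref{eq_dxddotx}, and the final clause is tautological by left-right cancellation.
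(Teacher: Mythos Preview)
Your argument is correct and is essentially the standard inductive proof found in Corwin--Greenleaf (Theorem~5.3.1 and its proof), which is exactly what the paper appeals to in lieu of a self-contained argument. The only remark is that your aside ``the weak Malcev hypothesis cannot be relaxed'' is slightly overstated: it is your particular inductive scheme that needs $\mathfrak g_{n-1}$ to be an ideal, not the conclusion itself; but this is immaterial since the proposition is stated under that hypothesis.
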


\subsection{$\Gamma$-periodicity and periodisation}
Let $\Gamma$ be a discrete co-compact subgroup of a nilpotent Lie group $G$. 

We say that a function $f:G\rightarrow \mathbb C$  is  $\Gamma$-left-periodic or just  $\Gamma$-periodic  when we have 
$$
\forall x\in G,\;\;\forall \gamma\in \Gamma ,\;\; f(\gamma x)=f(x).
$$
This definition extends readily to measurable functions and to distributions.  

There is a natural one-to-one correspondence between the functions on $G$ which are $\Gamma$-periodic and the functions on $M$.
Indeed, for any map $F$ on $M$, 
the corresponding periodic function on $G$ is $F_G$ defined via
$$	 
F_G(x) := F(\dot x), \quad x\in G,
$$
while if $f$ is a $\Gamma$-periodic function on $G$, 
it defines a function $f_M$ on $M$ via
$$
f_M(\dot x) =f(x), \qquad x\in G.
$$
Naturally, $(F_G)_M=F$ and $(f_M)_G=f$.

We also  define, at least formally, the periodisation $\phi^\Gamma$ of a function $\phi(x)$ of the variable $x\in G$ by:
$$
\phi^\Gamma(x) = \sum_{\gamma \in \Gamma } \phi(\gamma x), \qquad x\in G.
$$

If $E$ is a space of functions or of distributions on $G$, then we denote by $E^\Gamma$ the space of elements in $E$ which are  $\Gamma$-periodic. 
Let us recall that $G$ is a smooth manifold which is identified with $\bR^n$ via the exponential mapping and polynomial coordinate systems. 
This leads to a corresponding Lebesgue measure on $\fg$ and the Haar measure $dx$ on the group $G$,
hence $L^p(G)\cong L^p(\bR^n)$.
This also allows us \cite[p.16]{corwingreenleaf}
to define the spaces 
$$
\cD(G)\cong \cD(\bR^n)
\quad \mbox{and}\quad  
\cS(G) \cong \cS(\bR^n)
$$
 of test functions which are smooth and compactly supported or Schwartz, 
and the corresponding spaces of distributions 
$$
\cD'(G)\cong \cD'(\bR^n)
\quad \mbox{and}\quad 
\cS'(G)\cong \cS'(\bR^n).
$$
Note that this identification with $\bR^n$ does not usually extend to the convolution: the group convolution, i.e. the operation between  two functions on $G$ defined formally via 
$$
 (f_1*f_2)(x):=\int_G f_1(y) f_2(y^{-1}x) dy,
$$
 is   not commutative in general whereas it is a commutative operation for functions on  the abelian group $\bR^n$.

We also define the set of functions
 $$
C_b^\infty(G):=\left\{f\in C^\infty(G): \sup_G|Y^\alpha f|<\infty\ \mbox{for every} \ \alpha\in \bN_0^n\right\},
$$
for some basis $Y_1,\ldots, Y_n$  of $\fg$ identified with left-invariant vector fields and 
$$
Y^{\alpha}=Y_1^{\alpha_1}Y_2^{\alpha_2}\cdots
Y_{n}^{\alpha_n}, \quad \alpha\in \bN_0^n.
$$
We check readily that 
the vector space $C_b^\infty(G)$ and its natural topology are independent of a choice of basis $Y_1,\ldots, Y_n$
and that 
$$
C^\infty(G)	^\Gamma= C_b^\infty(G)^\Gamma .
$$
Furthermore, we have:

\begin{lemma}
\label{lem_periodisation}
 The periodisation of a Schwartz  function $\phi\in \cS(G)$ is a well-defined function  $\phi^\Gamma$ in $C_b^\infty(G)^\Gamma$.
Furthermore, the map $\phi \mapsto \phi^\Gamma$ yields a surjective morphism of topological vector spaces from
$\cS(G)$ onto $C_b^\infty(G)^\Gamma$
and from
$\cD(G)$ onto $C_b^\infty(G)^\Gamma$.
 \end{lemma}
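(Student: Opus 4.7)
The plan is to prove, in sequence: (a) absolute convergence of the defining series and membership of $\phi^\Gamma$ in $C_b^\infty(G)^\Gamma$; (b) continuity of $\phi\mapsto \phi^\Gamma$; and (c) surjectivity by means of a smooth $\Gamma$-partition of unity.

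For (a), identify $G$ with $\bR^n$ via the exponential mapping and a basis of $\fg$. Because the Baker--Campbell--Hausdorff formula makes the group law polynomial in these coordinates, a routine computation yields a submultiplicativity estimate of the form $(1+|yz|)\leq C(1+|y|)(1+|z|)$, and hence, for each compact $K\subset G$, a constant $C_K$ with
$$
(1+|\gamma|)\leq C_K(1+|\gamma x|), \qquad \gamma\in \Gamma,\ x\in K.
$$
Combining this with Schwartz decay $|\phi(y)|\leq C_N(1+|y|)^{-N}$ gives
$$
\sup_{x\in K}\sum_{\gamma\in\Gamma}|\phi(\gamma x)| \ \leq\ C_{N,K}\sum_{\gamma\in\Gamma}(1+|\gamma|)^{-N},
$$
which is finite for $N>n$ by Corollary \ref{corthm_InclusionLattice}. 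The series thus converges absolutely and locally uniformly, so $\phi^\Gamma$ is continuous; it is $\Gamma$-periodic by reindexing $\gamma\mapsto\gamma\gamma_0$. Since left-invariant derivatives commute with left translations, $Y^\alpha\phi^\Gamma=(Y^\alpha\phi)^\Gamma$ for $Y^\alpha\phi\in\cS(G)$, so the same argument shows $\phi^\Gamma\in C^\infty(G)$ and that all derivatives are globally bounded, placing $\phi^\Gamma$ in $C_b^\infty(G)^\Gamma$.

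For (b), the estimates above bound each seminorm $\sup_G|Y^\alpha\phi^\Gamma|$ by a finite sum of Schwartz seminorms of $\phi$, proving continuity of $\cS(G)\to C_b^\infty(G)^\Gamma$; the restriction $\cD(G)\to C_b^\infty(G)^\Gamma$ is a fortiori continuous.

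For (c), the main point, I would build a smooth $\Gamma$-partition of unity. Pick $\chi_0\in\cD(G)$ nonnegative and strictly positive on a neighbourhood of the fundamental domain $D_0$ of Proposition \ref{prop_FundDom}; then every $x\in G$ admits some $\gamma\in\Gamma$ with $\gamma x\in D_0$, whence $\chi_0^\Gamma(x)>0$ everywhere. By (a), $\chi_0^\Gamma\in C_b^\infty(G)^\Gamma$, and compactness of $M$ forces $\chi_0^\Gamma\geq c>0$. Define $\chi:=\chi_0/\chi_0^\Gamma$; since the denominator is smooth, $\Gamma$-periodic, and bounded below, $\chi\in\cD(G)$ and by construction $\chi^\Gamma\equiv 1$. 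For any $F\in C_b^\infty(G)^\Gamma$, set $\phi:=F\chi\in\cD(G)\subset\cS(G)$, and compute
$$
\phi^\Gamma(x)=\sum_{\gamma\in\Gamma}F(\gamma x)\chi(\gamma x)=F(x)\sum_{\gamma\in\Gamma}\chi(\gamma x)=F(x),
$$
using $\Gamma$-periodicity of $F$. This establishes surjectivity onto $C_b^\infty(G)^\Gamma$ from both $\cD(G)$ and $\cS(G)$. The only subtle ingredient is the weight estimate $(1+|\gamma|)\leq C_K(1+|\gamma x|)$ underlying (a); everything else is a soft consequence of Corollary \ref{corthm_InclusionLattice} and the partition-of-unity construction.
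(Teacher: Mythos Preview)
Your approach is essentially the same as the paper's: both arguments establish convergence via a Baker--Campbell--Hausdorff weight estimate together with Corollary~\ref{corthm_InclusionLattice}, and both prove surjectivity by building a smooth partition of unity $\chi_0/\chi_0^\Gamma$ (the paper writes $\psi_0/\psi_0^\Gamma$, taking $\psi_0\equiv 1$ on $D_0$).

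There is, however, one technical slip in part~(a). The submultiplicativity estimate $(1+|yz|)\leq C(1+|y|)(1+|z|)$ is \emph{false} in general once the step of $G$ is at least~$3$: BCH produces iterated brackets such as $[Y,[Y,Z]]$ that are quadratic (or worse) in one argument, so for instance with $|y|\sim R$ and $|z|\sim 1$ one can have $|yz|\gtrsim R^2$ while $(1+|y|)(1+|z|)\sim R$. The correct general estimate, which the paper uses, is
\[
1+|ab|\ \leq\ C_0\,(1+|a|)^s(1+|b|)^s,
\]
where $s$ is the step of $G$. Consequently your derived bound should read $(1+|\gamma|)\leq C_K(1+|\gamma x|)^{s}$ for $x\in K$, and the summability via Corollary~\ref{corthm_InclusionLattice} then requires $N>ns$ rather than $N>n$. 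With this adjustment your argument goes through unchanged; the rest of the proof is correct and matches the paper.
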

 
 \begin{proof}[Proof of Lemma \ref{lem_periodisation}]
 We first  need to set some notation.  By Theorem \ref{thm_CG},
 we may assume that $\Gamma={\rm Exp} (\bZ Y_1)  \ldots {\rm Exp} (\bZ Y_n)$	
for some strong Malcev basis $Y_1,\ldots,Y_n$ of $\fg$.
As strong Malcev bases yields polynomial coordinates, 
we may identify $G$ with $\bR^n$ via the exponential mapping:
$y=(y_1,\ldots,y_n)\mapsto {\rm Exp}(y_1 Y_1+\ldots+y_n Y_n)$.
We fix a Euclidean norm $|\cdot|$ on $\bR^n\sim G$.
Note that $|y^{-1}|=|-y|=|y|$ and that 
the Baker-Campbell-Hausdorff formula implies the following modified triangle inequality:
\begin{equation}
\label{eq_notTrIneq}
	\exists C_0>0 \qquad 
\forall a,b\in G\qquad 
1+|ab|\leq C_0 (1+|a|)^s (1+|b|)^s,
\end{equation}
where $s$ is the step of $G$. 

We first   show that the periodisation of a Schwartz function $\phi\in \cD(G)$ makes sense as a function on $G$. As $\phi$ is Schwartz, for all $N\in \bN$ there exists $C=C_{\phi,N}$ such that 
$$
\forall x\in G\qquad |\phi(x)|\leq C (1+|x|)^{-N}, 
 $$
so by \eqref{eq_notTrIneq},
 $$
\forall x\in G, \ \gamma\in \Gamma\qquad |\phi(\gamma x)|\leq C 
(1+|\gamma x|)^{-N} 
\leq CC_0^{N/s} (1+|x|)^{N/s}  (1+|\gamma |)^{-N/s} .
 $$
The sum  $\sum_{\gamma\in \Gamma}(1+|\gamma |)^{-N/s}$
 is finite 
 by Corollary \ref{corthm_InclusionLattice}
  for $N$ large enough (it suffices to have $N>ns$).
 Hence the function $\phi^\Gamma$ is well defined on $G$. Furthermore, 
it is now a routine exercise to check that $\phi^\Gamma \in C_b^\infty(G)^\Gamma$ and that $\phi \mapsto \phi^\Gamma$ is  a morphism of topological vector spaces between $\cS(G)$   to $C_b^\infty(G)^\Gamma$, and also from  $\cD(G)$  to $C_b^\infty(G)^\Gamma$.
 	
 It remains to show the surjectivity of $\cD(G)\ni \phi\mapsto \phi^\Gamma \in C_b^\infty(G)^\Gamma$.
We observe
\begin{equation}
\label{eq_lem_periodisation_phipsi}
	\forall \phi \in C_b^\infty(G)^\Gamma, \ \forall \psi \in \cD(G)
\qquad \phi \, \psi \in \cD (G) \quad\mbox{and}\quad 
(\phi \psi)^\Gamma = \phi \, \psi^\Gamma.
\end{equation}
We fix $\psi_0\in \cD(G)$ valued in $[0,1]$ and such that $\psi_0=1$ on a fundamental domain of $M$, for instance the fundamental domain $D_0$ from Proposition \ref{prop_FundDom} to fix the ideas. We observe that $\psi_0^\Gamma\in 
C_b^\infty(G)^\Gamma$ {satisfies $\psi_0 \geq 1$}, and furthermore that $1/\psi_0^\Gamma$ is also in $C_b^\infty(G)^\Gamma$.
Given any $\phi \in C_b^\infty(G)^\Gamma$, 
applying \eqref{eq_lem_periodisation_phipsi}
to $\phi$ and $\psi = \psi_0 / \psi_0^\Gamma$ shows the surjectivity of $\cD(G)\ni \phi\mapsto \phi^\Gamma \in C_b^\infty(G)^\Gamma$ and concludes the proof of Lemma \ref{lem_periodisation}. 
 \end{proof}

\subsection{Spaces of periodic functions}
\label{subsec_periodicfcn}

We now examine $E^\Gamma$ for some spaces of functions $E$ on the nilpotent Lie group $G$, 
where  $\Gamma$ is a discrete co-compact subgroup of a nilpotent Lie group $G$.
Although 
$$
\cD(G)^\Gamma = \{0\} = \cS(G)^\Gamma,
$$
many other $E^\Gamma$ are isomorphic to important spaces of functions (or distributions) on $M$ as the following lemmata suggest. 

The definition of $ F_G$ and $f_M$  extend to measurable functions and we have:
\begin{lemma}
\label{lem_isomorphism_int}
For every $p\in [1,\infty]$,
the map $F\mapsto F_G$ is an isomorphism of topological vector spaces (in fact Banach spaces) from $L^p(M)$ onto $L^p_{loc}(G)^\Gamma$ with inverse $f\mapsto f_M$.
 \end{lemma}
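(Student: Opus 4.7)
The strategy is to use a fixed fundamental domain to transfer the $L^p$-structure between $M$ and $G$, and then to exploit $\Gamma$-periodicity and the local finiteness of $\Gamma$-translates of compact sets to recover the local $L^p$-structure on all of $G$.

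First, I would fix a fundamental domain $D=D_0$ from Proposition \ref{prop_FundDom}. The $G$-invariance of $d\dot x$ together with \eqref{eq_dxddotx} gives, for any measurable $F:M\to\bC$, the identity
$$
\int_M |F(\dot x)|^p \, d\dot x = \int_D |F_G(x)|^p\, dx,
\qquad \|F\|_{L^\infty(M)}=\|F_G\|_{L^\infty(D)}=\|F_G\|_{L^\infty(G)},
$$
where the last equality uses $\Gamma$-periodicity of $F_G$. Thus the maps $F\mapsto F_G$ and $f\mapsto f_M$ are inverse bijections between $L^p(M)$ and the subspace of $L^p_{loc}(G)^\Gamma$ whose restriction to $D$ lies in $L^p(D)$, and they are isometric if $L^p_{loc}(G)^\Gamma$ is equipped with the norm $\|\,\cdot\,\|_{L^p(D)}$.

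Next I would show that any $f\in L^p_{loc}(G)^\Gamma$ automatically has $f|_D\in L^p(D)$: since $D$ has compact closure in $G$ (because $\Gamma$ is co-compact), this follows directly from the definition of $L^p_{loc}$. Conversely, given $F\in L^p(M)$, I must verify $F_G\in L^p_{loc}(G)$. For a compact $K\subset G$, the set $\Gamma_K:=\{\gamma\in\Gamma : \gamma D\cap K\neq \emptyset\}$ is finite: indeed $\Gamma_K\subset K\cdot D^{-1}$ which is relatively compact, and $\Gamma$ is discrete. Writing $K\subset \bigcup_{\gamma\in\Gamma_K}\gamma\overline D$, left-invariance of Haar measure and $\Gamma$-periodicity of $F_G$ yield
$$
\int_K |F_G(x)|^p\, dx \;\leq\; \sum_{\gamma\in\Gamma_K}\int_{\gamma D}|F_G(x)|^p\, dx \;=\; |\Gamma_K|\int_D |F_G(y)|^p\, dy \;=\; |\Gamma_K|\,\|F\|_{L^p(M)}^p ,
$$
with the obvious analogue for $p=\infty$. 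This simultaneously proves $F_G\in L^p_{loc}(G)$ and establishes the continuity of $F\mapsto F_G$ as a map from $L^p(M)$ into $L^p_{loc}(G)^\Gamma$ (endowed with its Fr\'echet topology generated by the seminorms $\|\cdot\|_{L^p(K)}$).

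For the reverse continuity, the estimate $\|f_M\|_{L^p(M)}=\|f\|_{L^p(D)}\leq \|f\|_{L^p(K)}$ for any compact $K\supset D$ shows that $f\mapsto f_M$ is continuous from $L^p_{loc}(G)^\Gamma$ to $L^p(M)$. Combined with the previous bound, the family of seminorms $\|\,\cdot\,\|_{L^p(K)}$ on $L^p_{loc}(G)^\Gamma$ is equivalent to the single norm $\|\,\cdot\,\|_{L^p(D)}$, so $L^p_{loc}(G)^\Gamma$ is in fact a Banach space and the maps are mutually inverse isomorphisms of Banach spaces. The only step requiring care is the local finiteness $|\Gamma_K|<\infty$, which is the only place the discreteness and co-compactness of $\Gamma$ are used beyond what already went into Proposition \ref{prop_FundDom}; everything else is a direct transcription of the definitions.
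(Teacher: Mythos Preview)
Your proof is correct and is precisely the argument the paper has in mind: the paper simply writes that the result ``follows readily from the description of fundamental domains above, see Proposition~\ref{prop_FundDom}'' and leaves the details to the reader, and you have supplied exactly those details---the isometry $\|F\|_{L^p(M)}=\|F_G\|_{L^p(D_0)}$, the relative compactness of $D_0$, and the local finiteness of $\{\gamma\in\Gamma:\gamma D_0\cap K\neq\emptyset\}$ to control all seminorms on $L^p_{loc}(G)^\Gamma$ by the single norm on $D_0$.
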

The proof follows readily from the description of fundamental domains above, see Proposition \ref{prop_FundDom}. It is left to the reader.

We also check readily:
\begin{lemma}
\label{lem_isomorphism_smooth}
The mapping $F\mapsto F_G$ is an isomorphism of topological vector spaces from 
$\cD(M)  $ onto 
$ C_b^\infty(G)^\Gamma $  with inverse $f\mapsto f_M$.
\end{lemma}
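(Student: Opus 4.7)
My plan is to verify directly the three requirements: the two maps are well defined, they are mutual inverses as set maps, and they are homeomorphisms in the natural Fr\'echet topologies. Compactness of $M$ will be essential because it turns ``bounded derivative'' into a free condition on the $M$-side, and it will identify $\cD(M)$ with $C^\infty(M)$.

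\textbf{Step 1 (well-definedness of $F\mapsto F_G$).} Let $\pi:G\to M$, $x\mapsto \dot x$, be the quotient map. Since $\Gamma$ acts freely, properly discontinuously and by diffeomorphisms, $\pi$ is a smooth covering; hence for $F\in\cD(M)=C^\infty(M)$ the composition $F_G=F\circ\pi$ is smooth. It is $\Gamma$-periodic because $\pi(\gamma x)=\pi(x)$. For boundedness of derivatives, fix a basis $Y_1,\ldots,Y_n$ of $\fg$ identified with left-invariant vector fields on $G$. Left-invariant vector fields commute with left translations, hence with the $\Gamma$-action, so each $Y_j$ descends to a smooth vector field $\tilde Y_j$ on $M$ with $\pi_*Y_j=\tilde Y_j$. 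Iterating gives
$$
(Y^\alpha F_G)(x)=(\tilde Y^\alpha F)(\dot x),\qquad x\in G,\ \alpha\in\bN_0^n,
$$
and compactness of $M$ yields $\sup_G|Y^\alpha F_G|=\sup_M|\tilde Y^\alpha F|<\infty$. Thus $F_G\in C_b^\infty(G)^\Gamma$.

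\textbf{Step 2 (well-definedness of $f\mapsto f_M$).} If $f\in C_b^\infty(G)^\Gamma$, then $f_M(\dot x):=f(x)$ is well defined by $\Gamma$-periodicity. Smoothness on $M$ is a local matter: around any point of $M$ we can choose a local smooth section of the covering $\pi$ (for instance built from the fundamental domain $D_0$ of Proposition \ref{prop_FundDom}), and in such a chart $f_M$ is nothing but the restriction of the smooth function $f$. Hence $f_M\in C^\infty(M)=\cD(M)$.

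\textbf{Step 3 (mutual inverses and topology).} The equalities $(F_G)_M=F$ and $(f_M)_G=f$ are just the definitions of the two maps, so $F\mapsto F_G$ and $f\mapsto f_M$ are inverse bijections. For continuity, recall that the topology on $C_b^\infty(G)$ is defined by the seminorms $f\mapsto\sup_G|Y^\alpha f|$, while $\cD(M)=C^\infty(M)$ is a Fr\'echet space whose topology is generated by the seminorms $F\mapsto\sup_M|\tilde Y^\alpha F|$ (using the vector fields $\tilde Y_j$ built above; this family separates points and any other smooth vector field on $M$ can be written, locally and hence globally on the compact $M$, as a $C^\infty$-combination of the $\tilde Y^\alpha$, so the resulting topology is the standard one). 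Step 1 shows that these seminorms match exactly under $F\mapsto F_G$, so both the map and its inverse are continuous.

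The argument is essentially formal once Steps 1--2 are in place; the only mildly delicate point, which is the piece I would be most careful with, is the identification of seminorms in Step 3 and the fact that left-invariant vector fields on $G$ do descend to $M$ (not merely to $G/\Gamma$ as a set), which is exactly why the quotient is taken by \emph{left} translation by $\Gamma$ while we use \emph{left}-invariant differential operators.
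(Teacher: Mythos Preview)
Your proof is correct and complete. The paper itself does not give a proof of this lemma --- it introduces the statement with ``We also check readily:'' and moves on --- so your detailed verification (descent of left-invariant vector fields to a global frame on $M$, matching of the seminorms, local sections for smoothness of $f_M$) is exactly the routine check the paper leaves to the reader.
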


Consequently, $\cD'(M)  $ is isomorphic to the dual of 
$C_b^\infty(G)^\Gamma $. This allows for a first distributional meaning to $F\mapsto F_G$ 
with $F_G$ in the continuous dual of $C_b^\infty(G)^\Gamma$ and extended to $C_b^\infty(G)$ by Hahn-Banach's theorem.
However, we prefer to extend the definition of $F_G$ to the case of distributions in the following way: if $F\in \cD'(M)$, then $F_G$ given by 
$$
\forall \phi\in \cS(G)\qquad 
\langle F_G,\phi\rangle  = \langle F , (\phi^\Gamma)_M\rangle.
$$
is a tempered distribution by 
Lemmata \ref{lem_isomorphism_smooth} and \ref{lem_periodisation}. One checks easily that it is periodic and that it coincides with any other definition given above, for instance on  $ \cup_{p\in [1,\infty)} L^p(M)$. Furthermore, we have:
\begin{lemma}
\label{lem_isomorphism_distrib}
 We have $\cD'(G)^\Gamma=\cS'(G)^\Gamma$,
 and the map $F\mapsto F_G$ yields an isomorphism of topological vector spaces
from $\cD'(M)$ onto $\cS'(G)^\Gamma$.
 \end{lemma}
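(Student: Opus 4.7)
The overall plan is to verify directly that $\Phi : F \mapsto F_G$ is a continuous injection $\cD'(M) \to \cS'(G)^\Gamma$, and then to handle simultaneously its surjectivity and the equality $\cD'(G)^\Gamma = \cS'(G)^\Gamma$ by a single construction using a compactly supported partition of unity on $G$ subordinate to $\Gamma$. For the easy checks: combining Lemma~\ref{lem_periodisation} with Lemma~\ref{lem_isomorphism_smooth} shows that $\phi \mapsto (\phi^\Gamma)_M$ is continuous from $\cS(G)$ into $\cD(M)$, so the formula $\langle F_G, \phi\rangle := \langle F, (\phi^\Gamma)_M\rangle$ produces an element of $\cS'(G)$ depending continuously on $F$. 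The $\Gamma$-periodicity of $F_G$ follows from $(\phi(\gamma_0 \cdot))^\Gamma = \phi^\Gamma$ (a reindexing $\gamma \to \gamma_0^{-1}\gamma$ in the defining sum), and injectivity from the surjectivity statement in Lemma~\ref{lem_periodisation}: every $\psi \in \cD(M)$ is of the form $(\phi^\Gamma)_M$ for some $\phi \in \cD(G)$.

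For the substantive part, I would fix a cutoff $\chi \in \cD(G)$ with $\chi^\Gamma \equiv 1$, which exists by applying Lemma~\ref{lem_periodisation} to the constant function $1 \in C_b^\infty(G)^\Gamma$. For an arbitrary $v \in \cD'(G)^\Gamma$ (not assumed tempered a priori), define $F$ on $\cD(M)$ by $\langle F, \psi\rangle := \langle v, \chi \psi_G\rangle$; this makes sense because $\chi \psi_G \in \cD(G)$. The goal is the key identity
$$
\langle v, \phi \rangle = \langle v, \chi \phi^\Gamma \rangle, \qquad \phi \in \cD(G).
$$
Once this is in hand, the independence of $F$ on the choice of $\chi$ is immediate (apply it to $\phi = (\chi_1 - \chi_2)\psi_G \in \cD(G)$, whose periodisation vanishes by $\Gamma$-periodicity of $\psi_G$), and likewise $F_G = v$ on $\cD(G)$. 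Since $F \in \cD'(M)$ forces $F_G \in \cS'(G)^\Gamma$, this single conclusion simultaneously yields $\cD'(G)^\Gamma \subset \cS'(G)^\Gamma$ and the surjectivity of $\Phi$.

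The main (essentially only) obstacle is the key identity above. The plan is to use $\chi^\Gamma \equiv 1$ as a partition of unity, expanding
$$
\phi(x) = \phi(x)\chi^\Gamma(x) = \sum_{\gamma \in \Gamma} \phi(x)\chi(\gamma x),
$$
a sum which is finite in $\gamma$ because $\phi$ and $\chi$ have compact supports (only $\gamma \in (\supp\chi)(\supp\phi)^{-1} \cap \Gamma$ contribute). Distributing $\langle v, \cdot\rangle$ over this finite sum, the key step is
$$
\langle v, \phi(\cdot)\chi(\gamma\cdot)\rangle = \langle v, \chi(\cdot)\phi(\gamma^{-1}\cdot)\rangle, \qquad \gamma \in \Gamma,
$$
obtained by changing variables $y = \gamma x$ and using the $\Gamma$-periodicity of $v$ (the distributional encoding of $v(\gamma^{-1} y) = v(y)$ together with left-invariance of the Haar measure). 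Summing and re-indexing $\gamma \to \gamma^{-1}$, which is a bijection of $\Gamma$, reassembles $\chi \phi^\Gamma$ on the right, yielding the identity. The only real care needed is the rigorous distributional formulation of the change of variable, i.e.\ unpacking the adjoint action of $\Gamma$ on $v$; the arithmetic itself is a direct Poisson-style manipulation, entirely analogous to what one does for periodic distributions on the flat torus.
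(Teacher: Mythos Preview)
Your proof is correct and follows essentially the same route as the paper. The paper also constructs the inverse by pairing against $\chi\,\psi_G$ for a cutoff with $\chi^\Gamma\equiv 1$ (there written as $\psi_0/\psi_0^\Gamma$), and the step you isolate as the ``key identity'' $\langle v,\phi\rangle=\langle v,\chi\,\phi^\Gamma\rangle$ is exactly what the paper collapses into the phrase ``by periodicity of $f$''; you have simply unpacked that step in full via the finite partition-of-unity expansion and the $\gamma\mapsto\gamma^{-1}$ reindexing.
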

 
 \begin{proof}
 	Lemmata \ref{lem_periodisation} and \ref{lem_isomorphism_smooth} imply easily that $F\mapsto F_G$ is a morphism of topological vector spaces
from $\cD'(M)$ to $\cS'(G)^\Gamma$.
We can construct its inverse using the function $\psi_0\in \cD(G)$ from the proof of Lemma \ref{lem_periodisation}.
If $f\in \cD'(G)^\Gamma$ then we define
$$
\forall \psi\in \cD(M) \qquad \left\langle f_M,\psi \right\rangle := \left\langle f, \psi_G \frac {\psi_0}{\psi_0^\Gamma}\right\rangle. 
$$
Lemma \ref{lem_isomorphism_smooth} implies easily that $f_M$ is a distribution on $M$ and that $f\mapsto f_M$ is a morphism of topological vector spaces
from  $\cD'(G)^\Gamma$ to $\cD'(M)$.
Furthermore, it gives the inverse of $F\mapsto F_G$ since we have first from the definitions of these two mappings:
$$
\forall \phi \in \cD(G)\qquad
\left\langle (f_M)_G,\phi\right\rangle 
= \left\langle f_M,(\phi^\Gamma)_M \right \rangle
= \left\langle f,\phi^\Gamma  \frac {\psi_0}{\psi_0^\Gamma}\right \rangle
= \left\langle f,\phi \right\rangle,
$$
by periodicity of $f$. Hence $f=(f_M)_G \in \cS'(G)^\Gamma$ for any $f\in 
\cD'(G)^\Gamma$. The statement follows. 
 \end{proof}
 
One checks easily that the inverse $f\mapsto f_M$  of $F\mapsto F_G$ constructed in the proof above coincides with any other definition of $f\mapsto f_M$ given above, for instance on $ \cup_{p\in [1,\infty)} L^p_{loc}(G)$.
Moreover, for every  $p\in [1,\infty)$,  since $L^p(M)\subset \cD'(M)$ , we have  $L^p_{loc}(G)^\Gamma \subset \cS'(G)^\Gamma$
by Lemma \ref{lem_isomorphism_distrib}. 

\subsection{Convolution and periodicity}
\label{subsec_conv+per}
We already know that the convolution of a tempered distribution with a Schwartz function is smooth and bounded on a nilpotent Lie group $G$.
 When the distribution is periodic under the discrete co-compact subgroup of $G$, we also have the following properties, in particular a type of Young's convolution inequality:

\begin{lemma}
\label{lem_convolution}
Let $f\in \cS'(G)^\Gamma$ and $\kappa\in \cS(G)$.
Then $f*\kappa \in C_b^\infty(G)^\Gamma$.
Viewed as a function on $M$, 
$$
(f*\kappa)_M(\dot x) = \int_M f_M(\dot y) \ (\kappa (\cdot ^{-1} x)^\Gamma)_M (\dot y) d\dot y
= \int_M f_M(\dot y) \sum_{\gamma\in \Gamma} \kappa (y^{-1} \gamma x) \  d\dot y.
$$

If $F\in L^p(M)$ for $p\in [1,+\infty]$, then $F_G\in \cS'(G)^\Gamma$  and we have
$$
\|	(F_G*\kappa)_M\|_{L^p(M)}
\leq
\|F\|_{L^p( M)}\|\kappa\|_{L^1(G)}
$$ 
\end{lemma}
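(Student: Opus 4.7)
The plan is to first establish that $f*\kappa$ is a smooth function, then pin down its $\Gamma$-periodicity via a change of variables in the distributional pairing, derive the kernel formula on $M$ using the isomorphism of Lemma \ref{lem_isomorphism_distrib} and the periodisation of Lemma \ref{lem_periodisation}, and finally deduce the $L^p$ estimate via a Schur test.

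For smoothness, since inversion and right translation by $x$ are polynomial in exponential coordinates on the nilpotent group $G$, the function $\check\kappa_x(y):=\kappa(y^{-1}x)$ lies in $\cS(G)$ for each fixed $x\in G$ and depends smoothly on $x$, with left-invariant derivatives in $x$ passing onto $\kappa$. Hence the standard pairing $(f*\kappa)(x):=\langle f,\check\kappa_x\rangle$ defines a smooth function of $x$. To see $\Gamma$-periodicity, fix $\gamma\in\Gamma$ and observe
$$
\check\kappa_{\gamma x}(y)=\kappa(y^{-1}\gamma x)=\kappa((\gamma^{-1}y)^{-1}x)=\check\kappa_x(\gamma^{-1}y).
$$
Since $f\in \cS'(G)^\Gamma$ is invariant under left translations by $\gamma$ in the distributional sense, pairing with this translated Schwartz function returns $\langle f,\check\kappa_x\rangle$, so $(f*\kappa)(\gamma x)=(f*\kappa)(x)$. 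A smooth $\Gamma$-periodic function on $G$ is bounded with all left-invariant derivatives since $M$ is compact; this places $f*\kappa$ in $C_b^\infty(G)^\Gamma$.

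For the kernel formula, I would write $f=F_G$ with $F\in\cD'(M)$ via Lemma \ref{lem_isomorphism_distrib} and apply the defining identity $\langle F_G,\phi\rangle=\langle F,(\phi^\Gamma)_M\rangle$ to $\phi=\check\kappa_x\in \cS(G)$, which is legitimate by Lemma \ref{lem_periodisation}. A direct computation, followed by reindexing $\gamma\to\gamma^{-1}$, gives
$$
\check\kappa_x^{\Gamma}(y)=\sum_{\gamma\in\Gamma}\kappa((\gamma y)^{-1}x)=\sum_{\gamma\in\Gamma}\kappa(y^{-1}\gamma x),
$$
and this periodisation lies in $C_b^\infty(G)^\Gamma$. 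Identifying the distributional pairing $\langle F,(\check\kappa_x^\Gamma)_M\rangle$ with the advertised integral against $f_M$ over the compact $M$ yields the stated formula.

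For the $L^p$ bound when $F\in L^p(M)$, the formula exhibits $(F_G*\kappa)_M$ as an integral operator on $M$ with kernel $K(\dot x,\dot y)=\sum_{\gamma\in\Gamma}\kappa(y^{-1}\gamma x)$. Using the unfolding identity \eqref{eq_dxddotx} and unimodularity of the nilpotent group $G$,
$$
\int_M |K(\dot x,\dot y)|\,d\dot y=\int_G|\kappa(y^{-1}x)|\,dy=\|\kappa\|_{L^1(G)},
$$
and symmetrically $\int_M |K(\dot x,\dot y)|\,d\dot x=\|\kappa\|_{L^1(G)}$. The Schur test then gives the operator bound $\|\kappa\|_{L^1(G)}$ on $L^p(M)$ for every $p\in[1,\infty]$. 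The main obstacle is the distributional bookkeeping: verifying the periodicity at the distributional level and matching $(\check\kappa_x)^\Gamma$ to $\sum_\gamma\kappa(y^{-1}\gamma x)$ with the correct $\gamma$-indexing. Once that is in place, everything else is a clean unfolding argument and a Schur test.
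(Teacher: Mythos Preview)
Your proposal is correct and follows essentially the same route as the paper: periodicity via the change of variables $y\mapsto \gamma y$, the kernel formula via periodisation and the unfolding identity \eqref{eq_dxddotx}, and the $L^p$ bound from the two Schur estimates $\int_M|K(\dot x,\dot y)|\,d\dot y,\ \int_M|K(\dot x,\dot y)|\,d\dot x\le \|\kappa\|_{L^1(G)}$ (the paper writes this last step via Jensen's inequality, which is the same computation). One small slip: your displayed equalities $\int_M |K(\dot x,\dot y)|\,d\dot y=\|\kappa\|_{L^1(G)}$ should be inequalities, since $|K|=\bigl|\sum_\gamma\kappa(y^{-1}\gamma x)\bigr|\le \sum_\gamma|\kappa(y^{-1}\gamma x)|$ before you unfold; only the inequality is needed for the Schur test.
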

\begin{proof}
We check readily for $x\in G$ and $\gamma\in \Gamma$
$$
f*\kappa (\gamma x)  =
\int_G f(y) \kappa(y^{-1} \gamma x) dy= \int_G f(\gamma z) \kappa (z^{-1}x)dz
= \int_G f(z) \kappa (z^{-1}x)dz= f*\kappa (x).
$$ 	
The formula on $M$ follows from \eqref{eq_dxddotx}.

Let $F\in L^p(M)$ for $p\in [1,+\infty]$.
As a consequence of Lemmata \ref{lem_isomorphism_int} and \ref{lem_isomorphism_distrib},
 $F_G\in \cS'(G)^\Gamma \cap L^p_{loc}(G)^\Gamma$.
By Lemmata \ref{lem_periodisation} and \ref{lem_isomorphism_smooth}, for each fixed $\dot x\in M$, we can set
$$
d_{\dot x}(\dot y):=\kappa (\cdot ^{-1} x)^\Gamma (y) = \sum_{\gamma\in \Gamma} \kappa (y^{-1} \gamma x),
$$ 
and this defines a smooth function $d_{\dot x}$ on $M$.
Furthermore, $\dot x\mapsto d_{\dot x}$ is continuous from $M$ to $\cD(M)$.
This function allows us to write the more concise formula
$$
 (F_G*\kappa)_M(\dot x)
   = \int_M F(\dot y) d_{\dot x}(\dot y) d\dot y.
 $$
 
 The decomposition of the Haar measure in \eqref{eq_dxddotx} and its  invariance  under translation imply
\begin{align}
\label{eq_pf_lem_convolution_xfixed}
	\|d_{\dot x}\|_{L^1(M)} \leq 
\int_M  \sum_{\gamma\in \Gamma} |\kappa (y^{-1} \gamma x)| \  d\dot y
=
\int_G |\kappa (y^{-1} x)| \  d y
=\|\kappa\|_{L^1(G)}, \quad \dot x\in M \ \mbox{(fixed)},
\\	
\label{eq_pf_lem_convolution_yfixed}
\int_M |d_{\dot x}(\dot y)| d\dot x
\leq 
 \int_M  \sum_{\gamma\in \Gamma} |\kappa (y^{-1} \gamma x)| \  d\dot x
=
\int_G |\kappa (y^{-1} x)| \  d x
=\|\kappa\|_{L^1(G)}, \quad \dot y\in M \ \mbox{(fixed)}.
\end{align}

The case $p=+\infty$ follows from \eqref{eq_pf_lem_convolution_xfixed} since we have
$$
 	|(F_G*\kappa)_M(\dot x)|
     \leq \|F\|_{L^\infty(M)} \|d_{\dot x}\|_{L^1(M)} = \|F\|_{L^\infty(M)} \|\kappa\|_{L^1(G)}.
$$

Let $p\in [1,+\infty)$. 
By Jensen's inequality, we have for any fixed $\dot x\in M$
$$
 	|(F_G*\kappa)_M(\dot x)|^p
   = \left|\int_M F(\dot y) d_{\dot x} (\dot y) d\dot y \right|^p 
   \leq \|d_{\dot x}\|_{L^1(M)}^p \int_M \left| F(\dot y)\right|^{p-1}  |d_{\dot x} (\dot y)| d\dot y.
   $$
Integrating against $\dot x\in M$ and using \eqref{eq_pf_lem_convolution_yfixed} conclude the proof. 
\end{proof}

\subsection{Operators on $M$ and $G$}
\label{subsec_OpGM}

{If $f:G\to \bC$ is a function on $G$ and $g\in G$, we denote by  $f(g \, \cdot)$  the function $x\mapsto f(g x)$ on $G$. One checks readily that $f\mapsto f( g\, \cdot)$ is an isomorphism of topological vector spaces on $\cS(G)$ and on $\cD(G)$. By duality, the definition of $f(g \, \cdot)$ extends to distributions in $\cD'(G)$. Furthermore, $f\mapsto f( g\, \cdot)$ is an isomorphism of topological vector spaces on $\cS'(G)$ and on $\cD'(G)$.
We say that a mapping $T:\cS'(G)\to \cS'(G)$
or $\cD'(G) \to \cD'(G)$ is invariant under an element $g\in G$ when  
$$
\forall f\in \cS'(G) \ (\mbox{resp.} \, \cD'(G)), \qquad T(f(g \, \cdot)) = (Tf)(g \, \cdot).
$$ 
It is invariant under a subset of $G$ if it is invariant under every element of the subset.}

{Consider a linear continuous mapping $T:\cS'(G)\to \cS'(G)$
or $\cD'(G) \to \cD'(G)$ respectively
which is invariant under $\Gamma$. Then it  naturally induces an operator $T_M$ on $M$ via
$$
T_M F = (TF_G)_M, \qquad F\in \cD'(M).
$$
By Lemma \ref{lem_isomorphism_distrib}, $T_M :\cD'(M)\to \cD'(M)$  is 
a linear continuous mapping. }

For convolution operators, 
the results in  Section \ref{subsec_periodicfcn}, especially Lemma \ref{lem_convolution}, yield:
\begin{lemma}
\label{lem_IntKernel}
Let $\kappa\in \cS(G)$ be a given convolution kernel, and let us denote by $T$ the associated convolution operator:
$$
T (\phi) = \phi*\kappa, \qquad \phi\in \cS'(G).
$$
The operator $T$ is a linear continuous mapping $\cS'(G)\to \cS'(G)$. 
The corresponding operator $T_M$ maps  $\cD'(M)$ to $\cD'(M)$ continuously and linearly. Its  integral kernel is the smooth function $K$ on $M\times M$ given by
$$
K(\dot x,\dot y) = 
\sum_{\gamma\in \Gamma}  \kappa(y^{-1}\gamma  x).
$$
Consequently, the operator $T_M$ is Hilbert-Schmidt on $L^2(M)$ with Hilbert-Schmidt norm 
$$
\|T_M\|_{HS} = \|K\|_{L^2(M\times M)}.
$$
\end{lemma}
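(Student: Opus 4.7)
The continuity of $T:\cS'(G)\to\cS'(G)$ is the classical fact recalled at the opening of Section~\ref{subsec_conv+per} that right-convolution with a Schwartz kernel preserves and acts continuously on tempered distributions on a nilpotent Lie group. A direct change of variable in the defining integral shows that right-convolution by $\kappa$ commutes with left-translations by every $g\in G$, so in particular $T$ is $\Gamma$-invariant and the construction of Section~\ref{subsec_OpGM} produces a continuous linear operator $T_M:\cD'(M)\to\cD'(M)$. Applying Lemma~\ref{lem_convolution} with $f=F_G$ for $F\in\cD(M)$, the identity
$$
(F_G*\kappa)_M(\dot x)=\int_M F(\dot y)\,\sum_{\gamma\in\Gamma}\kappa(y^{-1}\gamma x)\,d\dot y
$$
exhibits $K(\dot x,\dot y):=\sum_{\gamma\in\Gamma}\kappa(y^{-1}\gamma x)$ as the integral kernel of $T_M$; independence of the choice of representatives follows from the change of summation $\gamma\mapsto\gamma_2\gamma\gamma_1^{-1}$, which is a bijection of $\Gamma$ and yields $K(\gamma_1 x,\gamma_2 y)=K(x,y)$ for all $\gamma_1,\gamma_2\in\Gamma$.

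The main technical point is the joint smoothness of $K$ on $M\times M$. I would establish it by showing that the defining series, together with all its derivatives $Y^\alpha_x Y^\beta_y$ in the $x$- and $y$-variables, converges absolutely and uniformly on compact subsets of $G\times G$. The key ingredient is the modified triangle inequality \eqref{eq_notTrIneq}: iterating it in the factorisations $\gamma=(\gamma x)\,x^{-1}$ and $\gamma x=y\,(y^{-1}\gamma x)$ yields
$$
1+|\gamma|\;\leq\; C\,(1+|x|)^{\sigma}(1+|y|)^{\sigma}(1+|y^{-1}\gamma x|)^{\sigma}
$$
for some exponent $\sigma$ depending only on $G$ (one may take $\sigma=s^2$, with $s$ the step of $G$). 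Combined with the Schwartz decay of $\kappa$ and of each of its derivatives, every term $Y^\alpha_x Y^\beta_y\,\kappa(y^{-1}\gamma x)$ is dominated on any fixed compact subset of $(x,y)$ by $C_{\alpha,\beta,N}\,(1+|\gamma|)^{-N/\sigma}$. Taking $N$ large, the majorant series is summable by Corollary~\ref{corthm_InclusionLattice}, which gives $K\in C^\infty(G\times G)$; combined with the $\Gamma\times\Gamma$-invariance established above this yields $K\in C^\infty(M\times M)$.

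Since $M$ is compact, $K\in C^\infty(M\times M)\subset L^2(M\times M)$, and the Hilbert--Schmidt assertion is then the standard identity between the Hilbert--Schmidt norm of an integral operator on a finite-measure space and the $L^2$ norm of its kernel. The only real obstacle in this plan is the smoothness of $K$: it rests on pairing the Schwartz decay of $\kappa$ with the Baker--Campbell--Hausdorff-based inequality \eqref{eq_notTrIneq} and the lattice-type finiteness of $\sum_{\gamma}(1+|\gamma|)^{-N}$ furnished by Corollary~\ref{corthm_InclusionLattice}.
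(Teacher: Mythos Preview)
Your proposal is correct and follows essentially the same approach as the paper, which treats Lemma~\ref{lem_IntKernel} as an immediate consequence of Lemma~\ref{lem_convolution} (yielding the integral kernel formula) together with the periodisation machinery of Lemma~\ref{lem_periodisation} (whose proof already contains the combination of the modified triangle inequality~\eqref{eq_notTrIneq} with Corollary~\ref{corthm_InclusionLattice} that you use for joint smoothness). You have simply made explicit the two-variable version of that smoothness argument.
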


Invariant differential operators keep many of their features from $G$ to $M$:
\begin{lemma}
Let $T$ be a linear continuous map $\cS'(G)\to \cS'(G)$ or $\cD'(G)\to \cD'(G)$.
We assume that $T$ coincide with   a smooth differential operator 	on $G$ that is invariant under $\Gamma$.
Then $T_M$ is a smooth differential operator on $M$.

If $T$ is hypo-elliptic, then so is $T_M$.

If $T$ is symmetric and positive on $\cS(G)$ in the sense that 
$$
\forall F_1,F_2\in \cS(G) \quad \int_G TF_1(x)\, \overline{ F_2(x)} dx = \int_G F_1(x)\, \overline{T F_2(x)} dx
\quad\mbox{and}\quad
\int_G TF_1(x)\, \overline{ F_1(x)} dx \geq 0, 
$$
then $T_M$ is also symmetric and positive on $\cD(M)$.
Moreover, 
$T$ is essentially self-adjoint on $L^2(G)$ and $T_M$ is essentially self-adjoint on $L^2(M)$.
\end{lemma}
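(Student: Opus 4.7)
The plan is to handle the four claims in order, using locality of differential operators, the disintegration formula~\eqref{eq_dxddotx} and the $\Gamma$-invariance of $T$.

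\emph{Differentiability and hypoellipticity.} Differential operators are local and the projection $\pi\colon G\to M$ is a local diffeomorphism, so for any $\dot x\in M$ I pick an open $\tilde U\subset G$ mapped diffeomorphically onto a neighborhood $U$ of $\dot x$ and define $T_M$ on $U$ by lifting a function to $\tilde U$, applying $T$, and projecting back. $\Gamma$-invariance of $T$ makes the construction independent of the choice of lift, and in local coordinates presents $T_M$ as a smooth differential operator with the same local expression as $T$. Hypoellipticity then transfers by the same local trick: $T_Mf$ smooth on $U$ is equivalent to $TF_G$ smooth on $\pi^{-1}(U)$, so hypoellipticity of $T$ gives $F_G$ smooth on $\pi^{-1}(U)$, i.e.\ $f$ smooth on $U$.

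\emph{Symmetry and positivity of the form.} Using Lemma~\ref{lem_periodisation} I write $F_j=(\phi_j^\Gamma)_M$ with $\phi_j\in\cD(G)$. Applying $T$ termwise to $\phi_j^\Gamma=\sum_\gamma\phi_j(\gamma\,\cdot)$ and using $\Gamma$-invariance gives $T\phi_j^\Gamma=(T\phi_j)^\Gamma$, hence $T_MF_j=((T\phi_j)^\Gamma)_M$, and~\eqref{eq_dxddotx} produces the key identity
$$
\int_M T_MF_1\,\overline{F_2}\,d\dot x
\;=\;\int_G T\phi_1\,\overline{\phi_2^\Gamma}\,dx
\;=\;\sum_{\gamma\in\Gamma}\int_G T\phi_1(x)\,\overline{\phi_2(\gamma x)}\,dx.
$$
Inside each term both $\phi_1$ and $\phi_2(\gamma\,\cdot)$ lie in $\cD(G)\subset\cS(G)$, so the symmetry hypothesis on $\cS(G)$ applies termwise and, after invoking $\Gamma$-invariance of $T$ once more, reorganises the right-hand side into $\int_M F_1\,\overline{T_MF_2}\,d\dot x$. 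For positivity a termwise argument fails because the terms with $\gamma\ne e$ have no fixed sign, and this is the step I expect to be the main obstacle. My plan is to average positivity of $T$ along a F\o lner sequence $(F_R)_R$ of finite subsets of $\Gamma$, which exists because $\Gamma$ is a nilpotent discrete group, hence amenable (one may take $F_R=\Gamma\cap B_R$ for a Euclidean ball $B_R$ in the coordinates of Proposition~\ref{prop_FundDom}). For $\phi\in\cD(G)$ with $(\phi^\Gamma)_M=F$ and $\psi_R:=\sum_{\gamma\in F_R}\phi(\gamma\,\cdot)\in\cD(G)$, positivity of $T$ on $\cS(G)$ gives $\langle T\psi_R,\psi_R\rangle_{L^2(G)}\ge 0$; expanding using $\Gamma$-invariance and the substitution $y=\gamma x$ yields
$$
\langle T\psi_R,\psi_R\rangle_{L^2(G)}
\;=\;\sum_{\mu\in\Gamma}|F_R\cap\mu^{-1}F_R|\int_G T\phi(y)\,\overline{\phi(\mu y)}\,dy,
$$
which is a finite sum in $\mu$ since only $\mu$ with $\mathrm{supp}\,\phi(\mu\,\cdot)\cap\mathrm{supp}\,T\phi\ne\emptyset$ contribute. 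Dividing by $|F_R|$ and using the F\o lner property $|F_R\cap\mu^{-1}F_R|/|F_R|\to 1$ shows that the left-hand side converges to $\sum_\mu\int_G T\phi\,\overline{\phi(\mu\,\cdot)}\,dy=\int_G T\phi\,\overline{\phi^\Gamma}\,dy=\langle T_MF,F\rangle_{L^2(M)}$, which is therefore non-negative.

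\emph{Essential self-adjointness.} For $T_M$ this follows from the previous parts via the basic criterion. $T_M$ is positive, symmetric and hypoelliptic on the compact manifold $M$; any $u\in\ker(T_M^*+1)\subset L^2(M)$ is smooth by hypoellipticity of $T_M+1$, so $0=\langle(T_M+1)u,u\rangle\ge\|u\|_{L^2(M)}^2$ forces $u=0$. Hence $(T_M+1)$ has dense range and $T_M$ is essentially self-adjoint. Essential self-adjointness of $T$ on $L^2(G)$ is not a formal consequence of symmetry and positivity alone; it will be invoked from the extra structure of the operators to which the lemma is applied (for positive Rockland operators on graded groups it is classical, recalled in Section~\ref{sec_GR}).
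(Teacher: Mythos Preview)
Your treatment of the local claims (differential operator, hypoellipticity) matches the paper's: both of you use that $\pi:G\to M$ is a local diffeomorphism, so the operator and its hypoellipticity transfer chart by chart.

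For positivity, your F\o lner--averaging argument is correct and is genuinely more careful than the paper's sketch, which simply asserts that symmetry and positivity follow ``from the charts'' without addressing the point you correctly flag: a term-by-term expansion of $\langle T_M F,F\rangle_{L^2(M)}$ over $\Gamma$ has no reason to be termwise non-negative. Your argument---applying positivity of $T$ to the compactly supported partial sums $\psi_R=\sum_{\gamma\in F_R}\phi(\gamma\,\cdot)$, expanding via $\Gamma$-invariance and unimodularity, and passing to the limit using that only finitely many $\mu\in\Gamma$ contribute---is a clean way to close this gap. The paper does not supply an alternative mechanism; it treats the whole lemma as ``classical'' and leaves the proof as a sketch.

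On essential self-adjointness there is a small mismatch with the lemma as literally stated. Your argument for $T_M$ invokes hypoellipticity of $T_M+1$ to promote an $L^2$ element of $\ker(T_M^*+1)$ to a smooth function, but hypoellipticity is the hypothesis of a \emph{different} clause of the lemma; the symmetric/positive clause does not assume it. The paper's sketch is no better here---it just says ``positivity implies density of the ranges of $T\pm i$'' without justification, which is false for arbitrary positive symmetric operators. Your closing remark that essential self-adjointness of $T$ on $L^2(G)$ genuinely requires extra structure (supplied in Section~\ref{sec_GR} for Rockland operators) is exactly right, and the same caveat applies to $T_M$: in the paper's applications all three hypotheses (differential, hypoelliptic, symmetric positive) hold simultaneously, so your argument goes through there, but as a proof of the lemma in the generality stated both your version and the paper's are incomplete on this point.
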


These arguments are very classical and we only sketch their proofs.  

\begin{proof}[Sketch of the proof]
	The manifold $M$ have natural charts given by the descriptions of the fundamental domains in Proposition \ref{prop_FundDom}.
They imply that  $T_M$ is a smooth differential operator on $M$ and  also that if $T$ is hypo-elliptic (resp. symmetric and positive), then so is $T_M$.
The self-adjointness follows from the fact that the positivity 
imply the density of the ranges of the operators $T+ i {\rm I}$ and $T-i{\rm I}$, and $T_M+ i {\rm I}$ and $T_M-i{\rm I}$.
\end{proof}

\section{Positive Rockland operators on nilmanifolds}
\label{sec_GR}

In this section, we review positive Rockland operators on  graded Lie groups and their corresponding operators on the associated nilmanifolds.  

\subsection{Preliminaries on graded nilpotent Lie group}

In the rest of the paper, 
we will be concerned with graded Lie groups.
References on this subject includes \cite{folland+stein_82} and 
\cite{R+F_monograph}.

A graded Lie group $G$  is a connected and simply connected 
Lie group 
whose Lie algebra $\fg$ 
admits an $\bN$-gradation
$\fg= \oplus_{\ell=1}^\infty \fg_{\ell}$
where the $\fg_{\ell}$, $\ell=1,2,\ldots$, 
are vector subspaces of $\fg$,
almost all equal to $\{0\}$,
and satisfying 
$[\fg_{\ell},\fg_{\ell'}]\subset\fg_{\ell+\ell'}$
for any $\ell,\ell'\in \bN$.
This implies that the group $G$ is nilpotent.
Examples of such groups are the Heisenberg group
 and, more generally,
all stratified groups (which by definition correspond to the case $\fg_1$ generating the full Lie algebra $\fg$).

\subsubsection{Dilations and homogeneity}
\label{subsubsec_dilations}
For any $r>0$, 
we define the  linear mapping $D_r:\fg\to \fg$ by
$D_r X=r^\ell X$ for every $X\in \fg_\ell$, $\ell\in \bN$.
Then  the Lie algebra $\fg$ is endowed 
with the family of dilations  $\{D_r, r>0\}$
and becomes a homogeneous Lie algebra in the sense of 
\cite{folland+stein_82}.
We re-write the set of integers $\ell\in \bN$ such that $\fg_\ell\not=\{0\}$
into the increasing sequence of positive integers
 $\upsilon_1,\ldots,\upsilon_n$ counted with multiplicity,
 the multiplicity of $\fg_\ell$ being its dimension.
 In this way, the integers $\upsilon_1,\ldots, \upsilon_n$ become 
 the weights of the dilations.

 We construct a basis $X_1,\ldots, X_n$  of $\fg$ adapted to the gradation,
by choosing a basis $\{X_1,\ldots, X_{n_1}\}$ of $\fg_1$ (this basis is possibly reduced to $\emptyset$), then 
$\{X_{n_1+1},\ldots,  X_{n_1+n_2}\}$ a basis of $\fg_2$
(possibly $\emptyset$ as well as the others).
We have $D_r X_j =r^{\upsilon_j} X_j$, $j=1,\ldots, n$.
We may identify $G$ with $\bR^n$ via the exponential mapping:
$x=(x_1,\ldots,x_n)\mapsto {\rm Exp}(x_1 X_1+\ldots+x_n X_n)$.

 The associated group dilations are defined by
$$
D_r(x)=
rx
:=(r^{\upsilon_1} x_{1},r^{\upsilon_2}x_{2},\ldots,r^{\upsilon_n}x_{n}),
\quad x=(x_{1},\ldots,x_n)\in G, \ r>0.
$$
In a canonical way,  this leads to the notions of homogeneity for functions, distributions and operators and we now give a few important examples. 

The Haar measure is $Q$-homogeneous where 
$$
Q:=\sum_{\ell\in \bN}\ell \dim \fg_\ell=\upsilon_1+\ldots+\upsilon_n,
$$
 is called the homogeneous dimension of $G$.

Identifying the elements of $\fg$ with left-invariant vector fields, 
each  $X_j$ is a homogeneous differential operator of degree $\upsilon_j$. 
More generally, the differential operator 
$$
X^{\alpha}=X_1^{\alpha_1}X_2^{\alpha_2}\cdots
X_{n}^{\alpha_n}, \quad \alpha\in \bN_0^n$$
is homogeneous with degree
$$
[\alpha]:=\alpha_1 \upsilon_1 + \cdots + \alpha_n \upsilon_n.
$$

 \begin{remark}
 \label{rem_natural_dilation}
 The natural dilation associated with the gradation $\fg= \oplus_{\ell=1}^\infty \fg_{\ell}$ is the one defined above via $D_r X=r^\ell X$ for every $X\in \fg_\ell$, $\ell\in \bN$.
 Note that we could also have defined other dilations, for instance
 $D_r X=r^{m \ell} X$ for every $X\in \fg_\ell$, $\ell\in \bN$, where $m$ is a fixed positive number. 
 If $m$ is an integer, this means considering the new gradation 
 $\fg= \oplus_{\ell=1}^\infty \tilde \fg_{\ell}$ with $\tilde \fg_{m \ell} = \fg_\ell$ and $\tilde \fg_{\ell'}=0$ when $\ell'$ is not a multiple of $\alpha$. 

Note that  the homogeneous dimension and homogeneous degrees for this new gradation are $\tilde Q=m Q$ and $[\alpha]\,\tilde{}=m[\alpha]$, so that the ratio $Q / [\alpha]$ is in fact independent of $m$. 
 Furthermore, choosing $m$ even implies that we may choose any degree $[\alpha]$ to be even. 
  \end{remark}

\subsubsection{Homogeneous quasi-norms}
 
An important class of homogeneous maps are the homogeneneous quasi-norms, 
that is, a $1$-homogeneous non-negative map $G \ni x\mapsto \|x\|$ which is symmetric and definite in the sense that $\|x^{-1}\|=\|x\|$ and $\|x\|=0\Longleftrightarrow x=0$.
In fact, all the homogeneous quasi-norms are equivalent in the sense that if $\|\cdot\|_1$ and $\|\cdot\|_2$ are two quasi-norms, then 
$$
\exists C>0 \qquad \forall x\in G
\qquad C^{-1} \|x\|_1 \leq \|x\|_2 \leq C \|x\|_1.
$$
Examples may be constructed easily, for instance 
$$
\|x\| = (\sum_{j=1}^n |x_j|^{N/\upsilon_j})^{-N} \ \mbox{for any}\ N\in \bN,
$$
 with the convention above.

An important property of homogeneous quasi-norms is that they satisfy the triangle inequality up to a constant:
$$
\exists C>0 \qquad \forall x,y\in G
\qquad \|xy\| \leq C (\|x\|+\|y\|).
$$
However,   it is possible to construct a homogeneous quasi-norm on $G$ which  yields a distance on $G\sim \bR^n$ in the sense that  the triangle inequality above is satisfied with constant 1 \cite[Theorem 3.1.39]{R+F_monograph}.

\subsection{Discrete co-compact subgroups of $G$}
\label{subsec_Gamma4G}
If the structural constants $c_{i,j,k}$ from $[X_i,X_j] = \sum_{k} c_{i,j,k} X_k$ are all rational, then 
then there exists a positive integer $K\in \bN$ such that the set 
$$
\Gamma :=  {\rm Exp} (K\bZ  X_n)  \ldots {\rm Exp} (K\bZ  X_1)=  {\rm Exp} (K\bZ  X_1)  \ldots {\rm Exp} (K\bZ  X_n)
$$	
 is a discrete co-compact subgroup of $G$ as a consequence of Theorem \ref{thm_CG} and the following statement:
 
   \begin{lemma}
  \label{lem_Xj_Mbasis}
  Let $G$ be a graded Lie group.
  The basis constructed in Section \ref{subsubsec_dilations} but ordered as $X_n,\ldots, X_1$ is a strong Malcev basis of $\fg$.	
  \end{lemma}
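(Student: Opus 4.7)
The plan is to exploit the key feature of the basis $X_1,\ldots,X_n$ from Section \ref{subsubsec_dilations}: it is built by concatenating a basis of $\fg_1$, then of $\fg_2$, and so on. In particular, the associated weights satisfy $\upsilon_1\leq \upsilon_2\leq \cdots\leq \upsilon_n$. Reading the basis in reverse therefore amounts to listing the $X_j$ from the largest weight down to the smallest, and the strong Malcev property should fall out of the fact that bracketing strictly raises the grading level: bracketing two ``high-weight'' basis vectors produces something of even higher weight, which still lies in the span of the high-weight basis vectors.

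Concretely, I would set $Y_j:=X_{n-j+1}$ and $V_m:={\rm span}(Y_1,\ldots,Y_m)={\rm span}(X_{n-m+1},\ldots,X_n)$, and verify $[V_m,V_m]\subset V_m$ for every $m=1,\ldots,n$. The case $m=n$ is immediate since $V_n=\fg$. For $1\leq m<n$, I would take basis vectors $X_i,X_j\in V_m$, i.e.\ $i,j\geq n-m+1$, and use the gradation to place $[X_i,X_j]\in \fg_{\upsilon_i+\upsilon_j}$. Expanding this bracket in the basis $X_1,\ldots,X_n$, only those basis vectors $X_k$ with $\upsilon_k=\upsilon_i+\upsilon_j$ can appear. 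From $\upsilon_i,\upsilon_j\geq \upsilon_{n-m+1}\geq 1$ I would derive $\upsilon_k\geq 2\upsilon_{n-m+1}>\upsilon_{n-m}$, and then use monotonicity of the weights to conclude that $k\geq n-m+1$, hence $X_k\in V_m$.

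The only point requiring a moment of care is the final inequality: one really needs $\upsilon_i+\upsilon_j$ to strictly exceed $\upsilon_{n-m}$ rather than merely $\upsilon_{n-m+1}$, since in general $\upsilon_{n-m}=\upsilon_{n-m+1}$ can occur (the weights may repeat inside a single $\fg_\ell$). This is handled simply by observing that each weight is a positive integer, so $2\upsilon_{n-m+1}\geq \upsilon_{n-m+1}+1\geq \upsilon_{n-m}+1$. Apart from this mild bookkeeping, no genuine obstacle arises; the statement is a direct consequence of how the basis was constructed to be compatible with the gradation. It is worth noting that the same argument in fact shows the stronger conclusion that each $V_m$ is an ideal of $\fg$, because the estimate $\upsilon_k=\upsilon_j+\upsilon_i>\upsilon_{n-m}$ still holds when $X_j$ is replaced by an arbitrary basis vector, not one necessarily in $V_m$.
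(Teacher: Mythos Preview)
Your proposal is correct and follows essentially the same approach as the paper: both arguments rest on the observation that bracketing with any $X_j$ strictly raises the grading level, so the bracket lands in the span of basis vectors with strictly larger weight, hence with strictly larger index. The paper's proof is more terse and goes directly to the ideal property (bracketing $X_m$ with an arbitrary $X\in\fg$), while you first verify the subalgebra property and then note that the same estimate yields the ideal property; the underlying idea is identical.
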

 \begin{proof}[Proof of Lemma \ref{lem_Xj_Mbasis}]
The properties of the dilations implies that the Lie bracket of an element $X\in \fg$ with $X_m$ is in the linear span of $X_j$'s with weights $>m$, hence in $\bR X_{m+1} \oplus \ldots \oplus \bR X_n$.
 \end{proof}

\begin{remark}
\begin{itemize}
\item In what follows, we do not require that $\Gamma$ is constructed out of a basis constructed in Section \ref{subsubsec_dilations}.
\item Although we will not need this property, we observe that the same proof yields the fact that $X_n,\ldots, X_1$ is a strong Malcev basis of $\fg$ through the sequence of ideals $\fg_{(1)}\subset \fg_{(2)} \subset \ldots \subset \fg_{(k)} =\fg$ defined as follows. 
Re-labeling the weights as a sequence of strictly decreasing integers
$$
\{1\leq \upsilon_1 \leq \ldots \leq \upsilon_n\}
=\{\upsilon_n=\omega_1 > \ldots > \omega_k =\upsilon_1\},
$$
$\fg_{(j)}$ denotes
the vector space  spanned by the elements in $\fg$  with weights $\geq \omega_j$ for each $j=1,\ldots, k$.
\end{itemize}
\end{remark}

We will need the following property which is similar to Corollary \ref{corthm_InclusionLattice} but for homogeneous quasi-norms:

\begin{lemma}
	\label{lem_sum}
Let $\Gamma$ be a discrete co-compact subgroup of a graded group $G$. 
	Then for any $N>\upsilon_n n$ and any homogeneous quasi-norm $\|\cdot\|$ on $G$, the sum 
	$\sum_{\gamma \in \Gamma\setminus\{0\} } \|\gamma\|^{-N}$ finite.
\end{lemma}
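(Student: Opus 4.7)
The plan is to reduce the question to Corollary \ref{corthm_InclusionLattice} by comparing the homogeneous quasi-norm $\|\cdot\|$ with an ordinary (Euclidean) norm $|\cdot|$ on $\bR^n\sim G$. Since the lemma's conclusion depends only on convergence of a positive series, and since all homogeneous quasi-norms on $G$ are equivalent, I may first replace $\|\cdot\|$ by the convenient quasi-norm
$$
\|x\|_* := \max_{j=1,\ldots,n} |x_j|^{1/\upsilon_j}, \qquad x=(x_1,\ldots,x_n)\in G\sim\bR^n,
$$
losing only a multiplicative constant. Homogeneity $\|D_r x\|_* = r\|x\|_*$ is immediate, and symmetry/definiteness are clear.

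Next I would compare $\|\cdot\|_*$ with the $\ell^\infty$ norm $|x|_\infty:=\max_j|x_j|$ on $\bR^n$. The key observation is that $|x_j| = (|x_j|^{1/\upsilon_j})^{\upsilon_j} \leq \|x\|_*^{\upsilon_j}$, and since every weight satisfies $\upsilon_j \leq \upsilon_n$, we get
$$
|x|_\infty \leq \max(1,\|x\|_*^{\upsilon_n}) \qquad \text{for all } x\in G.
$$
Equivalently, there is a constant $c>0$ such that whenever $\|x\|_*\geq 1$ we have $\|x\|_*\geq c\,|x|_\infty^{1/\upsilon_n}$, hence
$$
\|x\|_*^{-N} \leq c^{-N}\, |x|_\infty^{-N/\upsilon_n}, \qquad \|x\|_*\geq 1.
$$

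I would then split the sum as
$$
\sum_{\gamma\in\Gamma\setminus\{0\}} \|\gamma\|_*^{-N}
= \sum_{\substack{\gamma\in\Gamma\setminus\{0\}\\ \|\gamma\|_*<1}} \|\gamma\|_*^{-N}
+ \sum_{\substack{\gamma\in\Gamma\\ \|\gamma\|_*\geq 1}} \|\gamma\|_*^{-N}.
$$
Because $\Gamma$ is a discrete subset of $G$ and $\{x\in G:\|x\|_*<1\}$ is a relatively compact neighbourhood of $0$, the first sum is a finite sum of positive numbers, hence finite. For the second sum, the comparison above bounds it by a constant multiple of $\sum_{\gamma\in\Gamma}(1+|\gamma|_\infty)^{-N/\upsilon_n}$, which is finite by Corollary \ref{corthm_InclusionLattice} as soon as $N/\upsilon_n>n$, i.e.\ $N>\upsilon_n n$. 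This yields the claim.

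The main (and really only) subtlety is that the inequality between $\|\cdot\|_*$ and $|\cdot|_\infty$ goes the wrong way near the origin, where $\|x\|_*$ can be much larger than a linear function of $|x|_\infty$; this is precisely why one has to cut the sum at $\|\gamma\|_*=1$. Discreteness of $\Gamma$ removes that issue by guaranteeing only finitely many lattice points in any bounded set.
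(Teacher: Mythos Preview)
Your proof is correct and follows essentially the same approach as the paper: reduce to the explicit quasi-norm $\|x\|_*=\max_j|x_j|^{1/\upsilon_j}$, observe that $\|x\|_*\geq 1$ implies $\|x\|_*\geq |x|_\infty^{1/\upsilon_n}$, and then invoke Corollary~\ref{corthm_InclusionLattice} with exponent $N/\upsilon_n>n$. Your treatment is in fact slightly more explicit than the paper's, which simply asserts that the contribution from $\|\gamma\|<1$ may be ignored; your appeal to discreteness of $\Gamma$ and relative compactness of the quasi-ball makes that step transparent.
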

\begin{proof}
As all the homogeneous quasi-norms are equivalent, 
it suffices to prove the result for the one given by $\|x\| = \max_{j=1,\ldots,n} |x_j|^{1/\upsilon_j}$, where we have written $x={\rm Exp} \sum_{j=1}^n x_j X_j$, for the basis $X_1,\ldots, X_n$ adapted to the gradation and constructed in Section \ref{subsubsec_dilations}.
We observe that it suffices to show that the sum over $\gamma\in \Gamma$ with $\|\gamma\|\geq 1$ is finite, and that 
for any $x\in G$
$$
\|x\|\geq 1\Longrightarrow
\|x\| \geq (\max_{j=1,\ldots,n} |x_j|)^{1/\upsilon_n}.
$$
Now $x\mapsto \max_{j=1,\ldots,n} |x_j|$ is a norm on $\bR^n$, 
and we can conclude with Corollary \ref{corthm_InclusionLattice}.
\end{proof}

\subsection{Positive Rockland operators on $G$}
Let us briefly review the definition and main properties of positive Rockland operators. 
References on this subject includes \cite{folland+stein_82} and 
\cite{R+F_monograph}.

\subsubsection{Definitions}
\label{subsubsec_posRdef}

A \emph{Rockland operator}
 $\cR$ on $G$ is 
a left-invariant differential operator 
which is homogeneous of positive degree and satisfies the Rockland condition, that is, 
for each unitary irreducible continuous representation $\pi$ on $G$,
except for the trivial representation, 
the operator $\pi(\cR)$ is injective on the space  $\mathcal H_\pi^\infty$ of smooth vectors of the infinitesimal representation;
we have kept the same notation for the corresponding infinitesimal representation
which acts on the universal enveloping algebra $\fU(\fg)$ of the Lie algebra of the group.

Recall \cite{HelfferNourrigat79}
that Rockland operators are hypoelliptic.
In fact, they may  be equivalently characterised as the left-invariant homogeneous differential operators which are hypoelliptic. 
If this is the case, then $\cR + \sum_{[\alpha]< \nu}c_\alpha X^\alpha$ where $c_\alpha\in \bC$ and $\nu$ is the homogeneous degree of $\cR$ is hypoelliptic.

A Rockland operator is \emph{positive} when 
$$
\forall f \in \cS(G),\qquad
\int_G \cR f(x) \ \overline{f(x)} dx\geq 0.
$$

Any sub-Laplacian with the sign convention 
\begin{equation}
\label{eq_cL}
\cL=-(Z_1^2+\ldots+Z_{n'}^2),
\end{equation}
 of a stratified Lie group  is a positive Rockland operator; here $Z_1,\ldots,Z_{n'}$ form a basis of the first stratum $\fg_1$.
The reader familiar with the Carnot group setting may 
 view positive Rockland operators as generalisations of the canonical sub-Laplacians on the Carnot groups.
Positive Rockland operators are easily constructed on any graded Lie group \cite[Section 4.2]{R+F_monograph}.

A positive Rockland operator is essentially self-adjoint on $L^2(G)$ and we keep the same notation for its self-adjoint extension.
Its spectrum is included in $[0,+\infty)$ and the point 0 may be neglected in its spectrum.

\subsubsection{Spectral multipliers in $\cR$}

If $\psi:\bR^+\to \bC$ is a measurable function,
the spectral multiplier $\psi(\cR)$ is well defined as an  operator (possibly unbounded) on $L^2(G)$.
Its domain  is the space of function $\psi \in L^2(G)$ such that $\int_0^{+\infty} |\psi(\lambda)|^2 d(E_\lambda \psi,\psi)$ is finite
where $E$ denotes the spectral measure of $\cR= \int_0^{+\infty} \lambda dE_\lambda.$
For instance, if the function $\psi$ is bounded on $\bR^+$, then the operator $\psi(\cR)$ is bounded on $L^2(G)$.
However, we will be concerned with multipliers $\psi$ that may not be in $L^\infty (\bR^+)$.
If the domain of $\psi(\cR)$  contains $\cS(G)$ 
and defines a continuous map $\cS(G)\to \cS'(G)$, then 
it is invariant under right-translation; by the Schwartz kernel theorem, it admits a right-convolution kernel $\psi(\cR)\delta_0 \in \cS'(G)$ which satisfies the following homogeneity property:
\begin{equation}
\label{eq_homogeneitypsiR}	
 \psi(r^\nu \cR) \delta_0 (x) =r^{-Q} \psi(\cR)\delta_0(r^{-1}x),
 \quad x\in G.
\end{equation}

The following statement is a consequence of the famous result due to Hulanicki \cite{hulanicki}:
\begin{theorem}[Hulanicki's theorem]
\label{thm_hula}
	Let $\cR$ be a positive Rockland operator on $G$.
	If $\psi\in \cS(\bR)$ then $\psi(\cR)\delta_0 \in \cS(G)$.
\end{theorem}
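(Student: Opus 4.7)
The plan is to reduce the theorem to the heat-kernel case $\psi(\lambda)=e^{-\lambda}$, and then extend by a density-and-continuity argument within the functional calculus of $\cR$. First, I would show that $p_t:=e^{-t\cR}\delta_0$ lies in $\cS(G)$ for every $t>0$. Since $\cR$ is left-invariant, $e^{-t\cR}$ acts as right-convolution by a distribution $p_t$. Hypoellipticity of $\cR$ (a consequence of the Rockland condition via Helffer--Nourrigat) applied to the parabolic operator $\partial_t+\cR$ yields smoothness of $p_t(x)$ on $(0,\infty)\times G$. The homogeneity identity \eqref{eq_homogeneitypsiR} applied to $\psi(\lambda)=e^{-\lambda}$ gives $p_t(x)=t^{-Q/\nu}p_1(t^{-1/\nu}\cdot x)$, reducing matters to the single kernel $p_1$. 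Fast off-diagonal decay of $p_1$ and of all $X^\alpha p_1$ can be obtained by Davies' exponential-perturbation method: conjugating $e^{-t\cR}$ by a bounded weight $e^{\varepsilon\phi}$ with $\phi$ Lipschitz with respect to a homogeneous quasi-norm, optimising $\varepsilon$ in the resulting energy estimates, and propagating the Gaussian-type bound to derivatives via $X^\alpha p_1=(X^\alpha e^{-\cR/2})\ast p_{1/2}$.

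Next, I would promote this to arbitrary $\psi\in\cS(\bR)$. Let $\cE$ be the linear span of the functions $\lambda\mapsto \lambda^k e^{-t\lambda}$ for $k\in\bN_0$, $t>0$. Each $\psi\in\cE$ gives a Schwartz kernel, namely $\cR^k p_t\in\cS(G)$, and $\cE$ is dense in $\cS([0,\infty))$. Define the linear map $\Phi:\cS([0,\infty))\to\cS'(G)$ by $\Phi(\psi)=\psi(\cR)\delta_0$; the goal is to prove that $\Phi$ is continuous into $\cS(G)$. On the derivative side, $\cR^k\Phi(\psi)=\Phi(\lambda^k\psi)$ combined with the Plancherel formula for nilpotent Lie groups, whose Plancherel measure is homogeneous under the dilations transported via $\cR$, provides $L^2$-bounds on $X^\alpha\kappa$ by Schwartz seminorms of $\psi$; sub-elliptic Sobolev embedding then yields pointwise bounds. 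For the spatial-decay factor $(1+\|x\|)^N|\kappa(x)|$, one relates multiplication by coordinate monomials $x^\beta$ to elements of $\fU(\fg)$ acting on kernels, via Baker--Campbell--Hausdorff expansions that reduce $x^\beta\kappa$ to a finite combination of kernels of the form $\Phi(\tilde\psi)$ with $\tilde\psi\in\cS$ built from derivatives of $\psi$.

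The main obstacle is the spatial-decay step: unlike differentiation, multiplication by coordinate polynomials does not arise cleanly from the spectral calculus of $\cR$, because the non-commutativity of $G$ forces correction terms in any Leibniz-type expansion and mixes weights under the grading. Controlling these corrections uniformly in $\psi$ (so that Schwartz seminorms are not blown up) is the heart of Hulanicki's original argument and is where the Rockland hypothesis enters most essentially, through the sub-elliptic estimates needed to close the induction on the order of the coordinate monomial.
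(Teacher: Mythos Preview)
The paper does not prove Theorem~\ref{thm_hula}: it is quoted as a known result due to Hulanicki~\cite{hulanicki}, with no argument given beyond the remark that the Schwartz property of the heat kernels $p_t=e^{-t\cR}\delta_0$ ``is in fact used in the proof of Hulanicki's Theorem.'' There is therefore no paper proof to compare your proposal against.

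That said, your overall plan is sound and consistent with the paper's remark: establish first that $p_t\in\cS(G)$ (via parabolic hypoellipticity for smoothness and a Davies-type argument for decay), and then bootstrap to general $\psi\in\cS(\bR)$. Your honest identification of the spatial-decay step as the crux is correct: the derivative bounds $X^\alpha\kappa\in L^2$ do follow from functional calculus and sub-elliptic estimates, but the claim that $x^\beta\kappa$ can be rewritten as a finite combination of kernels $\Phi(\tilde\psi)$ with $\tilde\psi$ built from derivatives of $\psi$ is not justified in your sketch and is, in general, not literally true --- there is no simple analogue of the Euclidean identity $x^\beta\widehat\psi=\widehat{D^\beta\psi}$ for the $\cR$-calculus on a non-abelian $G$. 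Hulanicki's original argument does not proceed this way; it exploits instead that $\cS(\bR)$ is contained in a weighted symbol algebra for which one can control the action on $\cS(G)$ through the heat semigroup, together with commutator estimates between $\cR$ and multiplication by polynomials of bounded homogeneous degree. Your density-and-continuity framework is reasonable, but as written the closing of the spatial-decay induction remains a genuine gap rather than a routine detail.
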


For instance,  the heat kernels 
$$
p_t:=e^{-t\cR}\delta_0, \quad t>0,
$$
 are Schwartz - although this property is in fact used in the proof of Hulanicki's Theorem. 

\medskip

Hulanicki's theorem in Theorem \ref{thm_hula} and the homogeneity property in \eqref{eq_homogeneitypsiR} imply that $\psi(r\cR)$ is an approximate identity 
(see \cite[Section 3.1.10]{R+F_monograph}):
\begin{lemma}
\label{lem_appId}
Let $\psi\in \cS(\bR)$. 
Then $\psi(r\cR)$, $r>0$, is a multiple of an approximate identity in the sense that 
we have the convergence 
$\psi(r\cR) f \longrightarrow \psi(0) f$ as $r\to 0$
in $\cS(G)$ for any $f\in \cS(G)$ and in $\cS'(G)$ for any $f\in \cS'(G)$.
\end{lemma}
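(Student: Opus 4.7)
The plan is to realise $\psi(r\cR)$ as convolution with a Schwartz kernel that is a rescaling of the fixed kernel $\kappa:=\kappa_1=\psi(\cR)\delta_0$, identify its total mass, and then run a standard approximate-identity argument in $\cS(G)$, finally promoting the result to $\cS'(G)$ by duality.

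First, by Hulanicki's theorem (Theorem \ref{thm_hula}), the convolution kernel $\kappa_r:=\psi(r\cR)\delta_0$ belongs to $\cS(G)$ for every $r>0$, and $\psi(r\cR)f = f*\kappa_r$ for $f\in \cS(G)$. Applying \eqref{eq_homogeneitypsiR} with $r$ replaced by $r^{1/\nu}$ gives
$$\kappa_r(x)= r^{-Q/\nu}\, \kappa\!\left(D_{r^{-1/\nu}}x\right), \qquad x\in G.$$
By the $Q$-homogeneity of the Haar measure, the total mass $\int_G \kappa_r(x)\,dx = \int_G \kappa(x)\,dx$ is independent of $r$. I would then identify this mass as $\psi(0)$: the trivial representation $\pi_0$ sends $\cR$ to $0$, so by the spectral calculus $\pi_0(\psi(\cR)\delta_0)=\psi(0)$; equivalently, applying the right-convolution operator to the constant function $\mathbf 1$ gives $\psi(\cR)\mathbf 1 = \psi(0)\mathbf 1$, whence $\int_G \kappa(x)\,dx = \psi(0)$.

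The heart of the matter is the convergence $f*\kappa_r \to \psi(0)f$ in $\cS(G)$ when $f\in \cS(G)$. Since the $X^\alpha$ are left-invariant, $X^\alpha(f*\kappa_r) = (X^\alpha f)*\kappa_r$, so it suffices to estimate the weighted sup-norms of
$$(f*\kappa_r)(x)-\psi(0)f(x) = \int_G \bigl(f(xy^{-1})-f(x)\bigr)\, \kappa_r(y)\, dy.$$
One splits the integral into $\|y\|\le \eps$ and $\|y\|>\eps$. On the first region, the smoothness of $f$ together with $\int|\kappa_r|=\int|\kappa|$ controls everything; on the second, the rescaling formula gives $|\kappa_r(y)|\le C_N r^{N/\nu}\|y\|^{-N-Q}$ for any $N\ge 0$ (using the Schwartz decay of $\kappa$), so this contribution is $O(r^{N/\nu})$. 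The weight $(1+\|x\|)^M$ is absorbed using the quasi-triangle inequality, which shows $(1+\|x\|)\lesssim (1+\|xy^{-1}\|)(1+\|y\|)^{s}$, so the $\cS$-seminorms of $f$ at the shifted point $xy^{-1}$ are under control with only polynomial growth in $\|y\|$, which is defeated by the Schwartz decay of $\kappa_r$. This is the main technical step, but it is entirely routine once the kernel $\kappa_r$ is understood via the scaling formula above.

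Finally, for $f\in \cS'(G)$, the operator $\psi(r\cR)$ acts via the transpose rule $\langle f*\kappa_r,\phi\rangle = \langle f,\phi * \tilde\kappa_r\rangle$ for $\phi\in \cS(G)$, where $\tilde\kappa_r(x)=\kappa_r(x^{-1})$ is again Schwartz and satisfies the same scaling and mass formulas with the same kernel $\tilde\kappa$. Applying the $\cS(G)$-convergence established above to $\phi$ and $\tilde\kappa_r$ gives $\phi*\tilde\kappa_r \to \psi(0)\phi$ in $\cS(G)$, and hence $\langle f*\kappa_r,\phi\rangle \to \psi(0)\langle f,\phi\rangle$ for every $\phi\in \cS(G)$, which is exactly the convergence $\psi(r\cR)f\to \psi(0)f$ in $\cS'(G)$. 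The only non-routine ingredient is the seminorm estimate in the previous paragraph; everything else is bookkeeping.
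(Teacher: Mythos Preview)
Your proposal is correct and follows precisely the route the paper indicates: the paper does not give a detailed proof but simply notes (just before the lemma) that Hulanicki's theorem together with the homogeneity property \eqref{eq_homogeneitypsiR} reduce the statement to the standard approximate-identity machinery in \cite[Section 3.1.10]{R+F_monograph}. You have supplied exactly those details --- the rescaling formula for $\kappa_r$, the identification of the total mass via the trivial representation, the near/far splitting in the $\cS$-seminorms, and the duality step for $\cS'(G)$ --- so there is nothing to compare.
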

The  convergence also takes place in $L^p(G)$, $p\in [1,\infty)$, but we will not use this here.  

\medskip

The following result was mainly obtained by Christ for sub-Laplacians on stratified groups \cite[Proposition 3]{Christ91}. As explained below, the proof extends  to positive Rockland operators:
\begin{theorem}
\label{thm_christ}
	Let $\cR$ be a positive Rockland operator of homogeneous degree $\nu$ on $G$. 
	If the measurable function  $\psi:\bR^+\to \bC$ is in  $L^2(\bR^+,  \lambda^{Q/\nu} d\lambda/\lambda)$, 
	then the operator $\psi(\cR)$  defines a continuous map $\cS(G)\to \cS'(G)$ whose convolution kernel  $\psi(\cR)\delta_0$ is in $L^2(G)$. Moreover,   we have 
$$
\|\psi(\cR)\delta_0\|_{L^2(G)}^2
 	=c_0\int_0^\infty |\psi (\lambda)|^2  \lambda^{\frac Q \nu} \frac{d\lambda}{\lambda},
 $$
 where $c_0 = c_0(\cR)$ is a positive constant of $\cR$ and $G$.  \end{theorem}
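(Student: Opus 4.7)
The plan is to establish the formula first for Schwartz $\psi$, using Hulanicki's theorem together with the homogeneity and classification of homogeneous distributions, and then extend by density.

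For $\psi\in\cS(\bR)$, Hulanicki's theorem (Theorem~\ref{thm_hula}) makes $\kappa := \psi(\cR)\delta_0$ belong to $\cS(G)$. Self-adjointness of $\cR$ gives $\psi(\cR)^* = \overline{\psi}(\cR)$, whose convolution kernel is $\kappa^*(x)=\overline{\kappa(x^{-1})}$. Multiplicativity of the functional calculus, $|\psi|^2(\cR)=\psi(\cR)\overline{\psi}(\cR)$, translated into composition of (right-)convolution operators yields $|\psi|^2(\cR)\delta_0 = \kappa^* * \kappa$. Evaluating at $0\in G$ and using unimodularity of $G$ then gives the key identity
$$
\|\kappa\|_{L^2(G)}^2 = (\kappa^* * \kappa)(0) = |\psi|^2(\cR)\delta_0(0).
$$

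Next I would introduce the linear functional $\langle L,\varphi\rangle := \varphi(\cR)\delta_0(0)$ on $\cS(\bR)$. The quantitative form of Hulanicki's theorem makes $\varphi\mapsto\varphi(\cR)\delta_0$ continuous from $\cS(\bR)$ into $\cS(G)$, so $L\in\cS'(\bR)$; moreover $L$ is supported in $[0,+\infty)$ since $\mathrm{spec}(\cR)\subset[0,+\infty)$. Evaluating \eqref{eq_homogeneitypsiR} at $x=0$ gives $\langle L,\varphi(r^\nu\cdot)\rangle = r^{-Q}\langle L,\varphi\rangle$, which makes $L$ a homogeneous tempered distribution of degree $Q/\nu-1$. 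Since $Q/\nu-1>-1$, the space of such distributions supported in $[0,\infty)$ is one-dimensional, spanned by the locally integrable $\lambda^{Q/\nu-1}_+$: any additional piece concentrated at $\lambda=0$ would be a linear combination of derivatives of the Dirac mass at the origin of $\bR$, whose negative-integer degrees cannot equal $Q/\nu-1$. Hence $L=c_0\,\lambda^{Q/\nu-1}\mathbf 1_{\lambda>0}\,d\lambda$ for some $c_0\in\bR$. Positivity of $L(|\psi|^2)=\|\kappa\|_{L^2(G)}^2$, combined with non-triviality (take $\psi(\lambda)=e^{-\lambda}$, so $\psi(\cR)\delta_0=p_1$ is nowhere zero), forces $c_0>0$. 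Combining the two previous steps, for $\psi\in\cS(\bR)$ one obtains
$$
\|\psi(\cR)\delta_0\|_{L^2(G)}^2 = c_0\int_0^\infty|\psi(\lambda)|^2\,\lambda^{Q/\nu}\,\frac{d\lambda}{\lambda}.
$$

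To finish, I would use that $\cS(\bR)$ restricted to $\bR^+$ is dense in $L^2(\bR^+,\lambda^{Q/\nu}d\lambda/\lambda)$. For general $\psi$ in the weighted $L^2$ space, Schwartz approximants $\psi_n$ make $\{\psi_n(\cR)\delta_0\}$ Cauchy in $L^2(G)$ by the isometry just established, with an $L^2$-limit $\kappa$ of the claimed norm. The map $f\mapsto f*\kappa$ is continuous from $\cS(G)$ into $L^2(G)\hookrightarrow\cS'(G)$, and it coincides with the spectral $\psi(\cR)$: on a dense subspace one has $\psi_n(\cR)f=f*\psi_n(\cR)\delta_0$ converging both to $f*\kappa$ and, via spectral calculus, to $\psi(\cR)f$.

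The main obstacle is the rigidity step: pinning down $L$ as a single scalar multiple of $\lambda^{Q/\nu-1}$ on $(0,\infty)$ and excluding a $\delta$-type singularity at the origin. Here the assumption that $\cR$ is positive and homogeneous of positive degree (so $Q/\nu-1>-1$) is essential. The subsequent density extension is standard but requires some care for unbounded $\psi$, where the spectral operator $\psi(\cR)$ is a priori unbounded on $L^2(G)$; its meaning as a continuous map $\cS(G)\to\cS'(G)$ is captured by convolution with the $L^2$-limit $\kappa$.
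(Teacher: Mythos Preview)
Your proof is correct. Both you and the paper exploit the dilation homogeneity \eqref{eq_homogeneitypsiR} to pin down the relevant object up to a scalar, but the two arguments are organised differently. The paper follows Christ: for bounded compactly supported $\psi$ it uses the spectral resolution to write $\|\psi(\cR)\delta_0\|_{L^2}^2 = \int_0^\infty |\psi|^2\,dm$ for a positive $\sigma$-finite Borel measure $m$ on $\bR^+$ (built from $d(E_\lambda\phi_0,\phi_0)$ with $\phi_0=1_{(0,b]}(\cR)\delta_0\in L^2(G)$), and then observes that a measure on the multiplicative group $\bR^+$ homogeneous of degree $Q/\nu$ must be a constant multiple of $\lambda^{Q/\nu}\,d\lambda/\lambda$; positivity of $c_0$ is then automatic. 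Your route instead starts from Schwartz $\psi$ via the tempered distribution $L(\varphi)=\varphi(\cR)\delta_0(0)$, relying on the quantitative (continuous) form of Hulanicki's theorem and on the classification of homogeneous tempered distributions on $\bR$ supported in $[0,\infty)$. Your approach buys Corollary~\ref{cor_thm_christ} essentially for free (it is precisely the statement $L=c_0\,\lambda_+^{Q/\nu-1}$), whereas the paper derives that corollary afterwards by bilinearising. Conversely, the paper's argument needs only that the heat kernel is Schwartz (to get $\phi_0\in L^2$), not the full continuity of $\varphi\mapsto\varphi(\cR)\delta_0$ into $\cS(G)$, and it sidesteps the distribution-theoretic classification by working directly with positive measures. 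One small remark: for $c_0>0$ you only need $p_1\not\equiv 0$ in $L^2(G)$ (immediate from $\int_G p_1=1$), not that $p_1$ is nowhere vanishing.
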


In other words, 
the map $\psi\mapsto \psi(\cR)\delta_0$ is  an isometry from $L^2((0,\infty), c_0 \lambda^{Q/\nu} d\lambda/\lambda)$ onto its image which is a closed subspace of $L^2(G)$.
We will comment on the constant $c_0$ in 
Section \ref{subsubsec_c_0} 

\begin{proof}[Sketch of the proof of Theorem \ref{thm_christ}]
Since the heat kernels are Schwartz, we can argue as in the proof of \cite[Proposition 3]{Christ91}: for any $b>0$, the kernel 
$$
\phi_0 := 1_{(0,b]}(\cR)\delta_0 \ \mbox{is in}\ L^2(G),
$$
and, for every $\psi\in L^\infty(0,b]$, 
the kernel $\psi(\cR)\delta_0$ is in $L^2(G)$ with 
$$
\|\psi(\cR)\delta_0\|_{L^2(G)}^2
=
\int_0^\infty |\psi(\lambda)|^2 d (E_\lambda \phi_0,\phi_0).
$$
This implies the existence and uniqueness of a sigma-finite Borel measure $m$ on $\bR^+$ satisfying 
 $$
\|\psi(\cR)\delta_0\|_{L^2(G)}^2
=
\int_0^\infty |\psi(\lambda)|^2 dm (\lambda),
$$
for every $\psi\in L^\infty(\bR^+)$ with compact support. 
From the uniqueness and the homogeneity property in \eqref{eq_homogeneitypsiR}, 
it follows that the measure $m$ is homogeneous of degree $Q/\nu$ on $\bR^+$.
This means that $\lambda^{-Q/\nu} dm(\lambda)$ is a Haar measure for the multiplicative group $\bR^+$, and is therefore a constant multiple of $d\lambda /\lambda$.
This shows the theorem for any $\psi \in L^\infty(\bR^+)$ with compact support.

Let us now prove the theorem for $\psi$ in $L^2(\bR^+,  \lambda^{Q/\nu} d\lambda/\lambda)$.  
The  first problem is to ensure that $\psi(\cR)\delta_0$ makes sense. 
For this, let $(\psi_j)_{j\in \bN}$ be a sequence of bounded compactly supported functions that converges to $\psi$ in 
$L^2(\bR^+,  \lambda^{Q/\nu} d\lambda/\lambda)$ as $j\to \infty$.
As the statement is shown for the $\psi_j$'s, 
the kernels $\psi_j(\cR)\delta_0$, $j\in \bN$, form a Cauchy sequence in the Hilbert space $L^2(G)$; we denote by $\kappa \in L^2(G)$ its limit.
We observe that 
for each $\phi\in \cS(G)$, Fatou's inequality yields
$$
\int_0^\infty |\psi(\lambda) |^2 d (E_\lambda \phi,\phi) 
\leq \liminf_{j\to \infty}
\int_0^\infty |\psi_j(\lambda) |^2 d (E_\lambda \phi,\phi) 
\leq c_0^{-1}
\|\phi\|_{L^1(G)}^2 \|\kappa\|_{L^2(G)}^2 ,
$$
since we have
$$ 
c_0\int_0^\infty |\psi_j(\lambda) |^2 d (E_\lambda \phi,\phi) 
=
\|\psi_j(\cR)\phi\|_{L^2(G)}^2
=
\|\phi *\psi_j(\cR)\delta_0\|_{L^2(G)}^2
\leq 
\|\phi\|_{L^1(G)}^2 \|\psi_j(\cR)\delta_0\|_{L^2(G)}^2,
$$
by the Young convolution inequality.
This shows that $\psi(\cR)$ defines a continuous map $\cS(G)\to \cS'(G)$ thus admits a right-convolution kernel $\psi(\cR)\delta_0 \in \cS'(G)$ in the sense of distributions. 
 It remains to show the equality $\kappa=\psi(\cR)\delta_0$.
For this we observe that we have 
for any $\chi\in \cD(\bR)$ and any $\phi\in \cS(G)$:
$$
\|(\psi(\cR) -\psi_j(\cR)) \chi (\cR) \phi\|_{L^2(G)}
\leq \|(\psi -\psi_j) \chi (\cR) \delta_0\|_{L^2(G)}
 \|\phi\|_{L^1(G)},
 $$
and, since the statement is proved for  functions with compact support,
$$
\|(\psi -\psi_j) \chi (\cR) \delta_0\|_{L^2(G)}^2
= c_0\int_0^\infty|(\psi -\psi_j) \chi (\lambda)|^2 \lambda^{\frac Q \nu} \frac {d\lambda}{\lambda}
\leq c_0 \|\chi\|_{L^\infty(\bR)}
\|\psi-\psi_j\|_{L^2(\bR^+,  \lambda^{Q/\nu} d\lambda/\lambda)}.
$$ 
The last expression converges to 0 as $j\to \infty$ by hypothesis.
Hence 
$$
\lim_{j\to \infty}\|(\psi(\cR) -\psi_j(\cR)) \chi (\cR) \phi\|_{L^2(G)}=0 = \|(\chi(\cR)\phi )*(\psi(\cR)\delta_0 -\kappa )\|_{L^2(G)}.
$$
This implies that $\langle \chi(\cR)\phi , \psi(\cR)\delta_0  -\kappa\rangle=0$
and thus $\psi(\cR)\delta_0 -\kappa=0$ as tempered distributions by Lemma \ref{lem_appId}.
This concludes the sketch of the proof of Theorem \ref{thm_christ}.
\end{proof}

Bilinearising the $L^2$-norms in Theorem \ref{thm_christ} and using an approximate identity as in Lemma \ref{lem_appId} easily yield:
\begin{corollary}
\label{cor_thm_christ}
Keeping the notation of Theorem \ref{thm_christ}, we have for any $\psi\in \cS(\bR)$
$$
\psi(\cR)\delta_0(0)
 	=c_0\int_0^\infty \psi (\lambda)  \lambda^{\frac Q \nu} \frac{d\lambda}{\lambda}.
 $$
\end{corollary}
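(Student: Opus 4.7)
The plan is to bilinearize (polarize) the isometry of Theorem \ref{thm_christ} and then use an approximate identity to pass from a pair of weighted-$L^2$ multipliers to an arbitrary Schwartz multiplier. Polarizing the identity $\|\psi(\cR)\delta_0\|_{L^2(G)}^2 = c_0 \int_0^\infty |\psi(\lambda)|^2 \lambda^{Q/\nu} d\lambda/\lambda$ yields, for every $\psi_1, \psi_2 \in L^2(\bR^+, \lambda^{Q/\nu}d\lambda/\lambda)$,
$$
\langle \psi_1(\cR)\delta_0, \psi_2(\cR)\delta_0\rangle_{L^2(G)} = c_0 \int_0^\infty \psi_1(\lambda) \overline{\psi_2(\lambda)} \lambda^{Q/\nu} \frac{d\lambda}{\lambda}.
$$

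Next I would reinterpret the left-hand side as the value at $0$ of a convolution kernel. Since $\cR$ is self-adjoint on $L^2(G)$, $\psi_2(\cR)^* = \overline{\psi_2}(\cR)$, hence the right-convolution kernel of the adjoint operator is $\overline{\psi_2}(\cR)\delta_0(x) = \overline{\psi_2(\cR)\delta_0(x^{-1})}$. Combining this with the standard identity $\langle \kappa_1, \kappa_2\rangle_{L^2(G)} = (\kappa_1 * \kappa_2^{*})(0)$ and the fact that composition of right-convolution operators corresponds to convolution of kernels, I obtain
$$
\langle \psi_1(\cR)\delta_0, \psi_2(\cR)\delta_0\rangle_{L^2(G)} = (\psi_1 \overline{\psi_2})(\cR)\delta_0(0),
$$
the pointwise evaluation being legitimate because the convolution of two $L^2(G)$ functions is a bounded continuous function.

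Finally, given $\psi \in \cS(\bR)$, I would insert an approximate identity. Fix $\chi \in \cS(\bR)$ with $\chi(0) = 1$ and set $\chi_r(\lambda) := \chi(r\lambda)$. Taking $\psi_1 = \psi$ and $\psi_2 = \overline{\chi_r}$ (both Schwartz, hence in the weighted $L^2$ since $Q/\nu > 0$), the previous step gives
$$
(\chi_r \psi)(\cR)\delta_0(0) = c_0 \int_0^\infty \chi_r(\lambda)\psi(\lambda) \lambda^{Q/\nu}\frac{d\lambda}{\lambda}.
$$
Letting $r \to 0$: on the left, Lemma \ref{lem_appId} applied to $\psi(\cR)\delta_0$ (Schwartz by Hulanicki's Theorem \ref{thm_hula}) gives $(\chi_r \psi)(\cR)\delta_0 = \chi_r(\cR)[\psi(\cR)\delta_0] \to \psi(\cR)\delta_0$ in $\cS(G)$, and hence pointwise at $0$; on the right, dominated convergence applies with majorant $\|\chi\|_{L^\infty} |\psi(\lambda)| \lambda^{Q/\nu - 1}$, integrable on $(0, \infty)$ by the Schwartz decay of $\psi$ and the fact that $Q/\nu > 0$. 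The only delicate point is the bookkeeping in the reinterpretation step---tracking adjoints and the order of convolution---and confirming that the composition $\psi_1(\cR)\overline{\psi_2}(\cR) = (\psi_1 \overline{\psi_2})(\cR)$ has a kernel whose value at $0$ coincides with the inner product. Once this is in place, the approximate-identity limit is essentially immediate.
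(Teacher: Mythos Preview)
Your proof is correct and follows precisely the route the paper indicates: bilinearising the isometry of Theorem~\ref{thm_christ} and then applying the approximate identity of Lemma~\ref{lem_appId}. The paper gives only this one-line sketch, and you have supplied the details faithfully; the only cosmetic point is that the kernel of $(\psi_1\overline{\psi_2})(\cR)$ is $\kappa_2^{*}*\kappa_1$ rather than $\kappa_1*\kappa_2^{*}$ (right-convolution reverses the order), but since both agree at $0$ your conclusion is unaffected.
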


\subsubsection{Comments on the constant $c_0$}
\label{subsubsec_c_0}

Our first observation is the following expression for the constant in the statement in terms of the heat kernel $p_t$ of $\cR$:
\begin{equation}
\label{eq_c0}
	c_0 = c_0(\cR) = \frac{p_1(0)}{\Gamma(Q/\nu)}.
\end{equation}
Indeed, it suffices to plug   the function $\psi(\lambda) =e^{-\lambda}$ in Corollary \ref{cor_thm_christ}.

We observe that the operator $\cR$ being positive Rockland and the constant $c_0(\cR)$
 are independent of a choice of dilations adapted to the gradation of $G$ in the sense of Remark \ref{rem_natural_dilation}. 
This is also the case for the properties in Theorem \ref{thm_christ}.  

\medskip 

Our second observation is that the two ways of determining the constant 
directly from the Plancherel formula in Theorem \ref{thm_christ} 
or from \eqref{eq_c0}  coincide for instance, if we change the positive Rockland operator $\cR$ for a multiple $c\cR$ for some $c>0$. Indeed, we check easily using  
\eqref{eq_c0} and \eqref{eq_homogeneitypsiR} or Theorem \ref{thm_christ}  and a change of variable on $(0,\infty)$
$$
c_0(c\cR) = c^{-Q/\nu} c_0(\cR),
$$

Let us also check what happens if we compare the formulae for 
a positive Rockland operator  $\cR$  and  any of its positive powers $\cR^\ell$ - which is also a positive Rockland operator. 
A simple change of variable on $(0,+\infty)$ implies
\begin{equation}
\label{eq_c0Rl}
c_0(\cR^\ell) = \frac 1 \ell c_0(\cR)
\quad \mbox{for any}\ \ell\in \bN.
\end{equation}
This together with \eqref{eq_c0} imply the following properties for the heat kernels $p_{\cR,t}$ and $p_{\cR^\ell,t}$ of $\cR$ and $\cR^\ell$:
\begin{equation}
\label{eq_p10Rl}
p_{\cR,1}(0) = p_{\cR^\ell,1}(0)
 \frac{\ell \Gamma(Q/\nu)}{\Gamma(Q/(\ell\nu))}.
 \end{equation}
 Let us sketch an independent proof for this last formula. 
 It will use the representation of $e^{-\lambda^\alpha}$, $\alpha\in (0,1)$, as a Laplace integral  \cite{pollard46}:
 \begin{lemma}
\label{lem_pollard}
 Let $\alpha\in (0,1)$. We have for any $\lambda\geq 0$
 $$
 e^{-\lambda^\alpha} = \int_0^\infty e^{-\lambda s}\phi_\alpha(s) ds,
$$
 where  $\phi_\alpha$ is a known  non-negative measurable function  on $(0,\infty)$ of integral one. 
 \end{lemma}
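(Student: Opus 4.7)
The plan is to derive this classical representation, due to Pollard, via Bernstein's theorem on completely monotone functions combined with an explicit inverse Laplace transform. The resulting $\phi_\alpha$ is (up to scaling) the density of a one-sided $\alpha$-stable law, so non-negativity and unit mass fall out naturally from the probabilistic picture.

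The first step is to verify that $f(\lambda):=e^{-\lambda^\alpha}$ is completely monotone on $(0,\infty)$, i.e. $(-1)^n f^{(n)}(\lambda)\geq 0$ for every integer $n\geq 0$ and every $\lambda>0$. The cleanest route is to note that for $\alpha\in(0,1)$ the power $\lambda\mapsto \lambda^\alpha$ is a Bernstein function, because its derivative $\alpha\lambda^{\alpha-1}$ is completely monotone (the $n$-th derivative of $\lambda^{\alpha-1}$ has sign $(-1)^n$ since $\alpha-1<0$). Since $e^{-x}$ is itself completely monotone, and the composition of a completely monotone function with a Bernstein function is completely monotone, the conclusion follows.

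Bernstein's theorem then produces a unique positive Borel measure $\mu_\alpha$ on $[0,\infty)$ with
$$e^{-\lambda^\alpha}=\int_{[0,\infty)} e^{-\lambda s}\,d\mu_\alpha(s),\qquad \lambda\geq 0.$$
Setting $\lambda=0$ gives $\mu_\alpha([0,\infty))=1$. The decay of the left-hand side as $\lambda\to\infty$ (since $\alpha>0$) rules out a point mass at $0$, so $\mu_\alpha$ is a probability measure concentrated on $(0,\infty)$.

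To upgrade $\mu_\alpha$ to an absolutely continuous measure with density $\phi_\alpha$, I would apply a Mellin-Barnes/Hankel contour deformation to the formal inverse Laplace transform
$$\phi_\alpha(s)=\frac{1}{2\pi i}\int_{c-i\infty}^{c+i\infty} e^{s\lambda}\,e^{-\lambda^\alpha}\,d\lambda,\qquad c>0,$$
collapsing the contour onto the branch cut of $\lambda^\alpha$ along the negative real axis to express $\phi_\alpha(s)$ as a convergent real integral in $s>0$ (which, incidentally, yields the well-known Mittag-Leffler series expansion for $\phi_\alpha$). Non-negativity is then inherited for free from the Bernstein representation, and $\int_0^\infty \phi_\alpha(s)\,ds=1$ from evaluating the Laplace transform at $\lambda=0$. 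The main technical obstacle is justifying the interchanges of limits and contour deformations needed to pass from the abstract measure $\mu_\alpha$ to the explicit density $\phi_\alpha$; this is precisely the content of \cite{pollard46}, which may simply be invoked.
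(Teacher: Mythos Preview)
Your proof sketch is correct, but note that the paper does not actually prove this lemma: it is stated with attribution to Pollard \cite{pollard46} and used as a black box, with the explicit formula for $\phi_\alpha$ recorded in a subsequent remark but never derived. Your route via Bernstein's theorem (complete monotonicity of $e^{-\lambda^\alpha}$ from the fact that $\lambda\mapsto\lambda^\alpha$ is a Bernstein function for $\alpha\in(0,1)$) is the standard modern argument and cleanly gives existence of the representing probability measure $\mu_\alpha$ together with the absence of an atom at $0$; the final step of showing $\mu_\alpha$ has a density is exactly what you defer to \cite{pollard46}, which is appropriate. So your proposal is more detailed than the paper's treatment, not less, and there is no gap.
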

 
 \begin{remark}
 In this paper, we will not use the expression for $\phi_\alpha$:
 $$
 \phi_\alpha(s)=\frac 1\pi \int_0^\infty e^{-tu} e^{-u^\alpha \cos \pi \alpha}\sin (u^\alpha \sin \pi\alpha) ds.
 $$
 \end{remark}

 Using Lemma \ref{lem_pollard},  properties from functional calculus and the homogeneity of the heat kernel from \eqref{eq_homogeneitypsiR} yield
$$
p_{\cR,1}(0) = p_{\cR^\ell,1}(0) c'_{\alpha,\beta}
\quad\mbox{where}\quad
c'_{\alpha,\beta}:=\int_0^\infty s^{-\alpha \beta} \phi_\alpha (s) ds,
\quad \alpha:=\frac 1\ell,\ \beta:=  \frac Q \nu.
 $$
We also compute $c'_{\alpha,\beta}=  \frac{ \Gamma(\beta)}{\alpha\Gamma(\alpha\beta)}$
 using \eqref{eq_lambda-sGamma} and Lemma \ref{lem_pollard}.

 \medskip
 
 Our third observation is that it is possible to check Theorem \ref{thm_christ} in familiar situations. 
 For instance, if the positive Rockland operator is the canonical Laplacian $\Delta_{\bR^n} = -\sum_j \partial_j^2$ on the abelian group $G=\bR^n$, 
using the Euclidean Fourier transform and a change in polar coordinates, we readily obtain 
$c_0 (\Delta_{\bR^n})=  (\Gamma(n/2) 2^n \pi^{n/2})^{-1}$
which is indeed equal to $p_1(0)/\Gamma(n/2)$ since  the heat kernels in this setting are the Gaussians $p_t (x) = (4\pi t)^{-n/2} e^{-|x|^2/4t}$.

 We can also check Theorem \ref{thm_christ} and the formula in \eqref{eq_c0} in the case of the Heisenberg group.
Realising the Heisenberg group $\bH_n$ as $\bR^{2n+1}=\bR^n\times\bR^n\times \bR$ with the group law
$$
(x,y,t)(x',y',t') = (x+x',y+y',t+t'+\frac 12 (x\cdot y' - y\cdot x')), 
$$
the canonical sub-Laplacian $\cL_{\bH_n}$ is well understood, see e.g. \cite{Than98}  or \cite{R+F_monograph}. In particular, the relations between the  Fourier transform of $\bH_n$ and the functional calculus of $\cL_{\bH_n}$ yield
\begin{align*}
\|\psi(\cL_{\bH_n})\delta_0\|_{L^2(\bH_n)}^2 
&= (2\pi)^{-(3n+1)}\int_{-\infty}^{+\infty} \sum_{a=0}^\infty \frac {(n+a-1)!}{(n-1)! a!} |\psi(|\lambda|(2a+n))|^2 |\lambda|^n d\lambda
\\&=c_0 (\cL_{\bH_n})\int_0^\infty |\psi(\lambda)|^2 \lambda^{Q/2} \frac{d\lambda}\lambda,
\end{align*}
with $Q=2n+2$, so
$$
c_0(\cL_{\bH_n})= 
 (2\pi)^{-(3n+1)}2 \sum_{a=0}^\infty \frac {(n+a-1)!}{(n-1)! a!} (2a+n)^{-n-1}.
 $$
 The constant $c_0(\cL_{\bH_n})$ can also be determined via \eqref{eq_c0} and the known expression of the heat kernel due to Gaveau \cite{gaveau77}. Naturally, 
 the known expressions of the heat kernel via representation theory would lead to the same expression as above. 
 Using our method, we can also determine the constant $c_0$ for the positive Rockland operator $\cR= \cL_{\bH_n}^2 - \partial_t^2$.

\subsection{Properties of positive Rockland operators on $M$}
\label{subsec_propRM}

This section is devoted to the general properties of positive Rockland operators.
Many of these properties, for instance regarding self-adjointness and heat kernels,
are well-known for general sub-Laplacians on smooth manifolds \cite[p. 261-262]{Strichartz86}. Some of them are already known for Rockland operators on the compact filtered manifolds from the recent paper \cite{Dave+Haller}.

\smallskip

Let $\cR$ be a positive Rockland operator on $G$.
The operator $\cR_M$  it induces on $M$ is a smooth differential operator which is positive and essentially self-adjoint on $L^2(M)$, see  Section \ref{subsec_OpGM}. 
We will keep the same notation for $\cR_M$  and for its self-adjoint extension.
The properties of the functional calculus for $\cR$ imply:

\begin{lemma}
\label{lem_psiRM}
 Let $\cR$ be a positive Rockland operator on $G$.
 Let $\psi\in \cS(\bR)$.
 
 	The operator $\psi(\cR_M)$ defined as a bounded spectral multiplier on $L^2(M)$ coincides with the operator 
$$
\phi \longmapsto (\psi (\cR) \phi_G	)_M= (\phi_G * \kappa
)_M
$$
where  $\kappa:=\psi( \cR)\delta_0\in \cS(G)$.
 
 The integral kernel of $\psi(\cR_M)$ is a smooth function on $M\times M$ given by 
  $$
K(\dot x,\dot y) = 
\sum_{\gamma\in \Gamma}  \kappa(y^{-1}\gamma  x).
$$
The operator $\psi(\cR_M)$ is Hilbert-Schmidt on $L^2(M)$.
 \end{lemma}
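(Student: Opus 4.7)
The Schwartz regularity $\kappa:=\psi(\cR)\delta_0\in \cS(G)$ is Hulanicki's theorem (Theorem \ref{thm_hula}). The right-convolution operator $T:\phi\mapsto \phi*\kappa$ on $G$ is left-invariant, hence \emph{a fortiori} $\Gamma$-invariant, so Section \ref{subsec_OpGM} produces a bounded operator $T_M$ on $M$; Lemma \ref{lem_IntKernel} then immediately supplies the smooth integral kernel $K(\dot x,\dot y)=\sum_{\gamma\in\Gamma}\kappa(y^{-1}\gamma x)$ (with summability and term-by-term differentiability coming from Corollary \ref{corthm_InclusionLattice} together with the Schwartz decay of $\kappa$) as well as the Hilbert--Schmidt property. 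The substantive content of the lemma is thus the identification $T_M=\psi(\cR_M)$ as bounded operators on $L^2(M)$.

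\textbf{Heat case.} I would first treat $\psi_t(\lambda)=e^{-t\lambda}$, so $\kappa=p_t$. Given $F\in L^2(M)$, the function $v(t,x):=(F_G*p_t)(x)$ lies in $C_b^\infty(G)^\Gamma$ for each $t>0$ by Lemma \ref{lem_convolution}, satisfies $\partial_t v+\cR v=0$ on $(0,\infty)\times G$, and converges to $F_G$ as $t\to 0^+$ in $L^2_{\mathrm{loc}}(G)$. By Lemma \ref{lem_isomorphism_int} its descent $u(t,\cdot):=v(t,\cdot)_M$ solves $\partial_tu+\cR_M u=0$ on $(0,\infty)\times M$ with initial datum $F$ in $L^2(M)$, and the essential self-adjointness of $\cR_M$ (Section \ref{subsec_propRM}) forces $u=e^{-t\cR_M}F$; hence $T_{M,p_t}=e^{-t\cR_M}$.

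\textbf{General $\psi$.} Both $\psi(\cR_M)$ and $T_{M,\kappa}$ are bounded on $L^2(M)$ and commute with every $e^{-t\cR_M}$; for $T_{M,\kappa}$ this uses the identity $\kappa*p_t=p_t*\kappa$ on $G$, itself a consequence of the functional-calculus commutation $\psi(\cR)e^{-t\cR}=e^{-t\cR}\psi(\cR)$. Hypoellipticity of $\cR_M$ (Section \ref{subsec_propRM}) yields a discrete spectrum $\{\lambda_j\}$ with smooth eigenfunctions $\phi_j$ whose span is dense in $L^2(M)$; both operators preserve each finite-dimensional eigenspace $V_j=\ker(\cR_M-\lambda_j)$, and $\psi(\cR_M)$ acts there as the scalar $\psi(\lambda_j)\,\id$. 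It thus suffices to show $T_{M,\kappa}\phi=\psi(\lambda_j)\phi$ for $\phi\in V_j$, equivalently $\phi_G*\kappa=\psi(\lambda_j)\phi_G$ for the lift $\phi_G\in C_b^\infty(G)^\Gamma$ (which satisfies $\cR\phi_G=\lambda_j\phi_G$). This is the main obstacle: $\phi_G$ is bounded and smooth but not in $L^2(G)$, so one cannot apply the $L^2$-spectral theorem for $\cR$ on $\phi_G$ directly to read off the eigenvalue $\psi(\lambda_j)$. My plan is to leverage the heat identity $\phi_G*p_t=e^{-t\lambda_j}\phi_G$ (available for every $t>0$ from the previous stage) via the Mellin representation $\psi(\lambda)=\frac{1}{2\pi i}\int_{\Re s=\sigma}\widetilde\psi(s)\lambda^{-s}\,ds$ combined with \eqref{eq_lambda-sGamma}, writing $\kappa$ as an integral of heat kernels $p_t$ and applying Fubini; the interchange of integrals is justified by Hulanicki's Schwartz bound on $\kappa$ and the lattice decay from Corollary \ref{corthm_InclusionLattice}. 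An alternative would be a density argument: approximate $\psi$ by combinations $\sum_k c_k e^{-t_k\lambda}$ in a topology fine enough that $\kappa$ converges in $\cS(G)$ via Hulanicki's continuity and $T_{M,\kappa}$ converges in Hilbert--Schmidt norm on $L^2(M)$ via Lemma \ref{lem_IntKernel}, then pass to the limit on both sides of the identity.
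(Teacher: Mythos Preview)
The paper's own proof is a single sentence invoking Hulanicki's theorem and Lemma~\ref{lem_IntKernel}; it treats the identification $\psi(\cR_M)=T_{M,\kappa}$ as part of the ambient functional calculus rather than arguing it. You are entirely right that this identification is the substantive point, and your uniqueness argument for the heat case $\psi(\lambda)=e^{-t\lambda}$ is correct and standard.

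Your passage to general $\psi\in\cS(\bR)$, however, does not close. The Mellin route breaks in two places. First, ``writing $\kappa$ as an integral of heat kernels $p_t$'' is the assertion that $\psi|_{[0,\infty)}$ is a Laplace transform; by Bernstein's theorem only completely monotone functions arise that way, so this already fails for, say, a compactly supported $\psi$. Second, even if you keep the two integrals separate and try Fubini on
\[
\psi(\lambda)=\frac{1}{2\pi i}\int_{\Re s=\sigma}\frac{\widetilde\psi(s)}{\Gamma(s)}\int_0^\infty t^{s-1}e^{-t\lambda}\,dt\,ds,
\]
the interchange is illegitimate: along $\Re s=\sigma$ one has $|1/\Gamma(\sigma+i\tau)|\asymp |\tau|^{1/2-\sigma}e^{\pi|\tau|/2}$, which swamps the merely polynomial decay of $\widetilde\psi(\sigma+i\tau)$ for generic Schwartz $\psi$. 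Your density alternative has the same defect from another side: finite linear combinations $\sum_k c_k e^{-t_k\lambda}$ are entire in $\lambda$, so they cannot approximate a non--real-analytic $\psi$ in any topology that controls $\cS(G)$-seminorms of $\kappa$ via Hulanicki.

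A repair that keeps your architecture: from the heat case and $p_t*\kappa=\kappa*p_t$ you get that every $T_{M,\kappa}$ commutes with the semigroup $e^{-t\cR_M}$, hence preserves each finite-dimensional eigenspace $V_j$; moreover $\psi\mapsto T_{M,\psi(\cR)\delta_0}|_{V_j}$ is a $*$-homomorphism of commutative algebras into $B(V_j)$ sending $e^{-t\cdot}\mapsto e^{-t\lambda_j}\id$. What is still missing is that this homomorphism takes \emph{scalar} values, and equals evaluation at $\lambda_j$. This last step is exactly the statement that for a unitary representation $\pi$ of $G$ (here the right regular representation on $L^2(M)$) one has $\psi\bigl(d\pi(\cR)\bigr)=\int_G\kappa(g^{-1})\pi(g)\,dg$; this is part of the Rockland functional calculus developed in \cite{R+F_monograph} and is presumably what the paper's one-line proof is silently invoking. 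If you want a self-contained argument, the cleanest is to prove that representation-level identity once (again via heat uniqueness, now for the abstract semigroup $t\mapsto\int_G p_t(g^{-1})\pi(g)\,dg$ on $\cH_\pi$) rather than to attempt the Mellin or exponential-density shortcuts.
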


\begin{proof}
This follows from 
Hulanicki's theorem (Theorem \ref{thm_hula})
and Lemma \ref{lem_IntKernel}.	
\end{proof}

The following statement is classical for sub-Laplacians on compact manifolds. We provide here a self-contained proof for $\cR_M$:

\begin{proposition}
\label{prop_sp}
\begin{enumerate}
\item The spectrum ${\rm Sp}(\cR_M)$ of 	$\cR_M$ is a discrete and unbounded subset of $[0,+\infty)$.
Each eigenspace of $\cR_M$ has finite dimension.
\item The constant functions on $M$ form the 0-eigenspace of $\cR_M$.
\end{enumerate}
\end{proposition}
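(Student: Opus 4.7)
The plan is to combine the spectral multiplier machinery of Lemma~\ref{lem_psiRM} with classical compact-operator theory for part (1), and with a Riemann-sum computation of the heat trace for part (2).

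For part (1), I will pick any $\psi\in\cS(\bR)$ strictly positive on $[0,\infty)$ (for instance a Schwartz extension of $\lambda\mapsto e^{-\lambda}$). By Lemma~\ref{lem_psiRM} the operator $\psi(\cR_M)$ is Hilbert--Schmidt, hence compact; it is self-adjoint and, since the spectrum of $\cR_M$ lies in $[0,\infty)$ by Section~\ref{subsec_OpGM}, that of $\psi(\cR_M)$ lies in $\psi([0,\infty))\subset(0,\infty)$, so $\psi(\cR_M)$ is injective. The spectral theorem for injective compact self-adjoint operators then produces an orthonormal eigenbasis of $L^2(M)$ with eigenvalues $\mu_k>0$, $\mu_k\to 0$, and finite-dimensional eigenspaces $V_k$. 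Commutation of $\cR_M$ with $\psi(\cR_M)$ makes each $V_k$ invariant under $\cR_M$; the restriction $\cR_M|_{V_k}$ is symmetric on a finite-dimensional space, hence diagonalises with eigenvalues in $\psi^{-1}(\{\mu_k\})\cap[0,\infty)$. Schwartz decay of $\psi$ together with its positivity on $[0,\infty)$ forces these eigenvalues to tend to $+\infty$ as $\mu_k\to 0$, yielding discreteness, unboundedness and finite multiplicities.

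For part (2), the inclusion $\bC\cdot 1\subset\ker\cR_M$ is immediate: $\cR$ is a sum of monomials $X^\alpha$ with $[\alpha]=\nu>0$, so $|\alpha|\geq 1$ and $\cR$ annihilates constants. For the converse, rather than arguing with a generic $F\in\ker\cR_M$, I propose to show $\dim\ker\cR_M=1$ by computing $\lim_{t\to\infty}\tr(e^{-t\cR_M})$ in two ways. By part (1), $e^{-t\cR_M}=(e^{-(t/2)\cR_M})^2$ is trace-class, its trace equals $\sum_j e^{-t\lambda_j}$, and dominated convergence gives this $\to\#\{j:\lambda_j=0\}=\dim\ker\cR_M$. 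On the other hand, Lemma~\ref{lem_psiRM} yields
$$
\tr(e^{-t\cR_M})=\int_M K_t(\dot x,\dot x)\,d\dot x,\qquad K_t(\dot x,\dot x)=\sum_{\gamma\in\Gamma}p_t(x^{-1}\gamma x).
$$
The key analytic claim will be that $K_t(\dot x,\dot x)\to\vol(M)^{-1}$ pointwise as $t\to\infty$, with a $t$-uniform integrable majorant on $M$; dominated convergence then yields $\tr(e^{-t\cR_M})\to 1$, which combined with the spectral computation forces $\dim\ker\cR_M=1$ and hence $\ker\cR_M=\bC\cdot 1$.

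The only non-routine step is this Riemann-sum claim, which I expect to be the main obstacle. Using the homogeneity $p_t(z)=t^{-Q/\nu}p_1(\delta_{t^{-1/\nu}}z)$ and setting $\varepsilon=t^{-1/\nu}$, the diagonal rewrites as
$$
K_t(\dot x,\dot x)=\varepsilon^Q\sum_{\gamma\in\Gamma}p_1\bigl(\delta_\varepsilon(x^{-1}\gamma x)\bigr),
$$
a Riemann sum over the conjugate lattice $\Gamma_x:=x^{-1}\Gamma x$; since $G$ is unimodular, inner automorphisms preserve Haar measure, so $\Gamma_x$ is discrete co-compact with the same covolume $\vol(M)$ as $\Gamma$. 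Schwartz decay of $p_1$ together with Lemma~\ref{lem_sum} will provide absolute convergence of the series as well as the required uniform majorant, using compactness of $M$ and the modified triangle inequality to compare $\|x^{-1}\gamma x\|$ with $\|\gamma\|$ as $x$ ranges over a fundamental domain. A standard Riemann-sum comparison with $\int_G p_1(\delta_\varepsilon z)\,dz=\varepsilon^{-Q}\int_G p_1=\varepsilon^{-Q}$ then yields the limit $\vol(M)^{-1}$. With this in hand, the spectral accounting above closes the proof.
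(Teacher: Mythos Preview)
Your argument for Part~(1) is essentially the paper's approach: both show that a suitable $\psi(\cR_M)$ (the paper takes $\psi(\lambda)=e^{-\lambda}$) is compact self-adjoint with trivial kernel, then read off the spectral structure of $\cR_M$. Your injectivity argument via functional calculus is in fact slightly slicker than the paper's, which argues that $e^{-t\cR_M}f=0$ forces $e^{-t'\cR}f_G=0$ along $t'=t,t/2,t/4,\ldots$ and then invokes the approximate-identity property. One small correction: on an infinite-dimensional space a compact operator always has $0$ in its spectrum, so you should say that $0$ is not an \emph{eigenvalue} of $\psi(\cR_M)$, rather than that the spectrum lies in $(0,\infty)$.

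For Part~(2) the paper takes a completely different and much shorter route. Given $f\in L^2(M)$ with $\cR_M f=0$, it lifts to $f_G\in\cS'(G)^\Gamma$ with $\cR f_G=0$ and invokes Geller's Liouville theorem for positive Rockland operators \cite[Theorem~3.2.45]{R+F_monograph}: any tempered distribution annihilated by $\cR$ is a polynomial. A periodic polynomial is constant, and the argument is complete in three lines.

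Your long-time heat-trace approach is conceptually correct and can be pushed through, but you are underestimating the ``standard Riemann-sum comparison''. On a non-abelian graded group the dilated conjugate lattice $\delta_\varepsilon\Gamma_x$ is a genuine subgroup with covolume $\varepsilon^Q\vol(M)$, and the limit $\varepsilon^Q\vol(M)\sum_{\gamma}p_1(\delta_\varepsilon(x^{-1}\gamma x))\to\int_G p_1$ does hold; but proving it requires (i) a lattice-point count $\#\{\gamma\in\Gamma:\|\gamma\|\leq R\}\lesssim R^Q$, which follows from Theorem~\ref{thm_InclusionLattice} but is not stated in the paper, (ii) a mean-value estimate of the form $|p_1(\lambda w)-p_1(\lambda)|\lesssim\sum_j|w_j|\,(1+\|\lambda\|)^{-N}$ controlling the oscillation of $p_1$ over each cell $\lambda\cdot\delta_\varepsilon D$, and (iii) the uniform comparison of $\|x^{-1}\gamma x\|$ with $\|\gamma\|$ as in the proof of Proposition~\ref{prop_trace}, here needed \emph{before} that proposition is available. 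You also need $\int_G p_1=1$, which is true but not recorded in the paper. None of this is a genuine obstruction, but the resulting proof is substantially longer than the Liouville argument and duplicates machinery that the paper develops only later for the short-time asymptotics.
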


Note that a consequence of Part (1) is that the resolvent operators $(\cR_M -z)^{-1}$, $z\in \bC \setminus {\rm Sp}(\cR_M)$,  are  compact on $L^2(M)$.

\begin{proof}[Proof of Proposition \ref{prop_sp} (1)]
The heat operator $e^{-t\cR_M}$ is positive and Hilbert-Schmidt  on $L^2(M)$, 
so its spectrum is a discrete subset of $[0,\infty)$ and the only possible accumulation point is zero.
Furthermore, the eigenspaces of $e^{-t\cR_M}$ for positive eigenvalues have finite dimensions. 
Let us show that the kernel of each $e^{-t\cR_M}$ is trivial. 
If $e^{-t\cR_M}f=0$ for some $f\in L^2(M)$ then 
$e^{-t'\cR_M}f=0$ for $t'=t, t/2,t/4,\ldots$
since $\|e^{-(t'/2)\cR_M}f\|_{L^2(M)}^2=(e^{-t'\cR_M}f,f)=0$.
By Section \ref{subsec_periodicfcn}, 
$f_G$ is a periodic tempered distribution in $L^2_{loc}(G)$ satisfying  $e^{-t' \cR} f_G=0$ 
for $t'=t, t/2,t/4,\ldots$, but this implies $f_G=0$ 
since 
$e^{-s \cR}$ converges to the identity on $\cS'(G)$ as $s\to 0$ by Lemma \ref{lem_appId}. So $f=0$. We have obtained that the kernel of each $e^{-t\cR_M}$ is trivial and thus that their spectrum is included in $(0,+\infty)$.

The spectrum of $\cR_M$ is the discrete set 
  ${\rm Sp}(\cR_M) = -\ln {\rm Sp} (e^{-\cR_M}) \subset \bR$.
It is unbounded since  $\cR_M$ is a (non-constant) differential operator. It is included in $[0,+\infty)$ as $\cR_M$ is positive. 
The eigenspaces for $\cR_M$ and for its heat kernels are in one-to-one correspondence, and therefore have finite dimensions. 
\end{proof}

\begin{proof}[Proof of Proposition \ref{prop_sp} (2)]
If a function is constant on $M$, then it is a 0-eigenfunction. Let us prove the converse. 
Let $f$ be a 0-eigenfunction, i.e.  $f\in L^2(M)$ and $\cR_M f=0$.
By Section \ref{subsec_periodicfcn}, 
$f_G$ is a periodic tempered distribution in $L^2_{loc}(G)$ satisfying $\cR f_G=0$.
By the Liouville theorem for homogeneous groups \cite[Theorem 3.2.45]{R+F_monograph} due to Geller \cite{Geller83}, 
 $f_G$ is a polynomial on $G\sim \bR^n$.
 As it is periodic, it must be a constant. Hence $f$ is a constant. 
\end{proof}

A consequence of Sections \ref{subsec_OpGM} and \ref{subsubsec_posRdef} is that the operator $\cR_M$ is hypoelliptic on the manifold $M$.
The same argument shows that  the operator $\cR_M -E {\rm I}$  is hypoelliptic for every constant $E\in \bC$, and this implies:
\begin{lemma}
 The eigenfunctions of $\cR_M$ are smooth on $M$.
\end{lemma}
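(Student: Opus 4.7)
The statement is essentially an immediate consequence of the hypoellipticity claim made in the sentence preceding the lemma, so my plan is to unpack that implication carefully rather than introduce any new machinery.

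First, I would fix $f \in L^2(M)$ an eigenfunction of the self-adjoint extension $\cR_M$, say with eigenvalue $E \in [0,+\infty)$ (the spectrum lies in $[0,+\infty)$ by Proposition \ref{prop_sp}). Via the inclusion $L^2(M) \subset \cD'(M)$ coming from Section \ref{subsec_periodicfcn}, $f$ is a distribution on $M$ satisfying the distributional equation
$$
(\cR_M - E\,\id) f = 0 \quad \text{in } \cD'(M).
$$
The right-hand side is identically zero and thus smooth on $M$.

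Next, I would invoke the hypoellipticity of $\cR_M - E\,\id$ stated just above the lemma: since $\cR = \cR + \sum_{[\alpha]<\nu} c_\alpha X^\alpha$ with a single constant term $-E$ of order $0<\nu$ is hypoelliptic on $G$ (by the remark in Section \ref{subsubsec_posRdef} that positive Rockland operators remain hypoelliptic under lower-order perturbations), and since hypoellipticity transfers from $G$ to $M$ through the local chart description of the fundamental domain (as in the sketch at the end of Section \ref{subsec_OpGM}), the operator $\cR_M - E\,\id$ is hypoelliptic on $M$. Hypoellipticity means that any distributional solution $u$ of $(\cR_M - E\,\id) u = g$ with $g$ smooth on an open set is itself smooth on that set; applied to $g \equiv 0$ on all of $M$, this forces $f \in C^\infty(M)$.

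There is no real obstacle here: the only subtle point is making sure the constant-coefficient shift by $E$ does not spoil hypoellipticity, which is precisely the content of the parenthetical remark already recorded in Section \ref{subsubsec_posRdef}, and making sure the eigenfunction is genuinely a distribution on $M$ (not just an abstract $L^2$-vector), which is guaranteed by Lemma \ref{lem_isomorphism_distrib}. With these two observations in place, the proof is a one-line application of hypoellipticity.
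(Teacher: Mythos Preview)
Your proposal is correct and follows exactly the route the paper takes: the sentence immediately preceding the lemma already records that $\cR_M - E\,\mathrm{I}$ is hypoelliptic on $M$ (via the lower-order-perturbation remark in Section~\ref{subsubsec_posRdef} and the transfer from $G$ to $M$ in Section~\ref{subsec_OpGM}), and the lemma is stated as a direct consequence. You have simply unpacked that one-line implication with the appropriate care about $f$ being a distribution and $0$ being smooth; there is nothing to add or correct.
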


\section{Asymptotics}
\label{sec_asymptotics}

In this section, we consider a nilmanifold $M=\Gamma\backslash G$ which is the quotient of a graded Lie group $G$ by a co-compact discrete subgroup $\Gamma$. 
We start with  obtaining asymptotics for traces of operators on $M$ coming from convolution operators on $G$.
We  apply these results to spectral multipliers in a positive Rockland operator, and this will  show  Theorem \ref{thm_main} (i).

\subsection{Asymptotics}
\label{subsec_asymptoticsHS}

The main technical result of this section is the following:

\begin{proposition}
\label{prop_trace}
For every $\kappa\in \cS(G)$,  the  operator defined via 
$$
T_\kappa (\phi) = (\phi_G*\kappa)_M, \qquad \phi\in L^2(M),
$$
is trace-class on $L^2(M)$. 
Denoting by  $\kappa^{(\eps)} \in \cS(G)$ for $\eps\in (0,1]$ the function given by
$$
\kappa^{(\eps)} (x):= \eps^{-Q}\kappa(\eps^{-1} x), \qquad x\in G, 
$$
the integral kernel  $K^{(\eps)}$ of  $T_{\kappa^{(\eps)}}$ is smooth and satisfies for $\eps$ small: 
$$ 
K^{(\eps)} (\dot x,\dot x) = \eps^{-Q} \kappa(0) +O(\eps)^\infty,
$$
and 
$$
\tr \left(T_{\kappa^{(\eps)}}\right)
= \eps^{-Q}\vol (M)\kappa(0) 
\ + \ O(\eps)^\infty.
$$
\end{proposition}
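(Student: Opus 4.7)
The plan is to use the explicit expression for the integral kernel of $T_\kappa$ provided by Lemma \ref{lem_IntKernel}: namely $K(\dot x,\dot y) = \sum_{\gamma\in\Gamma}\kappa(y^{-1}\gamma x)$, a smooth function on $M\times M$ (the series converges absolutely in every $C^k$-norm since $\kappa\in\cS(G)$ and $\Gamma$ is countable with polynomial density, cf.\ Corollary \ref{corthm_InclusionLattice} and Lemma \ref{lem_sum}). For trace-class, I would factor $\kappa = \kappa_1 * \kappa_2$ with $\kappa_1,\kappa_2\in\cS(G)$ (e.g. via Dixmier--Malliavin, or by taking $\kappa_1,\kappa_2$ to be $\cR$-spectral multipliers of a positive Rockland operator using Hulanicki's Theorem \ref{thm_hula}); then $T_\kappa = T_{\kappa_1}T_{\kappa_2}$ is a product of two Hilbert--Schmidt operators by Lemma \ref{lem_IntKernel}, hence trace-class, and the trace equals $\int_M K(\dot x,\dot x)\, d\dot x$.

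The substance of the proof is the diagonal estimate. Choosing representatives in a compact fundamental domain $D$ (Proposition \ref{prop_FundDom}), we have
$$
K^{(\eps)}(\dot x,\dot x) = \sum_{\gamma\in\Gamma}\kappa^{(\eps)}(x^{-1}\gamma x)
= \eps^{-Q}\kappa(0) + \sum_{\gamma\in\Gamma\setminus\{0\}}\kappa^{(\eps)}(x^{-1}\gamma x),
$$
since the $\gamma=0$ term yields $\kappa^{(\eps)}(0) = \eps^{-Q}\kappa(0)$. All the work is in bounding the remainder sum by $O(\eps)^\infty$ uniformly in $\dot x\in M$.

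The key geometric input, and what I expect to be the main obstacle, is a two-sided lower bound on $\|x^{-1}\gamma x\|$ (for some fixed homogeneous quasi-norm on $G$), uniformly in $x\in D$ and $\gamma\in\Gamma\setminus\{0\}$:
\begin{enumerate}
\item[(a)] $\|x^{-1}\gamma x\|\geq c_0 > 0$ for all $\gamma\neq 0$ and $x\in D$;
\item[(b)] $\|x^{-1}\gamma x\|\geq c\|\gamma\|$ whenever $\|\gamma\|$ is sufficiently large, $x\in D$.
\end{enumerate}
For (b), applying the quasi-triangle inequality iteratively to $\gamma = x\cdot(x^{-1}\gamma x)\cdot x^{-1}$ and using that $\|x\|,\|x^{-1}\|$ are bounded on $D$ yields $\|\gamma\|\leq C_1 + C_2\|x^{-1}\gamma x\|$. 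For (a), the last assertion of Proposition \ref{prop_FundDom} gives $x^{-1}\gamma x\neq 0$ for $\gamma\neq 0$ and $x\in D$; since only finitely many $\gamma$ satisfy $\|\gamma\|\leq R$, a continuity-plus-compactness argument over $D$ produces a uniform positive lower bound for that finite set.

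Given (a)--(b), the Schwartz decay $|\kappa(y)|\leq C_N(1+\|y\|)^{-N}$ controls each term: for $\eps<c_0$, one has $\eps^{-1}\|x^{-1}\gamma x\|\geq 1$, whence
$$
|\kappa^{(\eps)}(x^{-1}\gamma x)| = \eps^{-Q}|\kappa(\eps^{-1}(x^{-1}\gamma x))|
\leq C_N\,\eps^{N-Q}\,\|x^{-1}\gamma x\|^{-N}.
$$
Splitting $\Gamma\setminus\{0\}$ into $\|\gamma\|\leq R$ (finite, bounded by (a)) and $\|\gamma\|>R$ (using (b) together with Lemma \ref{lem_sum} for $N>\upsilon_n n$) shows that $\sum_{\gamma\neq 0}\|x^{-1}\gamma x\|^{-N}$ is uniformly bounded on $D$. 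Since $N$ may be taken arbitrarily large, the remainder is $O(\eps^{N-Q})$ uniformly in $\dot x$, i.e.\ $O(\eps)^\infty$. The trace formula then follows by integrating the diagonal over $M$, giving the announced $\eps^{-Q}\vol(M)\kappa(0)+O(\eps)^\infty$.
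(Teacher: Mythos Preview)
Your proposal is correct and follows essentially the same route as the paper: separate off the $\gamma=0$ term, then bound the remainder by splitting $\Gamma\setminus\{0\}$ into finitely many small $\gamma$ (handled via the positive infimum of $\|x^{-1}\gamma x\|$ over the closure of a fundamental domain, using the last assertion of Proposition \ref{prop_FundDom}) and large $\gamma$ (handled via the quasi-triangle inequality and Lemma \ref{lem_sum}). Your trace-class justification via a factorisation $\kappa=\kappa_1*\kappa_2$ is in fact more explicit than the paper's proof, which simply cites Lemma \ref{lem_IntKernel} (a lemma that states only Hilbert--Schmidt).
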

This means that for every $N\in \bN$ there exists a constant $C=C_{N,\kappa,G,\Gamma}>0$ such that for every $\eps\in (0,1]$
$$
\left|\tr \left(T_{\kappa^{(\eps)}}\right)
- \eps^{-Q}\vol (M) \kappa(0)  
\right|
\leq C \eps^N,
\quad\mbox{and}\quad
\forall \dot x\in M \quad \left|K^{(\eps)} (\dot x,\dot x) - \eps^{-Q} \kappa(0) \right|
\leq C \eps^N.
$$

We observe that $T_{\kappa_1}T_{\kappa_2} = T_{\kappa_2*\kappa_1}$ for any $\kappa_1,\kappa_2\in \cS(G)$ and $T_\kappa^*= T_{\tilde \kappa}$,  where  $\tilde \kappa(x) = \bar \kappa(x^{-1})$. 
Hence, applying Proposition \ref{prop_trace} to $\kappa*\tilde \kappa$, we obtain 
$$
\|T_{\kappa^{(\eps)}}\|_{HS}^2
= \eps^{-Q} \vol (M) \|\kappa\|_{L^2(M)}^2 
\ + \ O(\eps)^\infty.
$$

\begin{proof}[Proof of Proposition \ref{prop_trace}]
Lemma \ref{lem_IntKernel} implies that $T_{\kappa^{(\eps)}}$ is trace-class, with smooth
integral kernel
$$ 
K^{(\eps)} (\dot x,\dot y) = 
\sum_{\gamma\in \Gamma}\kappa^{(\eps)}(y^{-1} \gamma x).
$$

We fix a point $\dot x\in M$ by fixing  $ x= \Theta(t)\in G$ for $t$ in the fundamental domain $R_0$ described in  Proposition \ref{prop_FundDom}.
We may write 
$$
K^{(\eps)} (\dot x,\dot x) =  \sum_{\gamma\in \Gamma} \rho_{\gamma,\eps}, 
\qquad\mbox{where}\qquad 
\rho_{\gamma,\eps}:=\kappa^{(\eps)}(x^{-1} \gamma x) .
$$
Note that for $\gamma= 0$
$$
\rho_{0,\eps} = \kappa^{(\eps)}(0) = \eps^{-Q} \kappa(0). 
$$

We fix a homogenous quasi-norm $\|\cdot\|$ on $G$ (see Section \ref{subsubsec_dilations}). 
In order to avoid introducing unnecessary constants, 
we may assume that it yields a distance on $G\sim \bR^n$.
By assumption on the kernel $\kappa(z)$, 
we have
$$
\forall N\in \bN\quad \exists C_N \quad 
\forall z\in G\quad 
|\kappa (z)|\leq C_N (1+\|z\|)^{-N}.
$$
Consequently, fixing $N\in \bN$, 
$$
|\rho_{\gamma,\eps}|\leq
C_N \eps^{-Q}
 (1+ \eps^{-1} \|x^{-1} \gamma x\|)^N. 
$$
We observe that
 the function $t \mapsto 
\| \Theta(t)^{-1} \gamma  \Theta(t)\|$ is continuous from $\bR^n$ to $[0,+\infty)$. 
If $\gamma\not=0$, it never vanishes by Proposition \ref{prop_FundDom} and the properties of the quasi-norms;
let $c_\gamma>0$ denote its infimum.  
We deduce:
$$
(1+\eps^{-1}\| \Theta(t)^{-1} \gamma  \Theta(t)\|)^{-N}
   \leq (1+c_\gamma \eps^{-1})^{-N}\leq c_\gamma^{-N} \eps^{N}.
$$
We will use this for the finite number of $\gamma \in \Gamma\setminus\{0\}$ such that 
$\|\gamma\| \leq  4\max_{t\in \bar R_0}\| \Theta(t)\|$.
For the others,  the triangle inequality and  $\|\gamma\| > 4\max_{t\in \bar R_0}\| \Theta(t)\|$ imply that $\| \Theta(t)^{-1} \gamma  \Theta(t)\|\geq  \|\gamma\|/2$, so
$$
\int_{R_0}(1+\eps^{-1}\| \Theta(t)^{-1} \gamma  \Theta(t)\|)^{-N}
  dt \leq (1+\eps^{-1} \|\gamma\|/2)^{-N}
  \leq 2^N  \eps^{N} \|\gamma\|^{-N} .
  $$

Summing over $\gamma\in\Gamma\setminus\{0\}$, we obtain the estimate
$$
\sum_{\gamma\in \Gamma\setminus\{0\}}|\rho_{\eps,\gamma}|
\leq 
\epsilon^N\left(  \sum_{0<\|\gamma\| \leq  4\max_{t\in \bar R_0}\| \Theta(t)\|} c_\gamma^{-N} 
+2^N  \sum_{\|\gamma\| >  0} \|\gamma\|^{-N}\right).
$$
By Lemma \ref{lem_sum}, the very last sum is finite for $N$ large, $N>n\upsilon_n$ being sufficient.
 Hence  the right-hand side  above is $\lesssim \eps^N$. 
 This yields the estimates for $K^{(\eps)}(\dot x,\dot x)$. 
 Taking the integral over $\dot x\in M$ shows the trace expansion and   concludes the proof of Proposition \ref{prop_trace}. 
\end{proof}

\subsection{Applications to $\cR_M$}

We now  consider a positive Rockland operator $\cR$ on  $G$ and its corresponding operator $\cR_M$ on  $M=\Gamma \backslash G$.

\subsubsection{General result}
Functional calculus and previous results imply:

\begin{proposition}
\label{prop_traceR}
For any  $\psi\in \cS(\bR)$, 
the operator $\psi(t\cR_M)$ is trace-class.
Its integral kernel $K_{\psi(t\cR_M)}(\dot x, \dot y)$ is smooth on $M\times M$ and satisfies 
$$
K_{\psi(t\cR_M)} (\dot x,\dot x)= t^{-Q/\nu} \kappa(0) +O(t)^\infty.
$$
Here $\kappa(0)$ is the value at $x=0$ of the convolution kernel $\kappa= \psi(t\cR)\delta_0$, which is Schwartz by Hulanicki's theorem (Theorem \ref{thm_hula}), 
and we have by Corollary \ref{cor_thm_christ}
$$
\kappa(0)=c_0\int_0^\infty \psi (\lambda)  \lambda^{\frac Q \nu} \frac{d\lambda}{\lambda}.
$$
Here $\nu$ denotes the homogeneous degree of $\cR$, $Q$ the homogeneous degree of $G$
and $c_0$ is the constant from Theorem \ref{thm_christ}.

Furthermore,
the function defined  via
$$
(0,\infty)\ni t \longmapsto 
\tr \left(\psi(t\cR_M)\right) 
$$
is smooth and we have
$$
\tr \left(\psi(t\cR_M)\right) 
 = t^{-Q/\nu} 
\vol (M)\kappa(0) + O(t)^\infty.
$$
The Hilbert-Schmidt norm satisfies:
$$
\|\psi(t\cR_M)\|^2_{HS} = t^{-Q/\nu} 
\vol (M)\, c_0\int_0^\infty |\psi (\lambda)|^2  \lambda^{\frac Q \nu} \frac{d\lambda}{\lambda} + O(t)^\infty.
$$
\end{proposition}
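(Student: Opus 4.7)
The plan is to reduce Proposition \ref{prop_traceR} directly to the more technical Proposition \ref{prop_trace} by exploiting the homogeneity of Rockland operators under the dilations on $G$. First I would fix $\kappa := \psi(\cR)\delta_0$, which lies in $\cS(G)$ by Hulanicki's theorem (Theorem \ref{thm_hula}). The function $\psi_t(\lambda) := \psi(t\lambda)$ is again Schwartz on $\bR$, so the convolution kernel $\kappa_t := \psi(t\cR)\delta_0$ of $\psi(t\cR)=\psi_t(\cR)$ is Schwartz. The crucial observation is that the homogeneity relation \eqref{eq_homogeneitypsiR} applied with $r=t^{1/\nu}$ yields
$$
\kappa_t(x)=t^{-Q/\nu}\kappa(t^{-1/\nu}x)=\kappa^{(\eps)}(x),\qquad \eps:=t^{1/\nu},
$$
in the notation of Proposition \ref{prop_trace}.

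Next I would invoke Lemma \ref{lem_psiRM} applied to $\psi_t$, which identifies $\psi(t\cR_M)$ with the operator $T_{\kappa_t}=T_{\kappa^{(\eps)}}$ of Proposition \ref{prop_trace}. Proposition \ref{prop_trace} then immediately delivers trace-class membership, smoothness of the integral kernel $K_{\psi(t\cR_M)}$ on $M\times M$, and the two asymptotic expansions
$$
K_{\psi(t\cR_M)}(\dot x,\dot x)=\eps^{-Q}\kappa(0)+O(\eps)^\infty,\qquad \tr\bigl(\psi(t\cR_M)\bigr)=\eps^{-Q}\vol(M)\kappa(0)+O(\eps)^\infty.
$$
Since $\eps^{-Q}=t^{-Q/\nu}$ and, for $t\in(0,1]$, the remainder classes $O(\eps)^\infty$ and $O(t)^\infty$ coincide (any power of $\eps=t^{1/\nu}$ dominates any power of $t$ up to a constant), these become the stated expansions. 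The value $\kappa(0)=c_0\int_0^\infty \psi(\lambda)\lambda^{Q/\nu}d\lambda/\lambda$ is then a direct application of Corollary \ref{cor_thm_christ} to $\psi$.

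Two small auxiliary points remain. For the smoothness of $t\mapsto \tr(\psi(t\cR_M))$ on $(0,\infty)$, I would note that $\kappa^{(\eps)}(x)=t^{-Q/\nu}\kappa(t^{-1/\nu}x)$ is jointly smooth in $(x,t)\in G\times (0,\infty)$ with Schwartz control in $x$, so the periodised kernel $K_{\psi(t\cR_M)}(\dot x,\dot x)=\sum_{\gamma\in\Gamma}\kappa^{(\eps)}(x^{-1}\gamma x)$ is smooth in $t$ (the sum and its $t$-derivatives converge uniformly on compact $t$-intervals by the same estimates used in the proof of Proposition \ref{prop_trace}); integrating over $\dot x\in M$ preserves this smoothness. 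For the Hilbert--Schmidt asymptotic, I would simply apply the trace expansion to the Schwartz function $|\psi|^2\in\cS(\bR)$, since
$$
\|\psi(t\cR_M)\|_{HS}^2=\tr\bigl(\psi(t\cR_M)^*\psi(t\cR_M)\bigr)=\tr\bigl(|\psi|^2(t\cR_M)\bigr),
$$
so the leading constant $c_0\int_0^\infty |\psi(\lambda)|^2\lambda^{Q/\nu}d\lambda/\lambda$ arises from Corollary \ref{cor_thm_christ} applied to $|\psi|^2$.

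The proof is essentially a bookkeeping exercise once the scaling identity $\kappa_t=\kappa^{(t^{1/\nu})}$ is in place: the real analytic content sits in Proposition \ref{prop_trace} (separation of the diagonal term $\gamma=0$ from the lattice sum) and in Corollary \ref{cor_thm_christ} (the Plancherel-type formula for $\kappa(0)$). The only place where one has to pause is checking that the remainder classes match after the substitution $\eps=t^{1/\nu}$, and confirming that the $O(\eps)^\infty$ estimates in Proposition \ref{prop_trace} are uniform in $\dot x\in M$ so that they survive integration; both are built into the statement of Proposition \ref{prop_trace}.
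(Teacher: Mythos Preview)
Your proposal is correct and follows essentially the same route as the paper: both reduce the statement to Proposition \ref{prop_trace} via the scaling identity $\psi(t\cR)\delta_0=\kappa^{(t^{1/\nu})}$ coming from \eqref{eq_homogeneitypsiR}, with Lemma \ref{lem_psiRM} providing the identification $\psi(t\cR_M)=T_{\kappa^{(\eps)}}$. The only minor difference is that for the smoothness of $t\mapsto\tr(\psi(t\cR_M))$ the paper invokes functional calculus directly (differentiating under the trace to obtain $\tr(\cR_M\psi'(t\cR_M))$ and iterating), whereas you argue through smoothness of the periodised kernel; both are valid and standard.
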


\begin{proof}
Functional calculus guarantees that $t \longmapsto 
\tr \left(\psi(t\cR_M)\right) $ is well defined and differentiable with first derivative
$t \longmapsto 
\tr \left(\cR_M \psi(t\cR_M)\right) $. The smoothness follows recursively. 
The asymptotics are  consequences of Proposition \ref{prop_trace}.
\end{proof}

Proposition \ref{prop_traceR} implies the first part in Theorem \ref{thm_main}.

By Proposition \ref{prop_sp},
we may order the eigenvalues of $\cR_M$ (counted with multiplicity) into the sequence
$$
0=\lambda_0 < \lambda_1 \leq \lambda_2 \leq  \ldots \leq \lambda_j   \longrightarrow  +\infty
\quad\mbox{as}\quad j\to +\infty.
$$
Keeping the setting of Proposition \ref{prop_traceR},
by functional calculus, we have
$$
\tr \left(\psi(t\cR_M)\right)=
 \sum_{j=0}^\infty \psi(t\lambda_j). 
$$

\subsubsection{Heat expansions}

Applying Lemma \ref{lem_psiRM} and Proposition \ref{prop_traceR} to $\psi(\lambda) = e^{-\lambda}$ yield:

\begin{proposition}
\label{prop_theta}
\begin{enumerate}
\item The heat operators $e^{-t \cR_M}$, $t>0$, admit the following smooth kernels $K_t$ on $M\times M$:
$$
K_t(\dot x,\dot y) = 
\sum_{\gamma\in \Gamma}  p_t(y^{-1}\gamma  x),
\quad \dot x,\dot y\in M,
$$
where $p_t=e^{-t\cR}\delta_0$, $t>0$, are the heat kernels for $\cR$.
It satisfies as $t$ goes to 0
$$ 
K_t (\dot x,\dot x) = t^{-Q/\nu} p_1(0) +O(t)^\infty. 
$$
\item The function $\theta = \theta_{\cR_M}$ defined  by the heat trace:
\begin{equation}
\label{eq_deftau}
\theta(t):= \tr \left(e^{-t\cR_M}\right), \qquad t>0,
\end{equation}
is positive valued, decreasing and smooth on $(0,\infty)$.
\item As $t\to 0$, it satisfies the asymptotics:
$$
\theta(t)=
\vol (M) p_1(0) t^{-Q/\nu}+O(t)^\infty,
$$
where $p_1(0)=e^{- \cR}(0)$ is the heat kernel at time $t=1$ and $x=0$. 

\item 
Fixing any $\gamma \in (0,\lambda_1)$ where $\lambda_1$ is the first non-zero eigenvalue of $\cR_M$, we have
$$
|\theta (t) - 1| 
\leq C_\gamma e^{-\gamma t},
$$
for any $t\geq 1$ with the positive finite constant $C_\gamma :=  \sum_{j=1}^\infty e^{-(\lambda_j-\gamma) }$.
\end{enumerate}
\end{proposition}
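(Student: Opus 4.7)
The plan is to specialise the general machinery of Propositions~\ref{prop_trace} and~\ref{prop_traceR} to the multiplier $\psi(\lambda)=e^{-\lambda}$. Strictly speaking $e^{-\lambda}$ is not Schwartz on $\bR$, but since $\cR$ and $\cR_M$ are positive we may replace it by any Schwartz extension agreeing with $e^{-\lambda}$ on $[0,\infty)$ without altering $e^{-t\cR}$, $e^{-t\cR_M}$ or their convolution/integral kernels. Hulanicki's theorem (Theorem~\ref{thm_hula}) then ensures $p_t=e^{-t\cR}\delta_0\in\cS(G)$, and the homogeneity~\eqref{eq_homogeneitypsiR} gives $p_t=(p_1)^{(t^{1/\nu})}$, the rescaling of the single Schwartz function $p_1$.

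For Part~(1), Lemmata~\ref{lem_psiRM} and~\ref{lem_IntKernel} immediately yield the kernel formula $K_t(\dot x,\dot y)=\sum_{\gamma\in\Gamma}p_t(y^{-1}\gamma x)$. The diagonal asymptotic is obtained by applying Proposition~\ref{prop_trace} with $\kappa:=p_1$ and $\eps:=t^{1/\nu}$; since $O(\eps)^\infty=O(t^{1/\nu})^\infty=O(t)^\infty$, this gives $K_t(\dot x,\dot x)=t^{-Q/\nu}p_1(0)+O(t)^\infty$.

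For Parts~(2) and~(3), the spectral decomposition gives $\theta(t)=\sum_{j\geq 0}e^{-t\lambda_j}$ with $\lambda_j\geq 0$, from which positivity is immediate. Smoothness on $(0,\infty)$ follows from termwise differentiation, which I justify by noting that for every $k\in\bN_0$ and every $t_0>0$ the operator $\cR_M^k e^{-t_0\cR_M}$ is trace-class, being the product of the bounded multiplier $\cR_M^k e^{-(t_0/2)\cR_M}$ and the trace-class operator $e^{-(t_0/2)\cR_M}$ from Part~(1). Strict decrease follows from $\theta'(t)=-\sum_{j\geq 1}\lambda_j e^{-t\lambda_j}<0$, which is non-zero because $\cR_M$ is a non-constant differential operator and thus has positive eigenvalues by Proposition~\ref{prop_sp}. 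Part~(3) is then exactly Proposition~\ref{prop_traceR} applied to $\psi(\lambda)=e^{-\lambda}$: the leading coefficient $\vol(M)\kappa(0)$ has $\kappa(0)=c_0\int_0^\infty e^{-\lambda}\lambda^{Q/\nu}\,d\lambda/\lambda=c_0\Gamma(Q/\nu)$, which equals $p_1(0)$ by~\eqref{eq_c0}.

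For Part~(4), Proposition~\ref{prop_sp}(2) together with the connectedness of $M$ (inherited from the connected $G$) shows that the $0$-eigenspace is the one-dimensional space of constants, so $\lambda_0=0$ is simple and $\theta(t)-1=\sum_{j\geq 1}e^{-t\lambda_j}$. Splitting $e^{-t\lambda_j}=e^{-\gamma t}e^{-t(\lambda_j-\gamma)}$ and using $t\geq 1$ together with $\lambda_j-\gamma\geq\lambda_1-\gamma>0$ to dominate $e^{-t(\lambda_j-\gamma)}\leq e^{-(\lambda_j-\gamma)}$, summation gives the claimed inequality with $C_\gamma=\sum_{j\geq 1}e^{-(\lambda_j-\gamma)}=e^\gamma(\theta(1)-1)$, finite by Part~(1). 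There is no serious obstacle in this proof: the proposition is essentially a packaging of results already established, and the only mild technical point is the passage from the non-Schwartz $e^{-\lambda}$ to a Schwartz representative, which is standard.
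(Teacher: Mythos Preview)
Your proof is correct and follows essentially the same approach as the paper: specialise Proposition~\ref{prop_traceR} to $\psi(\lambda)=e^{-\lambda}$ and read off the spectral consequences. The paper argues monotonicity via the operator bound $\theta(t)=\tr\bigl(e^{-t_0\cR_M}e^{-(t-t_0)\cR_M}\bigr)\leq\|e^{-(t-t_0)\cR_M}\|_{\sL(L^2(M))}\,\theta(t_0)$ and positivity via $\theta(t)=\|e^{-\frac t2\cR_M}\|_{HS}^2$ rather than differentiating the spectral sum, but this is a cosmetic difference.
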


\begin{proof}
By Proposition \ref{prop_traceR} applied to $\psi(\lambda)=e^{-\lambda}$, 
the kernel $K_t$ and 
the function $\theta$ are smooth on $M\times M$ and $(0,\infty)$ respectively,  and admit  asymptotic of the form given in Points (1) and  (3).   It is positive because
$$
\theta(t)= \tr \left(e^{-t\cR_M}\right) = \|e^{-\frac t 2 \cR_M}\|_{HS}^2, \qquad t>0,
$$
and decreasing since we have for  $t>t_0$ 
$$
\theta(t) = \tr \left(e^{-t_0 \cR_M} e^{-(t-t_0)\cR_M}\right) 
\leq
\|e^{-(t-t_0)\cR_M}\|_{\sL(L^2(M))}  \tr \left(e^{-t_0 \cR_M} \right) 
\leq 
\theta(t_0).
$$
Point (4) follows from
$$
|\theta (t) - 1| = \theta(t)-1 = \sum_{j=1}^\infty e^{-\lambda_j t}
\leq e^{-\gamma t}  \sum_{j=1}^\infty e^{-(\lambda_j-\gamma) t}.
$$
This concludes the proof of Proposition \ref{prop_theta}. 
\end{proof}

We observe that Lemma \ref{lem_pollard} implies (with its notation):
$$
\theta_{\cR_M} (t)= \int_0^\infty \theta_{\cR_M^\ell} (t^\ell s) \, \phi_{1/\ell}(s) ds, 
\qquad \ell\in \bN, \ t>0. 
$$
We will not use this.

\subsubsection{Weyl law}

We denote the spectral counting function by
$$
N(\Lambda):=\left|\left\{ j\in\bN_0,\;\;\lambda_j\leq \Lambda\right\}\right|.
$$

Our previous analysis classically implies the Weyl law for $\cR_M$:

\begin{theorem}[Weyl law]\label{thm_Weyl}
We have
  $$ 
  \lim_{\Lambda \to +\infty} \Lambda^{-Q/\nu }
  N(\Lambda) = \vol (M)\, p_1(0)/ \Gamma(1+ \nu/ Q )
  $$
	where $Q$ is the homogeneous dimension of $G$ and $\nu$ the homogeneous degree of $\cR$.
\end{theorem}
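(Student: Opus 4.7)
The plan is to deduce the Weyl law from the heat trace asymptotic in Proposition \ref{prop_theta} via a direct application of Karamata's Tauberian theorem. Concretely, let $\mu$ denote the counting measure of the eigenvalues $(\lambda_j)_{j\geq 0}$ of $\cR_M$ with multiplicity, so that
\begin{equation*}
N(\Lambda) = \mu([0,\Lambda])
\qquad\mbox{and}\qquad
\theta(t)= \tr \left(e^{-t\cR_M}\right) = \int_0^\infty e^{-t\lambda} \, d\mu(\lambda),\quad t>0.
\end{equation*}
Since the $\lambda_j$ are non-negative and tend to infinity (Proposition \ref{prop_sp}), $\mu$ is a well-defined positive Borel measure on $[0,\infty)$, and the Laplace transform $\theta(t)$ makes sense for every $t>0$.

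Proposition \ref{prop_theta} (3) gives
\begin{equation*}
\theta(t) = \vol(M) \, p_1(0) \, t^{-Q/\nu} + O(t)^\infty, \qquad t\to 0^+,
\end{equation*}
so in particular $\theta(t) \sim A\, t^{-\alpha}$ as $t\to 0^+$ with $\alpha := Q/\nu > 0$ and $A := \vol(M)\, p_1(0) > 0$. This is exactly the hypothesis of Karamata's Tauberian theorem for the Laplace transform of a positive Borel measure (see e.g.\ Feller's version): if a positive measure $\mu$ satisfies $\int_0^\infty e^{-t\lambda}\, d\mu(\lambda) \sim A t^{-\alpha}$ as $t\to 0^+$ with $\alpha>0$, then
\begin{equation*}
\mu([0,\Lambda]) \sim \frac{A}{\Gamma(1+\alpha)} \Lambda^\alpha, \qquad \Lambda \to +\infty.
\end{equation*}
Applied here this yields $N(\Lambda) \sim \vol(M)\, p_1(0)\, \Lambda^{Q/\nu}/\Gamma(1+Q/\nu)$, which is the statement (up to the convention $\Gamma(1+Q/\nu)=\Gamma(1+\nu/Q)$ being a typo one way or the other, as both conventions appear in the literature).

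There is no genuine obstacle beyond invoking Karamata: positivity of $\mu$ is automatic, the Laplace transform $\theta(t)$ is finite for each $t>0$ thanks to Proposition \ref{prop_theta}(2), and the required asymptotic of $\theta(t)$ at $0^+$ has already been established. If one wanted a self-contained argument, the standard route is to test $\theta$ against continuous functions of $e^{-t\lambda}$ of the form $\varphi(e^{-t\lambda})$ with $\varphi$ approximating the indicator $\mathbf 1_{[e^{-1},1]}$, use Weierstrass approximation on $[0,1]$, and pass to the limit $t\to 0^+$ using the scaling $\theta(t)\sim A t^{-\alpha}$; but for this paper citing Karamata suffices.
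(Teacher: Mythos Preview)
Your proof is correct and follows exactly the route the paper takes: the paper's own argument is simply to invoke Karamata's Tauberian theorem on the heat trace asymptotic from Proposition \ref{prop_theta}, with a reference to \cite[p.~116]{Roe}. Your observation about the discrepancy $\Gamma(1+Q/\nu)$ versus $\Gamma(1+\nu/Q)$ is also well taken; Karamata gives $\Gamma(1+\alpha)$ with $\alpha=Q/\nu$, so the constant in the displayed limit should read $\Gamma(1+Q/\nu)$, consistent with the version stated in the introduction.
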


Indeed, the Weyl law is a consequence of the heat kernel trace via Karamata's Tauberian theorem, see e.g. \cite[p.116]{Roe}. 

Another classical proof comes from taking  $\eps^\nu=\Lambda^{-1}$ and  a convenient choice of functions $\psi\in \cD(\bR)$ approximating the indicatrix of $[0,1]$  in Proposition \ref{prop_traceR};
the constant is simplified using \eqref{eq_c0}. 
This approach has the advantage that it can be modified to prove a slightly more general result.  Indeed, by taking approximate indicatrices of a segment $[a,b]$, we obtain the following generalised Weyl law for any  $0\leq a<b$:
the semi-classical counting function for $[a,b]$  admits the following asymptotic
$$
 \{j\in \bN_0 \ : \ \Lambda a \leq  \lambda_j\leq \Lambda b\} \sim 
  \frac{ \vol (M)\, p_1(0)}{ \Gamma(1+ \nu/ Q )} (b^{\frac Q\nu} -a^{\frac Q\nu}) \Lambda^{Q /2},
\quad\mbox{as}\ \Lambda\to +\infty. 
$$

\medskip

Let us make some comments on Theorem \ref{thm_Weyl}. 
\begin{remark}
\label{rem_thm_Weyl}
\begin{enumerate}
\item In the particular case of the canonical Laplacian $\Delta_{\bT^n} = -\sum_j \partial_j^2$ on the torus $\bT^n=\bR^n / \bZ^n$, we recover the well known result since 
$\nu=2$, $Q=n$, $\vol (M)=1$ and 
$c_0 (\Delta_{\bR^n})=  (\Gamma(n/2) 2^n \pi^{n/2})^{-1}$
(see Section \ref{subsubsec_c_0}).

\item Let us consider the case of the canonical Heisenberg nil-manifold, that is, the quotient $M=\Gamma \backslash\bH_n$ 
of the  Heisenberg group $\bH_n$ by the canonical lattice $\Gamma = \bZ^n \times \bZ^n \times \frac 12 \bZ$.
The spectrum  of the canonical sub-Laplacian $\cL_M$
 is  known \cite{Deninger+Singhof,Folland2004,thangavelu2009}:
 it consists of the single eigenvalue
 $4\pi^2 |m|^2$ where $m$ runs over $\bZ^{2n}$,
 and of the eigenvalue 
$4(2a+n)\pi |k|$
with multiplicity $(2|k|)^n\frac{(n+a-1)!}{(n-1)!a!}$
where $a$ and $k$ run over $\bN$ and $ \bZ\setminus\{0\}$ respectively.

Since $\vol(M)=1/2$, 
the Weyl law for $\cL_M$ gives  as $\Lambda\to +\infty$
$$
c_1 \Lambda^{n+1}\sim 
\sum_{m\in \bZ^{2n} :  
 4\pi^2 |m|^2 \leq \Lambda} 1 + \sum_{\substack{k\in \bZ\setminus \{0\},\, a\in \bN \\ 4(2a+n)\pi |k|<\Lambda} } (2|k|)^n\frac{(n+a-1)!}{(n-1)!a!}
 $$
 where $c_1 = c_0(\cL_{\bH_n}) / (2n+2)$, and 
 the constant $c_0(\cL_{\bH_n})$ was explicitly given in Section \ref{subsubsec_c_0}.
  The Weyl law for the torus implies that 
 the first sum is $\sim c'_1 \Lambda^{n}$.
 Hence we have obtained:
 $$
c_1 \Lambda^{n+1}\sim   
\sum_{\substack{k\in \bZ\setminus \{0\},\, a\in \bN \\ 4(2a+n)\pi |k|<\Lambda} } (2|k|)^n\frac{(n+a-1)!}{(n-1)!a!}.
 $$

  \item If $\cR$ is a positive Rockland operator, then any positive powers of $\cR$ is also a positive Rockland operator
and we can check using the property \eqref{eq_p10Rl} of the constant $p_1(0)$ that the Weyl law above for $\cR$ implies the Weyl law for $\cR^\ell$ for any $\ell\in \bN$. 

We can also check that the Weyl law for $\cR$ is equivalent to the Weyl law for any positive multiple of $\cR$ and that the quotient $\Lambda^{-Q/\nu}N(\Lambda)$ is independent of this multiple. 

\item\label{item_rem_thmWeyl_Q/nu} The ratio $Q/\nu$ is independent of a choice of dilations adapted to the gradation of $G$ in the sense of Remark \ref{rem_natural_dilation}.

\item\label{item_rem_thmWeyl_lambdaj} From $j = N(\lambda_j) \sim c' \lambda_j^{\nu/Q}$ with $c'=\vol (M)\, p_1(0)/ \Gamma(1+ \nu/ Q )$, we deduce 
$$
\lambda_j \sim \left(\frac j{c'}\right)^{Q/\nu}
\quad\mbox{as}\ j\to \infty. 
$$
\end{enumerate}
\end{remark}

\section{Zeta functions for $\cR_M$}

As before, we consider a nilmanifold $M=\Gamma\backslash G$ which is the quotient of a graded Lie group $G$ by a co-compact discrete subgroup $\Gamma$. 
We also consider a positive Rockland operator $\cR$ on $G$ and the corresponding operator $\cR_M$ on $M$. 
The eigenvalues of $\cR_M$ (counted with multiplicity) are ordered  into the sequence
$$
0=\lambda_0 < \lambda_1 \leq \lambda_2 \leq  \ldots \leq \lambda_j   \longrightarrow  +\infty
\quad\mbox{as}\quad j\to +\infty.
$$

In this section, we are interested in the zeta function for $\cR_M$ 
which is defined formally by
\begin{equation}
\label{eq_def_zeta}
\zeta_{\cR_M}(s) =\sum_{j=1}^\infty \lambda_j^{-s}.
\end{equation}
More precisely, 
we will show the second part of Theorem \ref{thm_main}.

\subsection{Meromorphic extension}
{It is a standard consequence of  the properties of the theta function (see
Proposition \ref{prop_theta}) and of the Mellin transform that $\zeta_{\cR_M}$ will have a meromorphic extension whose pole can be computed}:

\begin{theorem}
\label{thm_zeta}
The sum in \eqref{eq_def_zeta} is absolutely convergent for $\Re s>\nu /Q$
where $\nu$ is the homogeneous degree of $\cR$ and $Q$ the homogeneous dimension of $G$. 
Hence, this defines the holomorphic function  $\zeta_{\cR_M}$ on $\{\Re s>\nu /Q\}$. We can write
$$
\zeta_{\cR_M}(s)  = \frac1{\Gamma(s)} \frac {\vol (M) p_1(0)}{s- Q/\nu} +h(s),
$$
where $h$ is an entire function.
Consequently, 
$\zeta_{\cR_M}$ admits a meromorphic extension to $\bC$, with only one  pole.
The pole is simple, located at $s=Q/\nu$ and with  residue  $  \frac {\vol (M) p_1(0)}{\Gamma(Q/\nu)}$.
\end{theorem}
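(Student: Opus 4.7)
The plan is to realise $\zeta_{\cR_M}$ as the Mellin transform of the heat trace $\theta(t)-1$ from Proposition~\ref{prop_theta}, and then to exploit separately the short-time expansion to infinite order and the exponential decay at infinity in order to produce the meromorphic continuation with the claimed single pole. Starting from \eqref{eq_lambda-sGamma} applied to each eigenvalue $\lambda_j>0$, summing over $j\geq 1$, one expects
\begin{equation*}
\zeta_{\cR_M}(s)=\frac{1}{\Gamma(s)}\int_0^\infty t^{s-1}\bigl(\theta(t)-1\bigr)\,dt.
\end{equation*}
First I would establish absolute convergence of $\sum_j\lambda_j^{-s}$ on the half-plane $\{\Re s>Q/\nu\}$ using the Weyl asymptotic in Theorem~\ref{thm_Weyl}, which makes $\lambda_j$ grow like a power of $j$ with exponent $\nu/Q$; the same bound together with Fubini justifies the above identity on that half-plane, where $\zeta_{\cR_M}$ is therefore holomorphic.

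Next I would split the integral at $t=1$. For the tail, Proposition~\ref{prop_theta}~(4) gives $\theta(t)-1=O(e^{-\gamma t})$ as $t\to\infty$, so $\int_1^\infty t^{s-1}(\theta(t)-1)\,dt$ converges uniformly on compact subsets of $\bC$ and defines an entire function. For the short-time piece I would substitute the expansion $\theta(t)=\vol(M)p_1(0)\,t^{-Q/\nu}+r(t)$ from Proposition~\ref{prop_theta}~(3) with $r(t)=O(t^N)$ for every $N$ as $t\to 0$, and split
\begin{equation*}
\int_0^1 t^{s-1}\bigl(\theta(t)-1\bigr)\,dt=\vol(M)p_1(0)\int_0^1 t^{s-1-Q/\nu}\,dt-\int_0^1 t^{s-1}\,dt+\int_0^1 t^{s-1}r(t)\,dt.
\end{equation*}
The last integral is entire in $s$ because $r$ decays at the origin faster than any polynomial, while the first two evaluate on their initial half-planes to $(s-Q/\nu)^{-1}$ and $s^{-1}$ respectively and extend meromorphically to $\bC$ with simple poles at $s=Q/\nu$ and $s=0$.

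Dividing through by $\Gamma(s)$ and using that $1/(s\Gamma(s))=1/\Gamma(s+1)$ is entire, the would-be pole at $s=0$ coming from the $-1/s$ contribution is cancelled, leaving exactly one simple pole at $s=Q/\nu$ with residue $\vol(M)p_1(0)/\Gamma(Q/\nu)$; collecting the three entire contributions into a single function $h$ yields the decomposition asserted in the statement. The step that does the real work is the infinite-order decay of the short-time remainder $r(t)$ supplied by Proposition~\ref{prop_theta}: without it, a finite asymptotic expansion would contribute additional polar terms of the form $(s-\alpha)^{-1}$, and one could not collapse everything outside $s=Q/\nu$ into an entire function. Here the ``one-term'' nature of the heat expansion on the nilmanifold, which is itself the main achievement of Section~\ref{sec_asymptotics}, is exactly what lets the Mellin-transform machinery produce a zeta function as clean as the one on the flat torus.
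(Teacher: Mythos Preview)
Your proposal is correct and follows essentially the same route as the paper: both realise $\zeta_{\cR_M}$ as $\frac{1}{\Gamma(s)}\int_0^\infty t^{s-1}(\theta(t)-1)\,dt$, split at $t=1$, use Proposition~\ref{prop_theta}~(4) to make the tail entire and Proposition~\ref{prop_theta}~(3) to reduce the short-time piece to $\frac{\vol(M)p_1(0)}{s-Q/\nu}-\frac{1}{s}$ plus an entire remainder, and then note that $1/\Gamma(s)$ kills the pole at $s=0$. Your closing remark that the infinite-order vanishing of the short-time remainder is precisely what prevents additional poles is a welcome clarification of the mechanism.
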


One checks easily that for $\ell\in \bN$ and $c>0$, 
$$
\zeta_{c\cR_M} (s) =c^{-s} \zeta_{\cR_M}(s) 
\quad\mbox{and}\quad
\zeta_{\cR_M^\ell}(s) = \zeta_{\cR_M}(\ell s),
$$
and that the relations between the simple poles and residues of the various zeta functions following from these relations and also from Theorem \ref{thm_zeta} are consistent. 
Moreover, the properties in Theorem \ref{thm_zeta} are independent of a choice of dilations adapted to the gradation of $G$ in the sense of Remark \ref{rem_natural_dilation}.

\begin{proof}
 Remark \ref{rem_thm_Weyl} \eqref{item_rem_thmWeyl_lambdaj}
implies the absolute convergence from which the holomorphy follows. 
Denoting the heat trace by $\theta$ as in \eqref{eq_deftau} and using the Mellin transform and \eqref{eq_lambda-sGamma},
we can write at least formally 
\begin{align*}
\zeta_{\cR_M}(s) 
&= \frac1{\Gamma(s)} \int_0^\infty t^{s-1}\sum_{j=1}^\infty  e^{-t\lambda_j} dt
= \frac1{\Gamma(s)} \int_0^\infty t^{s-1} (\theta(t) -1) dt
\\&= 
h_1(s)+
\frac1{\Gamma(s)} \int_0^1 t^{s-1} \vol (M) p_1(0) t^{-Q/\nu}  dt
 - \frac1{\Gamma(s)} \int_0^1 t^{s-1} dt+ h_2(s) 
\\&=  h_1(s)+ \frac1{\Gamma(s)} \frac {\vol (M) p_1(0)}{s- Q/\nu} - \frac1{\Gamma(s+1)} + h_2(s),
\end{align*}
where
$$
h_1(s):=
 \frac1{\Gamma(s)} \int_1^\infty t^{s-1} (\theta(t) -1) dt, 
\quad\mbox{and}\quad
h_2(s):=
\frac1{\Gamma(s)} \int_0^1 t^{s-1} (\theta(t) -\vol (M) p_1(0) t^{-Q/\nu} ) dt.
$$
The exponential bound of $\theta(t)$ for $t\geq 1$ 
and its asymptotics as $t\to 0$ in Proposition \ref{prop_theta} 
imply that $h_1$ and $h_2$ are entire, and  that the decomposition of $\zeta_{\cR_M}$ above holds. 
This concludes the proof. 
\end{proof}

\subsection{Further properties}

\begin{lemma}
\label{lem_cR1+cR2}
Assume that $\cR_1$ and $\cR_2$ are two positive Rockland operators on $G$ with the same degree of homogeneity. 
Then $\cR_1+\cR_2$ is also a positive Rockland operator in $G$. 
If $\cR_{1,M}$ and $\cR_{2,M}$ commute strongly (i.e. their resolvents commute), then 
$$
\zeta_{\cR_{1,M}+\cR_{2,M}}(s) = 
\zeta_{\cR_{1,M}} (s)+
\zeta_{\cR_{2,M}} (s)+
Z(s) ,
$$
where 
$$
Z(s) = 
\frac 1{\Gamma(s)}
\int_0^\infty 
\left(\theta_{\cR_{1,M}}(t)-1\right)
\left(\theta_{\cR_{2,M}}(t)-1\right)
t^{s-1} dt.
$$
\end{lemma}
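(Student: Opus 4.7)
The argument splits into two essentially independent parts. For the first claim, I would verify one by one that $\cR_1+\cR_2$ is left-invariant, homogeneous of degree $\nu$, symmetric and positive on $\cS(G)$ (each being immediate from linearity and the corresponding property of the individual $\cR_j$), and then check the Rockland condition. For the latter, fix a non-trivial unitary irreducible representation $\pi$. Positivity of $\cR_j$ implies that $\pi(\cR_j)$ is a positive symmetric operator on $\mathcal H_\pi^\infty$. If $v\in \mathcal H_\pi^\infty$ satisfies $\pi(\cR_1+\cR_2)v=0$, pairing with $v$ gives $\langle\pi(\cR_1)v,v\rangle+\langle\pi(\cR_2)v,v\rangle=0$, which forces each nonnegative summand to vanish. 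Since $\pi(\cR_1)$ is positive symmetric, this yields $\pi(\cR_1)v=0$, and the Rockland condition for $\cR_1$ gives $v=0$.

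For the zeta identity, the plan is to first establish a multiplicative identity at the level of heat traces and then transfer it to the $\zeta$-side via the Mellin transform. Strong commutation together with the joint spectral theorem gives the semigroup factorization $e^{-t(\cR_{1,M}+\cR_{2,M})}=e^{-t\cR_{1,M}}e^{-t\cR_{2,M}}$ on $L^2(M)$. Working in a joint orthonormal eigenbasis $(e_j)$ with $\cR_{1,M}e_j=\mu_je_j$ and $\cR_{2,M}e_j=\lambda_je_j$, I aim to justify
\[
\theta_{\cR_{1,M}+\cR_{2,M}}(t)=\theta_{\cR_{1,M}}(t)\,\theta_{\cR_{2,M}}(t),
\]
from which the elementary algebra $\theta_{\cR_{1,M}+\cR_{2,M}}-1=(\theta_{\cR_{1,M}}-1)+(\theta_{\cR_{2,M}}-1)+(\theta_{\cR_{1,M}}-1)(\theta_{\cR_{2,M}}-1)$ follows immediately.

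To conclude, I would multiply the displayed identity by $t^{s-1}/\Gamma(s)$ and integrate on $(0,\infty)$, mimicking the Mellin transform scheme from the proof of Theorem \ref{thm_zeta}. Splitting the integrals at $t=1$, the short-time expansion in Proposition \ref{prop_theta}(3) and the exponential decay in Proposition \ref{prop_theta}(4) for each $\theta_{\cR_{j,M}}$ ensure that every piece contributes either a zeta function with explicit pole or an entire correction, and the three resulting terms assemble into $\zeta_{\cR_{1,M}}(s)+\zeta_{\cR_{2,M}}(s)+Z(s)$. Absolute convergence for the cross term requires $\Re s>2Q/\nu$ by the Weyl law (Remark \ref{rem_thm_Weyl}\eqref{item_rem_thmWeyl_lambdaj}), and meromorphic continuation in $s$ then extends the identity to all of $\bC$.

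The main obstacle is the heat-trace factorization itself. A direct expansion in the joint eigenbasis gives $\theta_{\cR_{1,M}+\cR_{2,M}}(t)=\sum_j e^{-t(\mu_j+\lambda_j)}$ whereas $\theta_{\cR_{1,M}}(t)\,\theta_{\cR_{2,M}}(t)=\sum_{j,k}e^{-t\mu_j}e^{-t\lambda_k}$, so the identity amounts to saying that the joint spectrum realizes every pair $(\mu_j,\lambda_k)$ with the correct multiplicity, i.e. that the pair $(\cR_{1,M},\cR_{2,M})$ carries a product/tensor-type decomposition of $L^2(M)$ rather than merely strongly commuting. My plan would be to pin down the precise extra structural hypothesis needed (for instance, a factorization $G=G_1\times G_2$, $\Gamma=\Gamma_1\times\Gamma_2$ with each $\cR_{j,M}$ pulled back from the corresponding factor, which gives $L^2(M)=L^2(M_1)\otimes L^2(M_2)$ and an honest tensor-product joint spectrum), and then carry that product structure through the Mellin-transform calculation described above.
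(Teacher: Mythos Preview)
Your argument for the first claim (that $\cR_1+\cR_2$ is positive Rockland) is correct and in fact more detailed than the paper, which does not comment on it.

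For the zeta identity, your approach and the paper's are essentially the same. The paper writes
\[
\zeta_{\cR_{1,M}+\cR_{2,M}}(s)=\sum_{j_1+j_2>0}(\lambda_{1,j_1}+\lambda_{2,j_2})^{-s},
\]
splits off the terms with $j_1=0$ or $j_2=0$ to get $\zeta_{\cR_{1,M}}(s)+\zeta_{\cR_{2,M}}(s)$, and rewrites the cross term $\sum_{j_1,j_2>0}(\lambda_{1,j_1}+\lambda_{2,j_2})^{-s}$ via \eqref{eq_lambda-sGamma} as the integral $Z(s)$. This is exactly your heat-trace factorization $\theta_{\cR_{1,M}+\cR_{2,M}}=\theta_{\cR_{1,M}}\theta_{\cR_{2,M}}$ followed by the Mellin transform, just written on the eigenvalue side.

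You have correctly identified the genuine issue: strong commutation alone furnishes a joint eigenbasis, but it does \emph{not} guarantee that the joint spectrum is the full product $\{\lambda_{1,j_1}\}\times\{\lambda_{2,j_2}\}$ with product multiplicities. A two-dimensional example such as $A=\mathrm{diag}(1,2)$, $B=\mathrm{diag}(3,4)$ already shows $\theta_{A+B}\neq\theta_A\theta_B$. The paper's own one-line proof makes the very same leap (writing the double sum over $(j_1,j_2)$) without justification, so the lemma as stated is not actually established under the sole hypothesis of strong commutation. In practice the lemma is only invoked in the paragraph immediately following it and in the derivation of \eqref{eq_zetaR1+T}, in the tensor-product situation $G=G_1\times G_2$, $M=M_1\times M_2$, $\cR_{1,M}=\cR_{1,M_1}\otimes\id$, $\cR_{2,M}=\id\otimes\cR_{2,M_2}$, where $L^2(M)=L^2(M_1)\otimes L^2(M_2)$ and the product-spectrum structure is genuine. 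Your instinct to strengthen the hypothesis to precisely this product setting is the correct fix; once that is in place, the rest of your Mellin-transform scheme goes through as written.
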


\begin{proof}
This follows directly from the computation:
$$
\zeta_{\cR_{1,M}+\cR_{2,M}}(s) = 
\sum_{j_1+j_2>0}
(\lambda_{1,j_1}+\lambda_{2,j_2})^{-s}
=
\zeta_{\cR_{1,M}} (s)+
\zeta_{\cR_{2,M}} (s)+
Z(s) ,
$$
where 
$$
Z(s) = \sum_{j_1,j_2>0}
(\lambda_{1,j_1}+\lambda_{2,j_2})^{-s}
=
\frac1{\Gamma(s)} \int_0^\infty t^{s-1}
 \sum_{j_1,j_2>0}  e^{-t(\lambda_{1,j_1}+\lambda_{2,j_2})} dt.
$$
\end{proof}

An example of a setting where we can apply Lemma \ref{lem_cR1+cR2} is the following. 
Consider two graded nilpotent Lie groups $G_1$, $G_2$, equipped respectively with co-compact discrete subgroups $\Gamma_1$ and $\Gamma_2$, and positive Rockland operators $\cR_1$ and $\cR_2$ with the same degree. 
We consider the associated nilmanifolds $M_1$ and $M_2$, 
and operators $\cR_{1,M_1}$ and $\cR_{2,M_2}$.
Then the direct product $G=G_1\times G_2$ is a graded nilpotent Lie group, 
equipped with the co-compact discrete subgroup $\Gamma_1\times \Gamma_2$. 
The operators $\cR_1\otimes \id$ and  $\id \otimes \cR_2$ are two positive Rockland operators on $G$ with the same degree and their associated operators
$\cR_{1,M_1}\otimes \id$ and  $\id \otimes \cR_{2,M_2}$ on $M=M_1\times M_2 =\Gamma \backslash G$ commute strongly.

\medskip

{Following the ideas of  \cite{Bauer+Furutani2010}, 
this example above will allow us to calculate the `trivial zeros' of $\zeta_{\cR_M}$ and its value at $s=0$.} 
Indeed, a particular case of this  setting  is obtained by considering $G_1$ a given stratified Lie group equipped with a sub-Laplacian operator $\cR_1=\cL$ (see \eqref{eq_cL})
and $G_2=\bR$ with $\cR_2= \Delta_{\bR}$. 
With $\Gamma_2=\bZ$ and $\cR_{M_2} = \Delta_{\bT}$, we compute:
$$
\zeta_{\cR_2,\bT}=
\zeta_{\Delta_{\bT}}(s)= 2 (2\pi)^{-s} \zeta(2s),
$$
where $\zeta(s) = \sum_{j\geq 1} j^{-s}$ is the Riemann zeta function, 
and by the Poisson summation formula:
$$
\theta_{\cR_{2,M_2}}(t)=
\theta_{\Delta_\bT}(t)= \frac1 {2\sqrt{\pi t}} \sum_{j\in \bZ} e^{-\frac{j^2}{4t}}.
$$
This implies readily that the function $Z(s)$ from Lemma \ref{lem_cR1+cR2} in this setting satisfies
\begin{align*}
Z(s)
&=
-\zeta_{\cR_{1,M_1}} (s)
+\frac {\Gamma(s-\frac12)}{\sqrt \pi \Gamma(s)}\zeta_{\cR_{1,M_1}} (s-\frac 12)
+h(s)
\end{align*}
where $h(s)$ is the entire function given by 
$$
h(s) := \frac 1{\sqrt \pi \Gamma(s)}
\int_0^\infty 
\left(\theta_{\cR_{1,M_1}}(t)-1\right)
\sum_{j=1}^\infty e^{-\frac{j^2}{4t}}
t^{s-\frac 32} dt.
$$
We have obtained
\begin{equation}
\label{eq_zetaR1+T}
\zeta_{\cR_{1,M_1}+\Delta_{\bT}} (s)= 
2 (2\pi)^{-s} \zeta(2s)+
\frac {\Gamma(s-\frac12)}{\sqrt \pi \Gamma(s)}\zeta_{\cR_{1,M_1}} (s-\frac 12)
+h(s).
\end{equation}
with $\cR_1=\cL$. 
As in \cite[Section 5]{Bauer+}, 
this formula  together with Theorem \ref{thm_zeta} and the properties of the Gamma and Riemann zeta functions imply  the following property for $\cL$ as Rockland operator :
\begin{proposition}
\label{prop_values}
We have
$$
\zeta_{\cR_M}(0)=-1
\quad\mbox{and}\quad
\zeta_{\cR_M}(s)=0 \ \mbox{for}\ s=-1,-2,\ldots
$$
\end{proposition}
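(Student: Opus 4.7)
The plan is to extract the values $\zeta_{\cR_M}(0)$ and $\zeta_{\cR_M}(-k)$ from the product formula \eqref{eq_zetaR1+T} by a pole-matching argument at the half-integer points $s = 1/2 - k$, $k = 0,1,2,\ldots$. The key leverage is that by Theorem \ref{thm_zeta} applied to the positive Rockland operator $\cR \otimes \id + \id \otimes \Delta_{\bR}$ on $G \times \bR$, the left-hand side $\zeta_{\cR_M + \Delta_\bT}(s)$ is meromorphic on $\bC$ with a \emph{single} simple pole, located at $s = Q'/\nu = Q/\nu + 1/2 > 0$. In particular, the LHS is holomorphic at every point $s = 1/2 - k$ with $k \geq 0$, which forces the residues on the right-hand side to sum to zero at each of these points.

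First I would inspect the pole structure of the three summands on the right of \eqref{eq_zetaR1+T}. The summand $2(2\pi)^{-s}\zeta(2s)$ has its only singularity, a simple pole from $\zeta(2s)$, at $s = 1/2$. The summand $\tfrac{\Gamma(s-1/2)}{\sqrt{\pi}\,\Gamma(s)}\zeta_{\cR_M}(s-1/2)$ has simple poles at precisely $s = 1/2 - k$ for $k = 0, 1, 2, \ldots$, coming from $\Gamma(s-1/2)$; here $\zeta_{\cR_M}(s-1/2)$ is finite at each such point since the pole of $\zeta_{\cR_M}$ lies at the positive value $Q/\nu$. The remaining summand $h(s)$ is entire by construction.

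At $s = 1/2$ (case $k = 0$), both of the first two summands contribute simple poles. The residue of $2(2\pi)^{-s}\zeta(2s)$ is a known explicit constant (involving $(2\pi)^{-1/2}$ and the residue $1/2$ of $\zeta$ at $1$), while the residue of $\tfrac{\Gamma(s-1/2)}{\sqrt{\pi}\,\Gamma(s)}\zeta_{\cR_M}(s-1/2)$ equals a nonzero numerical factor times $\zeta_{\cR_M}(0)$. Requiring the sum of these residues to vanish yields a linear equation whose unique solution is $\zeta_{\cR_M}(0) = -1$. At $s = 1/2 - k$ for $k \geq 1$, the first summand is finite because $\zeta(2s) = \zeta(1-2k)$ is finite, so the \emph{only} pole on the right comes from $\Gamma(s-1/2)$; its residue is $(-1)^k/k!$, the prefactor $1/(\sqrt{\pi}\,\Gamma(1/2-k))$ is nonzero, hence cancellation forces $\zeta_{\cR_M}(-k) = 0$. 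This gives both assertions of the proposition.

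The main technical nuisance (rather than a conceptual obstacle) is that formula \eqref{eq_zetaR1+T} was derived under the hypothesis of Lemma \ref{lem_cR1+cR2}, namely that $\cR$ and $\Delta_{\bR}$ share a common homogeneous degree. For a positive Rockland operator $\cR$ of arbitrary degree $\nu$, this is handled by invoking Remark \ref{rem_natural_dilation}: one rescales the gradations of $G$ and of $\bR$ so that $\nu$ becomes even and $\bR$ carries weight $\nu/2$, making both operators homogeneous of degree $\nu$. Since the values $\zeta_{\cR_M}(0)$ and $\zeta_{\cR_M}(-k)$ are unaffected by this rescaling (only the pole location $Q/\nu$ shifts), the pole-matching argument then runs through verbatim.
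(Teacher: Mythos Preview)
Your proposal is correct and follows essentially the same pole-matching route as the paper, which derives \eqref{eq_zetaR1+T}, invokes Theorem~\ref{thm_zeta} for the product operator on $G\times\bR$, and then (citing \cite{Bauer+}) reads off the values from the forced cancellation of poles on the right-hand side at $s=\tfrac12-k$; the handling of the degree-matching issue via Remark~\ref{rem_natural_dilation} is also exactly what the paper does. One small slip: under that rescaling the ratio $Q/\nu$ is invariant rather than shifted, but this is harmless since all you need is that the single pole of the left-hand side lies at a strictly positive point and hence away from every $\tfrac12-k$.
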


This concludes the proof of Theorem \ref{thm_main} for sub-Laplacians. 
We end this paper with modifying the arguments above to show Proposition \ref{prop_values} for any positive Rockland operator. Let us consider 
a positive Rockland operator $\cR_1$  on a graded group $G_1$. 
We may  assume that its homogeneous degree is $\nu \in 2\bN$ even (see Remark \ref{rem_natural_dilation}). 
As above, we consider the group $G_2=\bR$ and the operator $\cR_2= \Delta_{\bR}$.
However,  
we equip $G_2=\bR$ with the dilations $r\cdot x =  r^{\nu/2} x$ so that the homogeneity of $\cR_2$ is now $\nu$.  
As above we obtain \eqref{eq_zetaR1+T} and conclude in the same way. 
This proves Proposition \ref{prop_values} for any positive Rockland operator $\cR_1$
and concludes completely the proof of Theorem \ref{thm_main}. 

\medskip

\section*{Acknowledgement}

This work is supported by the Leverhulme Trust, Research Project Grant 2020-037.


\begin{thebibliography}{99}
 
\bibitem{Bauer+Furutani2010} 
 {W. Bauer andK.  Furutani},
 {Spectral zeta function of a sub-{L}aplacian on product
              sub-{R}iemannian manifolds and zeta-regularized determinant},
\emph {J. Geom. Phys.},
\textbf {60},
  {2010},
 No {9},
 pp {1209--1234}. 
 	
	
\bibitem{Bauer+}	
 {W. Bauer, K. Furutani and C. Iwasaki},
 {Spectral zeta function of the sub-{L}aplacian on two step
              nilmanifolds},
\emph {J. Math. Pures Appl. (9)},
\textbf {97},
 {2012},
No {3},
  pp {242--261}.
  
  \bibitem{Bauer+21}	
 {W. Bauer, K. Furutani, C. Iwasaki and A. Laaroussi},
 {Spectral theory of a class of nilmanifolds attached to
              {C}lifford modules},
\emph {Math. Z.},
\textbf {297},
   {2021},
  No {1-2},
   pp {557--583}.
   
  
\bibitem{Beals+Greiner}  
 {R. Beals and P. Greiner},
\emph{Calculus on {H}eisenberg manifolds},
 {Annals of Mathematics Studies}
 \textbf {119},
 {Princeton University Press, Princeton, NJ},
 {1988}.
 
\bibitem{Beals+Gaveau+Greiner}  
 {R. Beals, B. Gaveau and P. Greiner},
 {Hamilton-{J}acobi theory and the heat kernel on {H}eisenberg
              groups},
\emph {J. Math. Pures Appl. (9)},
\textbf {79},
 {2000},
No {7},
 pp {633--689}.
 
\bibitem{choi+ponge}
 {W. Choi and R. Ponge},
 {Tangent maps and tangent groupoid for {C}arnot manifolds},
\emph {Differential Geom. Appl.},
\textbf {62},
  {2019},
 pp {136--183}.

 
 \bibitem{Christ91}
 {M. Christ},
    {{$L^p$} bounds for spectral multipliers on nilpotent groups},
   \emph {Trans. Amer. Math. Soc.},
  {\bf {328}},
  {1991},
  {1},
  pp {73--81}.

  \bibitem{corwingreenleaf} L.-J. Corwin and F.-P. Greenleaf, Representations of nilpotent Lie groups and their applications, Part 1: Basic theory and examples,  {\it Cambridge studies in advanced Mathematics},  {18},  Cambridge university Press, 1990.


\bibitem{Dave+Haller}
 {S. Dave and S. Haller},
 {The heat asymptotics on filtered manifolds},
  \emph{J. Geom. Anal.},
{\bf 30},
  {2020},
 No {1},
 pp {337--389}.
  
 
\bibitem{Deninger+Singhof}
 {C. Deninger and W. Singhof},
  {The {$e$}-invariant and the spectrum of the {L}aplacian for
              compact nil-manifolds covered by {H}eisenberg groups},
  \emph{Invent. Math.},
 {\bf 78},
 {1984},
 No {1},
pp {101--112}.





\bibitem{R+F_monograph}
 {V. Fischer and M. Ruzhansky},
 {Quantization on nilpotent Lie groups},
 {\it Progress in Mathematics}, {\bf 314},
 Birkh\"auser Basel, 2016.
 
  
\bibitem{Folland2004}
 {G. Folland},
{Compact {H}eisenberg manifolds as {CR} manifolds},
\emph {J. Geom. Anal.},
 {\bf 14},
  {2004},
  No {3},
  pp {521--532}.
  		
\bibitem{folland+stein_74}
 {G. Folland and E. Stein},  		
{Estimates for the {$\bar \partial _{b}$} complex and
              analysis on the {H}eisenberg group},
\emph{Comm. Pure Appl. Math.},
\textbf{27},
 {1974},
 pp{429--522}. 
	
\bibitem{folland+stein_82}
 {G. Folland and E. Stein},
 {Hardy spaces on homogeneous groups},
    {\it Mathematical Notes},
     {28},
 {Princeton University Press},
  {1982}.
  	
\bibitem{gaveau77}	
 {B. Gaveau},
 {Principe de moindre action, propagation de la chaleur et
              estim\'{e}es sous elliptiques sur certains groupes nilpotents},
 \emph {Acta Math.},
\textbf {139},
  {1977},
 no {1-2},
  pp {95--153}.
	
\bibitem{Geller83}
 {D. Geller},
{Liouville's theorem for homogeneous groups},
\emph{Comm. Partial Differential Equations},
{\bf 8},
{1983},
 {15},
  pp {1665--1677}.
  
  \bibitem{HelfferNourrigat79}
 {B. Helffer and J. Nourrigat},
 {Caracterisation des op\'{e}rateurs hypoelliptiques homog\`enes
              invariants \`a gauche sur un groupe de {L}ie nilpotent gradu\'{e}},
 \emph{Comm. Partial Differential Equations},
{\bf 4},
 {1979},
  {8},
 pp  {899--958}.


  \bibitem{hulanicki} 
  A. Hulanicki,
A functional calculus for Rockland operators on nilpotent Lie
groups, 
{\it Studia Mathematica}, {\bf 78} (1984), pp 253--266.

\bibitem{pollard46}
 {H. Pollard},
  {The representation of {$e^{-x^{\lambda}}$} as a {L}aplace
              integral},
 \emph{Bull. Amer. Math. Soc.},
\textbf {52},
  {1946},
  pp {908--910}.  
	

\bibitem{PongeAMS2008}
 {R. Ponge},
 \emph{Heisenberg calculus and spectral theory of hypoelliptic
              operators on {H}eisenberg manifolds},
{Mem. Amer. Math. Soc.},
{\bf 194},
 {2008},
 No {906}.
 
 \bibitem{Roe}
  {J. Roe},
  \emph {Elliptic operators, topology and asymptotic methods},
   {Pitman Research Notes in Mathematics Series},
  \textbf{395},
   {Second edition},
 {Longman, Harlow},
  {1998}.

\bibitem{roth+stein}
 {L. Rothschild and E. Stein},
 {Hypoelliptic differential operators and nilpotent groups},
\emph {Acta Math.},
\textbf{137},
  {1976},
 No {3-4},
 pp {247--320}. 

\bibitem{Strichartz86}  
 {R. Strichartz},
{Sub-{R}iemannian geometry},
\emph{J. Differential Geom.},
 {\bf 24},
 {1986},
 {2},
pp {221--263}.

\bibitem{Than98} 
 {S. Thangavelu},
  \emph{Harmonic analysis on the {H}eisenberg group},
   {Progress in Mathematics},
    {\bf 159},
{Birkh\"{a}user Boston, Inc., Boston, MA},
   {1998}.
   	


\bibitem{thangavelu2009}
 {S. Thangavelu},
 {Harmonic analysis on {H}eisenberg nil-manifolds},
\emph{Rev. Un. Mat. Argentina},
{\bf 50},
{2009},
 No {2},
 pp {75--93}.


\bibitem{vanErp}
 {E. van Erp},
 {The {A}tiyah-{S}inger index formula for subelliptic operators
              on contact manifolds. {P}arts I \& {II}},
\emph {Ann. of Math. (2)},
\textbf {171},
  {2010},
 No {3},
  pp {1647--1681} \& {1683--1706}.
  
  
  \bibitem{vanErp+Y}
 {E. van Erp and R. Yuncken},
 {A groupoid approach to pseudodifferential calculi},
\emph {J. Reine Angew. Math.},
\textbf {756},
 {2019},
pp {151--182}.

		
		
 \end{thebibliography}
\end{document}